\newtheorem{theorem}{Theorem}
\newtheorem{proposition}[theorem]{Proposition}
\newtheorem{corollary}{Corollary}[theorem]
\newtheorem{lemme}[theorem]{Lemma}
\theoremstyle{definition}
\newtheorem{definition}[theorem]{Definition}
\newtheorem{th-definition}[theorem]{Theorem-Definition}
\newtheorem{cor-definition}[theorem]{Corollary-Definition}
\numberwithin{theorem}{section}
\numberwithin{equation}{section}
\begin{document}
\title{\center{\textbf{\large{ON THE REALISABILITY OF SMALE ORDERS}}}}
\author{\normalsize{I. IAKOVOGLOU}}
\date{}
\maketitle
\begin{abstract}
In this paper we're considering the question of whether or not a partial order on a finite set is realisable as a Smale order of a structurally stable diffeomorphism or flow acting on a closed manifold. In the following pages we classify the orders that are realisable by 1) an $\Omega$-stable diffeomorphism acting on a closed surface 2) an Anosov flow on a closed 3-manifold 3) a stable diffeomorphism with trivial attractors and repellers acting on a closed surface
\end{abstract}

\section{Introduction and formulation of the results}
\par{Is every partial order on a finite set realisable as a Smale order associated to a structurally stable diffeomorphism or flow acting on a closed manifold? The motivation for this question comes from a similar question asked by S.Smale in \cite{Smale} (problem 6.6) for $\Omega$-stable systems, that to our knowledge remained until now unanswered. Our goal in this article, is to answer Smale's question and to generalise it for structurally stable systems in dimension 2 and 3. By doing so, we hope to make more clear the abundance of structurally stable systems in dimension 2 with a prescribed Smale order and at the same time the topological restrictions that the dimension imposes on the Smale orders.} 
\par{In 1967, during a period when the problem of classification of $C^1$ dynamical systems was undergoing wide  research, S.Smale defined in \cite{Smale} a notion of order between hyperbolic \emph{basic pieces}, that was later known as \emph{Smale's order}. More specifically, consider $M$ a closed manifold and $f$ a diffeomorphism acting on $M$ satisfying the \emph{Axiom A}. The non-wandering set of such a diffeomorphism can be represented as a finite union of disjoint closed invariant sets $\Omega_1, . . . ,\Omega_k$, called basic pieces, each of which contains a dense orbit. From each $\Omega_i$ emanates a \emph{stable} and \emph{unstable lamination}, $W^s(\Omega_i)$ and $W^u(\Omega_i)$. We'll write $\Omega_i \rightarrow \Omega_j$ if $W^u(\Omega_i) \cap W^s(\Omega_j) \neq \emptyset$. If furthermore for every $\Omega_i,\Omega_j$ such that $\Omega_i \rightarrow \Omega_j$ there exist $p \in \Omega_i$ and $q \in \Omega_j$ two periodic points such that $W^u(p)$ and $W^s(q)$ intersect transversally at least at one point (this condition is known as \emph{Axiom B}), the relation $\rightarrow$ defines a partial order, which is called the Smale order of $f$.}
\par{Once he defined this notion of order, Smale naturally asked the question of whether all partial orders on finite sets can be realised as Smale orders of Axiom A and Axiom B systems. The answer to Smale's question is affirmative and the proof of this fact surprisingly enough relies essentially on the notion of \emph{DA map}. We therefore begin this article by proving the following (perhaps nowadays folklore) theorem:}
\begin{theorem}[Smale's question]\label{Smalequestion}
Any partial order on a finite set is realisable as a Smale order of an $\Omega$-stable diffeomorphism acting on a closed surface.
\end{theorem}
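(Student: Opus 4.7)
My plan is to realise the given partial order $(P,\leq)$ by associating to each of its elements a hyperbolic basic piece on a closed surface, and then gluing these local models together so that the pattern of heteroclinic intersections matches the Hasse diagram of $P$. The crucial ingredient, as foreshadowed in the introduction, is the DA construction applied to a linear Anosov diffeomorphism of $T^2$: by locally modifying the Anosov map at a fixed point one obtains an Axiom A system whose non-wandering set decomposes into an isolated source together with a one-dimensional hyperbolic attractor, supplying a local model with a stable lamination admitting infinitely many leaves. This is what will allow basic pieces of arbitrary in- and out-degree in the Hasse diagram, which is impossible with single saddles alone.

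First I would assemble a library of local models on a disk: a trivial sink, a trivial source, a saddle fixed point, a one-dimensional DA attractor, its dual DA repeller, and, for intermediate elements of $P$ of high degree, a Smale horseshoe, whose stable and unstable laminations provide an arbitrary prescribed number of accessible separatrices. Next I would embed the Hasse diagram $H$ of $(P,\leq)$ as a graph in a closed orientable surface $\Sigma$ of sufficient genus, and paste at each vertex $v$ of $H$ the appropriate local model (sink or DA attractor if $v$ is maximal, source or DA repeller if $v$ is minimal, saddle or horseshoe otherwise, matching the number of accessible separatrices to the degree of $v$). Along each edge $v\prec w$ of $H$, I arrange the outgoing unstable separatrix of the model at $v$ to meet the incoming stable separatrix of the model at $w$ transversally in a single heteroclinic orbit; outside a ribbon neighbourhood of $H$ the dynamics is chosen to have no non-wandering points.

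The main obstacle is to guarantee that the Smale order obtained agrees exactly with $\leq$ and not with some strictly larger order. Relations produced by transitivity along chains in $H$ are automatic by the \emph{$\lambda$-lemma}; the delicate point is to preclude accidental heteroclinic intersections between basic pieces attached to incomparable elements of $P$. I would enforce this by carrying out each edge-gluing inside pairwise disjoint tubular neighbourhoods of the edges of $H$ and by verifying locally that every separatrix issued at a vertex terminates only at its prescribed neighbour. Once this is done, the resulting diffeomorphism is Axiom A, has only transverse heteroclinic intersections, and its Smale order is cycle-free, so the classical $\Omega$-stability theorem ensures that it is $\Omega$-stable and that its Smale order is precisely $(P,\leq)$ by construction.
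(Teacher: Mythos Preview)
Your strategy---local hyperbolic models glued along a combinatorial pattern encoding the order, with the DA map as the key tool---is the same as the paper's. But two steps in your assembly are not justified and, as written, do not work.

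First, a standard Smale horseshoe does \emph{not} supply an arbitrary number of free (``accessible'') separatrices: it has only finitely many $s$- and $u$-boundary periodic points, hence a fixed small number of free separatrices on each side. To obtain a transitive saddle-type piece that can be connected to $n$ ancestors and $m$ children for arbitrary $n,m$, the paper starts from a linear Anosov map on $\mathbb{T}^2$ and performs $n$ repelling and $m$ attracting DA blow-ups at distinct fixed points, obtaining a transitive hyperbolic \emph{plug}: a compact surface with boundary carrying exactly $n$ entry circles and $m$ exit circles, each meeting the invariant lamination.

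Second, routing ``the outgoing unstable separatrix'' along an edge of an embedded Hasse graph is the Morse--Smale picture; for a non-trivial piece what leaves through a given exit is not a single curve but a Cantor lamination, and the gluing must place this entire lamination transverse to the incoming one of the next piece. The paper achieves this by gluing plugs along fundamental annuli via diffeomorphisms of tori that put two $C^{1,0}$ laminations of empty interior in general position and force one prescribed leaf of each to meet. This framework also removes your ``outside a ribbon neighbourhood of $H$'' region entirely: the plugs tile the closed surface, so there are no leftover faces which would otherwise force extra sinks or sources not present in $P$. A further subtlety you omit is that when a non-trivial attractor or repeller is glued on, a fully transverse gluing is impossible because its lamination has non-empty interior; the paper explicitly settles for a single transverse heteroclinic intersection (Axiom~B) at those boundaries, which is exactly what $\Omega$-stability---but not structural stability---demands. (A minor point: with the convention $\Omega_i\to\Omega_j\Leftrightarrow W^u(\Omega_i)\cap W^s(\Omega_j)\neq\emptyset$, maximal elements are repellers, not sinks.)
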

\par{A priori, during a period when non transitive Anosov flows were yet to be constructed, the question of realising any partial order as a Smale order of an Anosov flow was indeed a very difficult one. Today, thanks to the tools for constructing Anosov flows developed in \cite{Yu}, the question does no longer seem out of reach and gives a second (and not very difficult) example of hyperbolic system realising every possible Smale order.}
\begin{theorem}\label{Anosovcase}
Any partial order on a finite set is realisable as a Smale order of an Anosov flow acting on a closed 3-manifold.
\end{theorem}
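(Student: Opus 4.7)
The plan is to apply the hyperbolic-plug construction of \cite{Yu}, which builds Anosov flows on closed 3-manifolds by gluing together ``plugs'' along torus boundary components. Given a finite poset $(P,\leq)$, I first encode it in a directed acyclic graph $G$ whose vertex set is $P$ and whose oriented edges $\alpha\to\beta$ correspond to the covering relations $\alpha\gtrdot\beta$. To each vertex $\alpha$ I attach a \emph{hyperbolic plug} $U_\alpha$: a compact 3-manifold with boundary transverse to a partially hyperbolic flow whose maximal invariant set is a single transitive saddle hyperbolic set $\Lambda_\alpha$. The boundary $\partial U_\alpha$ is arranged to consist of one \emph{entrance torus} per incoming edge of $\alpha$ and one \emph{exit torus} per outgoing edge. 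Plugs with prescribed numbers of boundary components can be produced from a classical transitive Anosov flow by iteratively performing DA-type surgeries on periodic orbits and blowing the resulting attracting/repelling orbits up into boundary tori.

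Next, glue the plugs following $G$: for each edge $\alpha\to\beta$, identify an exit torus of $U_\alpha$ with an entrance torus of $U_\beta$ via a diffeomorphism $\varphi_{\alpha\beta}$. The result is a closed 3-manifold $M=\bigcup_\alpha U_\alpha$ carrying a flow, and the main theorem of \cite{Yu} ensures that this flow is genuinely Anosov provided that on each glued torus the pulled-back trace of $W^u(\Lambda_\alpha)$ is strongly transverse to the trace of $W^s(\Lambda_\beta)$. This transversality is an open condition, and post-composing every $\varphi_{\alpha\beta}$ with a sufficiently high power of a Dehn twist along the gluing torus always makes it hold. Performing all these adjustments simultaneously on the whole graph $G$ is the main technical step --- and the main potential obstacle --- but it is by now a standard use of the machinery of \cite{Yu}.

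It remains to check that the Smale order of the resulting Anosov flow coincides with $(P,\leq)$. By construction the basic pieces are precisely the $\Lambda_\alpha$, and the transversality built into every gluing yields transverse intersections $W^u(\Lambda_\alpha)\pitchfork W^s(\Lambda_\beta)\neq\emptyset$ whenever $\alpha\to\beta$ is an edge of $G$; in particular Axiom~B holds and $\Lambda_\alpha\to\Lambda_\beta$ in the Smale sense. Transitivity of the Smale relation, a consequence of Axiom~B via the $\lambda$-lemma, then gives $\Lambda_\alpha\to\Lambda_\gamma$ as soon as $\alpha\geq\gamma$ in $P$. Conversely, an orbit can leave a plug only through one of its exit tori, so an unstable leaf of $\Lambda_\alpha$ can meet a stable leaf of $\Lambda_\gamma$ only when $\gamma$ is reachable from $\alpha$ in $G$, i.e.\ when $\alpha\geq\gamma$ in $P$. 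Together these two inclusions identify the Smale order of the constructed flow with $(P,\leq)$, which finishes the argument.
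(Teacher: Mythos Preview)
Your proposal is correct and follows essentially the same route as the paper: build a transitive hyperbolic plug for each element of the poset by performing DA surgeries on a transitive Anosov flow, then glue the plugs along the Hasse diagram using the B\'eguin--Bonatti--Yu machinery so that the result on the closed manifold is Anosov. The only cosmetic difference is that the paper phrases the gluing condition via the MS-filling property of the DA boundary laminations (Proposition~7.1 and the corollary following it), whereas you invoke strong transversality obtained through Dehn twists; both are standard equivalent entry points into the same theorem of \cite{Yu}.
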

\par{The main goal of this article is to address the question of whether every partial order can be realised as a Smale order of a structurally stable diffeomorphism in dimension 2. We soon distinguished two classes of stable diffeomorphisms, whose Smale orders had very different properties:}
\begin{enumerate}
\item[•]The stable diffeomorphisms with only trivial \emph{attractors} and \emph{repellers} (i.e. the maximal and minimal elements of the Smale order are periodic points)
\item[•]The stable diffeomorphisms with at least one non-trivial attractor or repeller 
\end{enumerate}  
\par{In the first case, we give a necessary and sufficient condition for a partial order to be realised, whereas the second case involves some more technical conditions that are going to be discussed in section 6. When all attractors and repellers are trivial, around every sink $p$, we can see the unstable manifolds of every saddle-type piece related to $p$ by the Smale order appear in a cyclic order. Furthermore, if $W^u(s_2)$ follows $W^u(s_1)$ in this cyclic order, then between the two lies the unstable manifold of a repelling point $a$ related to both $s_2$ and $s_1$. The existence of such a cycle translates to a necessary condition on the Smale order that is not always guaranteed, and will be called the \emph{connectivity condition}, which is going to be defined in section 5 in more detail. The connectivity condition narrows down the possible orders arising from stable diffeomorphisms with trivial attractors and repellers in dimension 2, but it is the only obstacle to the realisability of a partial order.} 
\begin{theorem}\label{Stablecase}
A partial order on a finite set is realisable as a Smale order of a structurally stable diffeomorphism with trivial attractors and repellers acting on a closed (not necessarily connected) surface if and only if the order satisfies the connectivity condition.
\end{theorem}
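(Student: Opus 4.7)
The proof splits into the two implications of the equivalence. The necessity direction (a realisable order satisfies the connectivity condition) is a topological observation about the arrangement of invariant manifolds around a sink. The sufficiency direction (any order satisfying the condition is realisable) is a construction, which I expect to be the main difficulty.

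For necessity, I would fix a structurally stable diffeomorphism $f$ on a closed surface $S$ with trivial attractors and repellers realising the order, and fix a sink $p$. Because $p$ is a hyperbolic periodic sink with trivial attractor, its basin $W^s(p)$ is an open topological disk in $S$, which can be compactified by a circle at infinity (for instance via its prime-end compactification). This circle carries a natural cyclic order, and the unstable manifolds of basic pieces $\Omega_i$ with $\Omega_i \rightarrow p$ accumulate on $\partial W^s(p)$ along well-defined arcs, giving a cyclic order on the saddles related to $p$. Between two consecutive saddle arcs, the points of $\partial W^s(p)$ must belong to some other basic piece; since all repellers are trivial, such a piece can only be a periodic source $a$, whose unstable manifold then meets $W^s(p)$ transversely in the arc in between. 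Because both adjacent saddles $s_1,s_2$ have stable manifolds that a small perturbation of $W^u(a)$ must cross transversely (this is where Axiom B and local linearisation near $a$ are used), we obtain $a \rightarrow s_1$ and $a \rightarrow s_2$, which is exactly the connectivity condition.

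For sufficiency, given a partial order satisfying the connectivity condition, I would first encode it as a combinatorial surface. For each sink $p$, the cyclic order of saddles $s\leq p$ alternates with the sources provided by the connectivity condition; I would build a closed disk $D_p$ whose boundary is cut into alternating arcs labelled by these saddles and sources. Dually, for each source $a$ I would build a disk $D_a$ with boundary arcs labelled by the saddles $s\geq a$ and the sinks linking them via the connectivity condition (applied to the dual order). Gluing, for each pair of arcs labelled by the same saddle, the corresponding disks along thin rectangles (one per arc pairing) produces a possibly disconnected closed surface $\Sigma$. On $\Sigma$ I would put a Morse--Smale vector field (or time-one map) having a sink at the centre of each $D_p$, a source at the centre of each $D_a$, and a saddle in each connecting rectangle, with separatrices arranged by construction so that the resulting Smale order realises all the immediate sink/saddle/source relations. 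To realise relations involving saddles above saddles, I would then insert further saddles inside existing rectangles by local DA-type surgeries, exactly as in the proof of Theorem~\ref{Smalequestion}, with the relations controlled by choosing which horseshoes are inserted into which rectangles.

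The main obstacle, as in most realisation problems, is the sufficiency part, and within it, the step of propagating the purely local connectivity condition (stated at sinks) into a global combinatorial recipe that can be fed to the surgery. One must check that the disks $D_p,D_a$ can be glued without creating unintended intersections of separatrices, and that the subsequent insertion of intermediate saddles by DA-type surgery produces \emph{exactly} the prescribed transverse intersections and no accidental ones. Keeping track of this while allowing the order to have arbitrary depth is where the bulk of the technical work is expected to lie; once this bookkeeping is done, structural stability of the resulting diffeomorphism follows from the standard verification of Axioms A, B and the strong transversality condition on the local models used.
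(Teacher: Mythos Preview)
Your necessity sketch is in the right spirit, though the paper implements it differently: rather than a prime-end compactification it works with the \emph{free unstable separatrices} landing in the basin of $p$ (finitely many, by the theory of $s$-boundaries), proves that every saddle piece above $p$ dominates one contributing such a separatrix, and then shows that two cyclically adjacent free separatrices always share a repelling ancestor. Your version would need more care at the step ``between two consecutive saddle arcs \ldots\ such a piece can only be a periodic source'', since the accessible boundary of the basin is made of stable manifolds of saddle points, not of sources.

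The sufficiency direction has a genuine gap. Your base construction is Morse--Smale: you place ``a saddle in each connecting rectangle''. But a periodic saddle on a surface has exactly two stable and two unstable separatrices, hence can lie below at most two maximal elements and above at most two minimal ones. A general non-extremal element $s$ of the order may be related to arbitrarily many sources and sinks, so it \emph{cannot} be represented by a single periodic saddle; and if you instead use one physical saddle per rectangle, a single abstract $s$ becomes several basic pieces and the realised Smale order has strictly more elements than $(F,<)$. The paper's construction avoids this by modelling each saddle-type element with a \emph{non-trivial transitive} hyperbolic piece, obtained by applying DA maps to a pseudo-Anosov homeomorphism; the extended domain of such a piece can have any prescribed number of boundary components of prescribed even lengths (Lemma~\ref{creation}, up to mild arithmetic constraints that are then removed by local moves on the cycles), which is exactly what lets one piece meet many sources and sinks. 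A second point absent from your outline is the compatibility between the cycles chosen at sinks and at sources: one must arrange that for every $(s_k,s_l,\alpha,\omega)$ the number of transitions $s_k\!\to\! s_l$ labelled $\alpha$ in $\omega$'s cycle equals the number labelled $\omega$ in $\alpha$'s cycle (condition~$(*)$ and Lemma~\ref{exists equilibrium}); without this balancing the disks $D_p$ and $D_a$ cannot be glued into a closed surface at all.
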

\par{In the proof of this theorem we show how to algorithmically construct for any partial order satisfying the connectivity condition an infinite number of distinct conjugation classes of stable diffeomorphisms realising this order. Generalising this theorem to diffeomorphisms with non-trivial attractors is indeed a difficult task. We will speak briefly about this in section 6, but the question still remains open. Before going on with the proofs of the previous theorems, I would like to thank Christian Bonatti for his most useful guidance and intuition, without which this work wouldn't have been completed.}   

\section{Preliminaries}
\subsection{Hyperbolic dynamics}
Let us remind very briefly in this section some useful definitions and results in hyperbolic dynamics. For a more detailed introduction, we refer the reader to some well known references \cite{Smale} or \cite{Bowen} or \cite{Katok}. We fix for the rest of this section $M$ to be a compact smooth manifold and $f \in C^r$ ($r \geq 1$) a diffeomorphism of $M$.
 
\begin{definition}\label{definitionhyperbolicset}
We say that an invariant compact subset $\Lambda \subset M$ is \emph{hyperbolic} if there exist $E^u, E^s$ two continuous invariant subbundles of the restriction of $TM$ on $\Lambda$ such that for every $x\in \Lambda$
\begin{enumerate}
\item $E_{x}^u \oplus E_{x}^s = T_{x}M$
\item For every Riemannian metric $\Vert ~ \parallel $ on $M$, there exist $\lambda, \mu \in \mathbb{R}$ verifying $0<\lambda<1<\mu$ and $C>0$ uniform in $x$ such that $\Vert d_{x}f^n(v) \Vert \leq C\lambda^n \Vert v \Vert$ and $\Vert d_{x}f^{-n}(w) \Vert \leq C \mu^{-n} \Vert w \Vert$ for every $v \in E^s(x)$, $w \in E^u(x)$ and $n \geq 0$.   
\end{enumerate} 
Furthermore, we say that $\Lambda$ is \emph{locally maximal} when there exists $U$ an open neighbourhood of $\Lambda$ such that $\underset{n\in \mathbb{Z}}{\cap}f^n(U)=\Lambda$. 
\end{definition}
\begin{definition}
We say that $f$ satisfies the \emph{Axiom A} if its non-wandering set $\Omega(f)$ is hyperbolic and equal to the closure of the set of periodic points of $f$.  
\end{definition}
\begin{theorem}[Generalized stable manifold theorem]
If $\Lambda$ is a hyperbolic set for $f$ and $d$ the distance induced by some Riemannian metric on $M$, then for every $x$ there exists an injective $C^r$ immersion $J_x^s: E^s(x) \rightarrow M$ (where $E^{s}(x)$ was defined in \ref{definitionhyperbolicset}), whose image is called the stable manifold, such that
\begin{enumerate}
\item $W^s(x):= J_x^s(E^s(x))=\lbrace y \in M| d(f^n(y),f^n(x)) \underset{n \rightarrow +\infty}{\longrightarrow} 0 \rbrace$
\item $f(W^{s}(x))=W^{s}(f(x))$ and $d_0 J_x^s(E^s(x))=E^s(x)$
\item If $x,y \in \Lambda$ are close then $W^s(x)$, $W^s(y)$ are $C^1$ close on compact sets. 
\end{enumerate}
By considering instead of $f$ its inverse, we can define in the same way the unstable manifold $W^u(x)$ for every $x \in \Lambda$.
\end{theorem}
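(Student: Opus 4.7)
The plan is to construct the local stable manifolds first and then globalize by iteration, using the classical graph transform approach (à la Hadamard-Perron). The main obstacle is not the construction itself but handling the lack of invariance of the splitting $E^u \oplus E^s$ outside $\Lambda$; the bundle is only defined on the hyperbolic set, so one cannot naively apply a fixed point argument in a single linearized chart.

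First I would pass to an adapted Riemannian metric in which the constant $C$ in Definition \ref{definitionhyperbolicset} can be taken equal to $1$, so that $df$ is a genuine contraction on $E^s$ and a genuine expansion on $E^u$. Then, for each $x \in \Lambda$, I would use the exponential map $\exp_x$ to pull back $f$ to a map $F_x : T_xM \supset B_x \to T_{f(x)}M$, $F_x = \exp_{f(x)}^{-1} \circ f \circ \exp_x$, defined on a small ball $B_x$. Since $d_0 F_x = d_x f$ respects the splitting $E^s(x) \oplus E^u(x) \to E^s(f(x)) \oplus E^u(f(x))$ and is hyperbolic, for $B_x$ small enough the nonlinear maps $F_x$ form a family of hyperbolic maps between the tangent spaces along the orbit of $x$, with uniform $C^1$-control on the nonlinear remainder.

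Next I would run the graph transform. In each tangent space $T_xM$, I consider the space $\Gamma_x$ of Lipschitz graphs $\sigma : B^s_x \to E^u(x)$ over the stable direction with small Lipschitz constant, and define the transform $\mathcal{T}$ sending a family $(\sigma_{f(x)})_{x}$ to the family whose graph over $B^s_x$ is the connected component of $F_x^{-1}(\text{graph}(\sigma_{f(x)}))$ passing through $0$. Using hyperbolicity, $\mathcal{T}$ is a contraction on the Banach space of such families for the sup norm, hence has a unique fixed point $(\sigma_x^*)$. The graph of $\sigma_x^*$ pushed to $M$ by $\exp_x$ is the local stable manifold $W^s_{\text{loc}}(x)$, and by construction $f(W^s_{\text{loc}}(x)) \subset W^s_{\text{loc}}(f(x))$. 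A standard bootstrap (iterating the graph transform on successively higher order jets, which remain well-defined thanks to the uniform domination between the contraction on $E^s$ and the derivatives $df^{-n}$ on $E^u$) upgrades $\sigma_x^*$ to class $C^r$; this is the technical heart of the argument and the main obstacle. Continuous $C^1$ dependence in $x$ follows from the fact that the fixed point of a parametrized contraction depends continuously on the parameter, together with the continuity of the splitting $E^s \oplus E^u$ on $\Lambda$, giving assertion $(3)$ and also $d_0 J_x^s(E^s(x)) = E^s(x)$.

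Finally, to identify $W^s_{\text{loc}}(x)$ with the set of points whose forward orbit shadows that of $x$, I would show both inclusions: any $y$ in the graph of $\sigma_x^*$ satisfies $F_{f^{n-1}(x)} \circ \cdots \circ F_x(y - x) \in \text{graph}(\sigma_{f^n(x)}^*)$, and the contraction on the stable direction gives $d(f^n(y), f^n(x)) \to 0$ exponentially; conversely, any $y$ whose forward orbit stays close to that of $x$ must, by the hyperbolic splitting and the invariance of the cone fields around $E^s$ and $E^u$, have its projection onto $E^u(x)$ killed by taking preimages, forcing $y \in W^s_{\text{loc}}(x)$. Globalizing via
\[
W^s(x) := \bigcup_{n \geq 0} f^{-n}\bigl(W^s_{\text{loc}}(f^n(x))\bigr)
\]
gives the injectively immersed $C^r$ manifold of the statement, injectivity coming from the fact that $f^{-n}$ is a diffeomorphism and local stable manifolds are embedded, and the defining characterization in $(1)$ being inherited from the local version.
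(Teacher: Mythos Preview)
Your outline is a faithful sketch of the classical Hadamard--Perron graph transform proof and contains no serious gap; the adapted metric, the fiberwise contraction on families of Lipschitz graphs, the $C^r$ bootstrap, and the globalization by backward iteration are exactly the standard ingredients. However, the paper does not prove this theorem at all: it is stated in the preliminaries section as a known result, with the reader referred to \cite{Smale}, \cite{Bowen}, or \cite{Katok} for details. So there is nothing to compare your approach against in the paper itself --- you have simply supplied what the paper deliberately omits.
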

\begin{theorem}[Smale's decomposition] 
If $f$ satisfies the Axiom A, then $\Omega(f)$ can be decomposed in a finite union of closed invariant disjoint sets $\Omega_i$, which we'll call basic pieces, on which $f$ acts transitively. Furthermore, every $\Omega_i$ can be decomposed in a finite union of closed disjoint sets $X_{i,1},...,X_{i,n(i)}$ on which $f^{n(i)}$ is mixing and $f(X_{i,j})=X_{i,(j+1)}$ ($X_{i,(n(i)+1)}=X_{i,1}$)
\end{theorem}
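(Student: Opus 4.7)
The plan is to follow Smale's original spectral decomposition argument, exploiting the density of periodic points guaranteed by Axiom A to carve $\Omega(f)$ into dynamical blocks.

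First, on the set of periodic points of $f$ I would introduce the equivalence relation $p \sim q$ defined by requiring both $W^u(p) \cap W^s(q)$ and $W^u(q) \cap W^s(p)$ to be non-empty. By the hyperbolicity of $\Omega(f)$ and the local product structure it induces on a neighborhood of $\Omega(f)$, any such intersection point itself lies in $\Omega(f)$ and the intersection is automatically transverse. Reflexivity and symmetry of $\sim$ are immediate; transitivity is the first real step and relies on the inclination lemma (also called the $\lambda$-lemma): if $W^u(p)$ meets $W^s(q)$ transversally and $W^u(q)$ meets $W^s(r)$ transversally, then iterating a disk inside the first intersection by large positive powers of $f$ produces disks that $C^1$-accumulate on $W^u(q)$ and hence meet $W^s(r)$ transversally; the symmetric argument closes the loop.

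Next, I would set $\Omega_i := \overline{[p_i]}$ for the equivalence classes $[p_i]$. Finiteness of the number of classes is the step to keep track of: each class is covered by stable/unstable product neighborhoods of uniform size coming from the hyperbolic structure of $\Omega(f)$, so compactness of $\Omega(f)$ leaves only finitely many classes. Each $\Omega_i$ is closed by definition and $f$-invariant because $\sim$ is preserved by $f$; disjointness follows because two classes with points inside a common local product box would be identified by the inclination lemma; and the union $\bigcup_i \Omega_i$ equals $\Omega(f)$ by density of periodic points. Transitivity of $f$ on each $\Omega_i$ is then obtained by the same method: given open sets $U,V \subset \Omega_i$, pick periodic points $p \in U$, $q \in V$ (which must lie in the same class) and push a small disk in $U$ through a transverse heteroclinic point from $p$ to $q$ using the inclination lemma, so that some iterate lands inside $V$.

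For the cyclic mixing decomposition, fix a basic piece $\Omega_i$ together with a periodic point $p$ in it. On the transitive locally maximal hyperbolic set $\Omega_i$, the periods of all periodic orbits share a greatest common divisor $n(i)$, and the sets $X_{i,1},\dots,X_{i,n(i)}$ can be taken as the closures of the $n(i)$ distinct $f^{n(i)}$-invariant pieces into which the grand orbit of $p$ decomposes, cyclically permuted by $f$. Invoking once more density of periodic points, the local product structure and the inclination lemma, one upgrades the transitivity of $f^{n(i)}$ on each $X_{i,j}$ to topological mixing. The main obstacle throughout is the interplay between the inclination lemma and the local product structure: every step (transitivity of $\sim$, finiteness of the class count, transitivity of each basic piece, and the mixing refinement) ultimately reduces to a careful combination of these two tools.
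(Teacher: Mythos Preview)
The paper does not actually prove this statement: Smale's decomposition theorem is listed in the preliminaries section as a background result, stated without proof and accompanied only by references to \cite{Smale}, \cite{Bowen} and \cite{Katok}. Your proposal is essentially the classical argument found in those references (equivalence classes of periodic points under heteroclinic relation, finiteness via local product structure, transitivity and mixing via the $\lambda$-lemma), so there is nothing in the paper to compare it against; your outline is the standard one and is correct in its broad strokes.
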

\begin{theorem}[Phase theorem]
With the same hypothesis as above, the basic pieces of $f$ are locally maximal. Furthermore, if $\Omega_i$ is such a basic piece  $$W^{s}(\Omega_i):=\lbrace y \in M| d(f^n(y),\Omega_i) \underset{n \rightarrow +\infty}{\longrightarrow} 0 \rbrace = \cup_{x \in \Omega_i} W^s(x)= M$$Same for $W^u(\Omega_i)$.
\end{theorem}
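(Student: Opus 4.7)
The plan is to establish the three assertions packed into this theorem: (a) local maximality of each $\Omega_i$, (b) the two equivalent descriptions of $W^s(\Omega_i)$, and (c) the fact that the stable sets of the basic pieces cover $M$. The unifying tool across all three parts is the \emph{local product structure} on a hyperbolic set: by the Stable Manifold Theorem combined with the transversality of $E^s$ and $E^u$, there exists $\delta>0$ such that for any two points $x,y \in \Omega(f)$ with $d(x,y)<\delta$, the intersection $W^s_{loc}(x) \cap W^u_{loc}(y)$ is a single point, continuous in $(x,y)$ and lying in $\Omega(f)$. I would spend the first step verifying this structure, as it is the technical backbone of the rest.

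For part (a), I would fix a small open neighbourhood $U$ of $\Omega_i$ whose closure is disjoint from all other basic pieces (possible since the $\Omega_j$ are finitely many, closed and pairwise disjoint) and such that $U$ is covered by product boxes $W^s_{loc}(x) \times W^u_{loc}(x)$ for $x \in \Omega_i$. If $z \in \bigcap_{n\in\mathbb{Z}} f^n(U)$, then the full orbit of $z$ is $\delta$-close at each time to a point of $\Omega_i$, so the Shadowing Lemma (applicable because $\Omega_i$ is hyperbolic) produces a true orbit of $\Omega_i$ shadowing the orbit of $z$; local product structure then forces $z$ itself to lie on $\Omega_i$. This gives $\bigcap_n f^n(U)=\Omega_i$.

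For part (b), the inclusion $\bigcup_{x\in \Omega_i} W^s(x) \subseteq \{y : d(f^n(y),\Omega_i)\to 0\}$ is immediate from the definition of $W^s(x)$. For the reverse, if $d(f^n(y),\Omega_i)\to 0$, pick $x_n\in \Omega_i$ with $d(f^n(y),x_n)\to 0$. The Stable Manifold Theorem provides a uniform size $\varepsilon_0>0$ for local stable manifolds of points of $\Omega_i$, so for $n$ large enough $f^n(y)\in W^s_{loc}(x_n)$; applying $f^{-n}$ yields $y\in W^s(f^{-n}(x_n))$, and $f^{-n}(x_n)\in \Omega_i$ by invariance. Part (c) then follows by taking any $y\in M$ and any $p\in\omega(y)$: standard arguments show $\omega(y)\subseteq \Omega(f)$, so $p\in\Omega_i$ for some $i$; for any subsequence $f^{n_k}(y)\to p$, eventually $f^{n_k}(y)\in W^s_{loc}(p)$, which via $f$-invariance of stable manifolds gives $d(f^m(y),f^m(p))\to 0$ and hence $y\in W^s(\Omega_i)$.

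The main obstacle is the rigorous setup of part (a): one has to combine the Stable Manifold Theorem, the local product structure, and the Shadowing Lemma in a coherent way to conclude that an orbit confined to a small neighbourhood of a hyperbolic basic piece must in fact lie on the piece. Once that is in place, parts (b) and (c) amount to bookkeeping with local stable manifolds of uniform size. The cleanest presentation is probably to state the Shadowing Lemma as an external input and immediately deduce local maximality, which then makes the rest of the argument almost mechanical.
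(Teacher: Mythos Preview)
The paper does not actually prove this statement: it sits in the preliminaries section, where the author explicitly refers the reader to \cite{Smale}, \cite{Bowen}, \cite{Katok} for proofs. So there is no ``paper's own proof'' to compare against; your outline is essentially the standard textbook argument, and part~(a) via local product structure plus shadowing/expansivity is fine.

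There is, however, a genuine gap in your parts~(b) and~(c), and it is the same mistake in both places. From $d(f^{n}(y),x_n)<\varepsilon_0$ (or $f^{n_k}(y)\to p$) you conclude $f^{n}(y)\in W^s_{loc}(x_n)$ (resp.\ $f^{n_k}(y)\in W^s_{loc}(p)$). This does not follow: the local stable manifold is a submanifold of dimension $\dim E^s$, not an $\varepsilon_0$-ball, so mere proximity to a point of $\Omega_i$ says nothing about lying on its stable leaf. For~(b) the repair is to use \emph{all} large $n$ at once: once $f^N(y)$ has its entire forward orbit in a small neighbourhood $U$ of $\Omega_i$, project that forward orbit to a one-sided pseudo-orbit in $\Omega_i$, shadow it by some $w\in\Omega_i$, and then the characterisation $W^s_\varepsilon(w)=\{z:d(f^n z,f^n w)\le\varepsilon\ \forall n\ge0\}$ gives $f^N(y)\in W^s_{loc}(w)$. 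For~(c) the situation is worse: knowing only that a \emph{subsequence} $f^{n_k}(y)\to p\in\Omega_i$ does not by itself force the full forward orbit to stay near $\Omega_i$; a priori the orbit could visit several $\Omega_j$. You first need to show that $\omega(y)$ is contained in a single basic piece (for instance via local maximality of each $\Omega_j$ and finiteness of the decomposition, arguing that once the orbit enters a small isolating neighbourhood $U_j$ either it stays forever or it exits along $W^u(\Omega_j)$ and never returns), after which~(c) reduces to~(b).
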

\begin{definition}
A basic piece of $f$ will be called \emph{trivial} if it consists only of a periodic orbit. Otherwise it will be called \emph{non-trivial}.
\end{definition}
\begin{definition}
If $f$ satisfies the Axiom A and $(\Omega_i)_i$ are the basic pieces of $f$ defined as above, we'll write $\Omega_i \rightarrow \Omega_j$ if $W^u(\Omega_i) \cap W^s(\Omega_j) \neq \emptyset$. We'll say in this case, by abuse of language, that $\Omega_i $ is bigger than $\Omega_j$.
\end{definition}
\begin{definition}
If $f$ satisfies the Axiom A, we'll say that it also satisfies the \emph{Axiom B} if for every $\Omega_i, \Omega_j$ such that $\Omega_i \rightarrow \Omega_j$ there exist $p \in \Omega_i, q\in \Omega_j$ periodic points such that $W^u(p)$ and $ W^s(q)$ have at least one point of transverse intersection. Furthermore, we'll say that it satisfies the \emph{strong transversality condition} if for every $x,y \in \Omega(f)$, $W^s(x)$ and $W^u(y)$ intersect transversally at every point of intersection. 
\end{definition}
\begin{definition}
If $f$ satisfies the Axiom A, we say that it also satisfies the \emph{``no cycle" condition} if the relation $\rightarrow$ defined previously doesn't contain any sequence of the form $\Omega_{i_1} \rightarrow \Omega_{i_2}\rightarrow...\rightarrow \Omega_{i_n}=\Omega_{i_1}$.  
\end{definition}
\begin{th-definition}\label{AxiomBnocycles}
If $f$ satisfies the Axiom A and the Axiom B (or less generally the strong transversality condition), then it also satisfies the ``no cycle" condition. Even more, under the same hypothesis, $\rightarrow$ is a partial order and will be called the \emph{Smale order} of $f$. 
\end{th-definition}
\begin{definition}
If $f$ satisfies the Axiom A and the Axiom B, we'll say that a basic piece $\Omega$ of $f$ is a \emph{hyperbolic attractor} if it has an open neighbourhood $U$ such that $\cap_{n \geq 0}f^n(U) = \Omega$. By replacing $f$ by $f^{-1}$ we can define a \emph{hyperbolic repeller}. Any basic piece that is not a hyperbolic attractor or repeller will be called of \emph{saddle type}.
\end{definition}
\begin{proposition}
If $f$ satisfies the Axiom A and the strong transversality condition, then the minimal elements of the order $\rightarrow$ are hyperbolic attractors and the maximal elements of the order $\rightarrow$ are hyperbolic repellers. All the other elements are locally maximal saddle-type pieces.
\end{proposition}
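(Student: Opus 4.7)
The plan is to reduce the whole statement to a single equivalence: a basic piece is a hyperbolic attractor if and only if it is minimal for the relation $\rightarrow$. The repeller/maximal half then follows by applying the same argument to $f^{-1}$, and the assertion about the remaining pieces is then immediate, since every basic piece is locally maximal by the Phase theorem quoted above, and ``saddle type'' is by definition the complement of ``attractor or repeller''.

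The engine of the argument would be the decomposition $M = \bigsqcup_i W^s(\Omega_i)$, together with its symmetric version for unstable manifolds. This rests on the no-cycle property (which by Theorem-Definition \ref{AxiomBnocycles} is implied by strong transversality) plus the existence of filtrations, and yields that the $\omega$-limit of every orbit is a nonempty compact invariant set contained in a single basic piece. Assuming now that $\Omega_i$ is minimal, I would pick any $y \in W^u(\Omega_i)$, find the unique $j$ with $y \in W^s(\Omega_j)$, deduce $\Omega_i \rightarrow \Omega_j$, and conclude $j = i$ by minimality. This gives $W^u(\Omega_i) \subseteq W^s(\Omega_i)$.

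To upgrade this containment to the identity $W^u(\Omega_i) = \Omega_i$, I would iterate $y$ forward until $f^n(y)$ lies in a small neighbourhood $U$ of $\Omega_i$ on which local product structure is available. There, $f^n(y)$ can be written as the intersection of a local unstable plaque through some $x \in \Omega_i$ and a local stable plaque through some $x' \in \Omega_i$; local product structure identifies that intersection with a single point of $\Omega_i$ (using local maximality of the basic piece), so $f^n(y) \in \Omega_i$, whence $y \in \Omega_i$ by invariance. Finally, knowing that all local unstable plaques of points of $\Omega_i$ are contained in $\Omega_i$, I would build a small tubular neighbourhood $U$ of $\Omega_i$ saturated by local stable plaques and check that $f(\overline{U}) \subset U$ with $\bigcap_{n \ge 0} f^n(U) = \Omega_i$, which is precisely what is required for $\Omega_i$ to be a hyperbolic attractor.

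The step I expect to be most delicate to write up cleanly is the identity $W^u(\Omega_i) \cap W^s(\Omega_i) = \Omega_i$. Close to $\Omega_i$ it is a direct application of the local product structure, but propagating it to an arbitrary point of $W^u(\Omega_i) \cap W^s(\Omega_i)$ requires controlling the iteration into the local product region and keeping track of uniform sizes of local stable and unstable plaques. The subsequent passage from $W^u(\Omega_i) = \Omega_i$ to an actual trapping neighbourhood rests on the same kind of uniform estimate, so both delicate points can be addressed in parallel.
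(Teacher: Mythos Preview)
The paper does not actually prove this proposition: it sits in Section~2.1 (Preliminaries), where all results are quoted without proof as standard background from \cite{Smale}, \cite{Bowen}, \cite{Katok}. There is therefore nothing in the paper to compare your argument against.

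That said, your outline is the standard one and is essentially sound, with two remarks.

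First, you set up the equivalence ``$\Omega_i$ is an attractor $\Leftrightarrow$ $\Omega_i$ is $\rightarrow$-minimal'' but only argue the direction minimal $\Rightarrow$ attractor. The converse is needed so that non-extremal pieces are genuinely of saddle type; it is immediate (if $\Omega_i$ is an attractor then $W^u(\Omega_i)=\Omega_i$, so $\Omega_i\rightarrow\Omega_j$ gives $\Omega_i\cap W^s(\Omega_j)\neq\emptyset$ and hence $j=i$), but should be mentioned.

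Second, the step you yourself flag as delicate, $W^u(\Omega_i)\cap W^s(\Omega_i)=\Omega_i$, is not quite handled by the argument you sketch. Iterating $y$ forward puts $f^n(y)$ on a \emph{local stable} plaque of some $x'\in\Omega_i$, but membership in the global $W^u(\Omega_i)$ does not by itself put $f^n(y)$ on a \emph{local unstable} plaque of a point of $\Omega_i$; that would require backward iteration, and there is no single $n$ that does both jobs. The clean fix uses the strong transversality hypothesis directly: $y$ is a transverse intersection of $W^u(x)$ and $W^s(x')$ with $x,x'\in\Omega_i$, so by the $\lambda$-lemma (Birkhoff--Smale) $y$ is accumulated by periodic points and hence $y\in\overline{\mathrm{Per}(f)}=\Omega(f)$; since $y\in W^s(\Omega_i)$ this forces $y\in\Omega_i$. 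With this in place, the rest of your plan (building a trapping neighbourhood from local stable plaques once $W^u(\Omega_i)=\Omega_i$) goes through.
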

For every Axiom A and Axiom B diffeomorphism $f$, there exists a smooth Lyapounov function $\psi:M \rightarrow \mathbb{R^+}$ such that $\psi(f(x))\leq \psi(x)$ for every $x\in M$, where the equality is obtained if and only if $x$ is non-wandering. It's easy to see that if two distinct basic pieces satisfy $\Omega_{j} \rightarrow \Omega_{i}$ then $\psi(\Omega_j)> \psi(\Omega_i)$. By a small perturbation we can assume that for any $x,y \in \Omega(f)$ we have $\psi(x)=\psi(y)$ if and only if $x$ and $y$ belong to the same basic piece. Therefore, every Lyapounov function with the previous properties defines a total order on the basic pieces compatible with the Smale order of $f$ and conversely for every total order compatible with the Smale order of $f$, there exists a Lyapounov function realising this order. A corollary of this fact is the following theorem.  
\begin{theorem}[Filtration theorem]\label{Filtrationtheorem}
Suppose that $f$ satisfies the Axiom A and B. For every total order on the set of basic pieces compatible with the Smale order of $f$, if we label $(\Omega_i)_{i \in \llbracket 1,...,n \rrbracket} $ in an increasing way, there exist compact submanifolds with boundary $M_0,...M_n$ such that
\begin{enumerate}
\item $M_0=\emptyset$ and $M_n=M$  
\item $M_{i-1}\subset int(f^{-1}(M_{i-1})) \subset f^{-1}(M_{i-1}) \subset f(M_i)\subset int(M_i)$
\item $\Omega_i=M_i-\overline{M_{i-1}} \cap \Omega(f)= \cap_{n\in \mathbb{Z}} f^n(M_i-\overline{M_{i-1}})$ 
\end{enumerate}
We will call $M_i-\overline{M_{i-1}}$ a filtrating plug.
\end{theorem}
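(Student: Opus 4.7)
The plan is to build the filtration as sublevel sets of the Lyapunov function $\psi$ discussed just before the statement. Setting $c_i := \psi(\Omega_i)$, the compatibility of $\psi$ with the fixed total order on basic pieces gives $c_1 < c_2 < \cdots < c_n$. By Sard's theorem, choose regular values $a_0 < c_1 < a_1 < c_2 < \cdots < c_n < a_n$ with $a_0$ below and $a_n$ above the range of $\psi$, and set $M_i := \psi^{-1}([0, a_i])$. The regular value theorem then makes each $M_i$ a compact submanifold with smooth boundary, with $M_0 = \emptyset$ and $M_n = M$.

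The trapping inclusions $f(M_i) \subset \text{int}(M_i)$ and $M_{i-1} \subset \text{int}(f^{-1}(M_{i-1}))$ come straight from the defining property of $\psi$: for $x \in M_i \cap \Omega(f)$ we have $\psi(f(x)) = \psi(x) = c_j \leq c_i < a_i$, while for $x \in M_i$ wandering we have $\psi(f(x)) < \psi(x) \leq a_i$; in both cases $f(x) \in \text{int}(M_i)$. The identification $\Omega_i = (M_i \setminus \overline{M_{i-1}}) \cap \Omega(f)$ is forced by the choice of the $a_j$, and the second equality $\bigcap_{n \in \mathbb{Z}} f^n(M_i \setminus \overline{M_{i-1}}) = \Omega_i$ follows from the Phase theorem: by forward invariance and the local maximality of $\Omega_1 \cup \cdots \cup \Omega_i$ inside $M_i$, any $x$ whose entire orbit stays in $M_i \setminus \overline{M_{i-1}}$ must lie in $\Omega_1 \cup \cdots \cup \Omega_i$ and simultaneously avoid $M_{i-1}$, hence sits in $\Omega_i$.

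The delicate point is the middle inclusion $f^{-1}(M_{i-1}) \subset f(M_i)$, equivalently $f^{-2}(M_{i-1}) \subset M_i$. Sublevel sets of an arbitrary Lyapunov function do not automatically satisfy this, since on wandering orbits $\psi$ can drop sharply across two iterates, so the compact set $f^{-2}(M_{i-1})$ may contain points whose $\psi$-value exceeds $c_{i+1}$. I would circumvent this by constructing the $M_i$ inductively rather than as naive sublevel sets: start with a tight trapping neighbourhood of the attractor $\Omega_1$, and at each step enlarge $M_{i-1}$ to absorb $f^{-1}(M_{i-1})$ together with a small isolating neighbourhood of $\Omega_i$, then smooth the boundary so as to remain a compact submanifold with boundary. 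The no-cycle condition of Theorem-Definition~\ref{AxiomBnocycles} guarantees that $f^{-2}(M_{i-1})$ is disjoint from $\Omega_j$ for $j \geq i$, so the enlargement can be performed while keeping $M_i$ disjoint from $\Omega_{i+1}, \ldots, \Omega_n$ and preserving the trapping property. This inductive thickening is the main obstacle and is the step requiring the most care.
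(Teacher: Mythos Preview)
The paper does not give a detailed proof of this theorem: it is stated in the preliminaries as a known result, with the single remark that it is ``a corollary'' of the existence of a smooth Lyapunov function $\psi$ separating the basic pieces and realising the given total order. Your approach via sublevel sets $M_i=\psi^{-1}([0,a_i])$ is therefore exactly the route the paper indicates, and your verification of items 1 and 3 and of the outer inclusions in item 2 is correct.

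Your flagging of the middle inclusion $f^{-1}(M_{i-1})\subset f(M_i)$ as the delicate point is well taken and in fact goes beyond what the paper says; naive sublevel sets of an arbitrary Lyapunov function need not satisfy it, since the one-step drop $\psi-\psi\circ f$ is only bounded above on $M$, not on each interlevel band separately. Your proposed inductive-thickening fix is one standard way to handle this; an alternative that stays closer to the sublevel-set picture is to first reparametrise $\psi$ by a suitable increasing diffeomorphism of $\mathbb{R}$ so that the gaps $c_i-c_{i-1}$ exceed twice the (global) maximal one-step drop of the new function, after which the choice $a_{i-1}<c_i<a_i$ with $a_i-a_{i-1}$ large enough forces $f^{-2}(M_{i-1})\subset M_i$ directly. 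Either route completes the argument the paper leaves implicit.
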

Finally, before moving on to the next section, let us remind two well-known theorems in hyperbolic theory. 
\begin{definition}
We say that $f$ is \emph{$\Omega$-stable} if for any $g \in \text{\text{Diff}}^1(M)$ close enough to $f$ there exists $h: \Omega(f)\rightarrow \Omega(g)$ homeomorphism such that $ h \circ f = g \circ h$ 
\end{definition}
\begin{definition}
We say that $f$ is \emph{structurally stable} if for any $g \in \text{Diff}^1(M)$ close enough to $f$ there exists $h$ homeomorphism of $M$ close to the identity such that $ h \circ f = g \circ h$.
\end{definition}
\begin{theorem}[Palis, Smale]\label{Omegastabilitytheorem}
$f$ is $\Omega$-stable if and only if $f$ satisfies the Axiom A and the ``no-cycle" condition.
\end{theorem}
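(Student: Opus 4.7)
The plan is to prove the two implications separately, the "if" direction being essentially constructive while the "only if" direction requires much heavier perturbative machinery.

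For the direction ($\Leftarrow$), I would assume Axiom A together with the no-cycle condition and start by invoking the Filtration Theorem \ref{Filtrationtheorem} to produce compact submanifolds with boundary $M_0 \subset \cdots \subset M_n = M$ satisfying the strict inclusions $M_{i-1} \subset \mathrm{int}(f^{-1}(M_{i-1}))$, so that each basic piece $\Omega_i$ is the maximal invariant set in the plug $V_i := M_i \setminus \overline{M_{i-1}}$. Since strictness is an open condition in the $C^1$ topology, the same chain of inclusions will persist for every $g$ sufficiently $C^1$-close to $f$, forcing $\Omega(g) \subset \bigcup_i \Lambda_i(g)$ with $\Lambda_i(g) := \bigcap_{n \in \mathbb{Z}} g^n(V_i)$. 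Each $\Omega_i$ being a locally maximal hyperbolic set, the classical structural stability of locally maximal hyperbolic sets (via shadowing and expansivity) will yield a conjugating homeomorphism $h_i : \Omega_i \to \Lambda_i(g)$ between $f|_{\Omega_i}$ and $g|_{\Lambda_i(g)}$, while shadowing will also ensure $\Lambda_i(g) \subset \Omega(g)$. Gluing the $h_i$ then produces the desired homeomorphism $h : \Omega(f) \to \Omega(g)$.

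For the direction ($\Rightarrow$), I would assume $f$ is $\Omega$-stable. The no-cycle half should be the easier one: if a cycle $\Omega_{i_1} \to \cdots \to \Omega_{i_k} = \Omega_{i_1}$ existed, an arbitrarily small $C^1$ perturbation along a heteroclinic chain in the cycle would create new periodic orbits outside $\Omega(f)$, incompatible with the existence of a topological conjugation between $\Omega(f)$ and $\Omega(g)$. For Axiom A, one first uses Pugh's $C^1$-closing lemma to obtain $\Omega(g) = \overline{\mathrm{Per}(g)}$ for generic $g$ and transports this equality to $f$ itself through the $\Omega$-conjugation. The genuinely hard step, due to Mañé, will be to show that $\overline{\mathrm{Per}(f)}$ is hyperbolic: otherwise Franks'-lemma-type perturbations at non-hyperbolic periodic points would create sinks, sources, or bifurcating orbits, enlarging the non-wandering set of the perturbation and contradicting $\Omega$-stability.

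The main obstacle will be precisely this last step. Unlike the forward direction, which relies only on robust tools (filtrations, shadowing, local stability of hyperbolic sets), the converse demands fine control of linear cocycles along orbit segments together with selective $C^1$ perturbations — the analytic core of Mañé's stability theory, whose complete execution spans several substantial papers. Given the scope of the present article, I would in practice invoke Theorem \ref{Omegastabilitytheorem} as a black box and refer the reader to the works of Palis, Smale, and Mañé for the full argument.
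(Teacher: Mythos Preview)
The paper does not actually prove this theorem: it is stated in Section~2.1 (Preliminaries) as a classical background result attributed to Palis and Smale, with no accompanying proof or sketch. Your final paragraph anticipates this correctly --- the article invokes Theorem~\ref{Omegastabilitytheorem} as a black box and relies on the existing literature for the argument.

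Your outline of the standard proof is accurate as a roadmap. The forward direction via filtrations and local structural stability of isolated hyperbolic sets is exactly the Palis--Smale argument, and your sketch of the converse (closing lemma for density of periodic points, cycle-breaking perturbations, and Ma\~n\'e's hyperbolicity argument for $\overline{\mathrm{Per}(f)}$) matches the historical development. One small historical remark: the full converse (that $\Omega$-stability implies Axiom~A) was only completed by Palis and Ma\~n\'e well after the original Palis--Smale paper, so strictly speaking the attribution in the theorem statement covers the ``if'' direction and the conjecture, while the ``only if'' direction is Ma\~n\'e's contribution --- which you implicitly acknowledge. Since there is no proof in the paper to compare against, there is nothing further to add.
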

\begin{theorem}[Ma\~n\'e]\label{Manetheorem}
$f$ is structurally stable if and only if $f$ satisfies the Axiom A and the strong transversality condition.
\end{theorem}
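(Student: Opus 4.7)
This is Mañé's celebrated stability theorem; since it is stated here only as background, a genuinely full proof is far beyond what a sketch can convey, but I will outline how I would organise the argument, treating the two implications separately since they are of entirely different natures.

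For the implication \emph{Axiom A $+$ strong transversality $\Rightarrow$ structural stability} (the Robbin--Robinson direction), the strategy is constructive. First, use the persistence of hyperbolic sets (Hirsch--Pugh--Shub) to conjugate $f$ to any $C^1$-close $g$ on a neighbourhood of $\Omega(f)$, obtaining a homeomorphism $h_0$ between $\Omega(f)$ and $\Omega(g)$ close to the identity and intertwining the dynamics. Second, use the filtration theorem (\ref{Filtrationtheorem}) to work inductively on filtrating plugs, ordered by a total order compatible with the Smale order. On each plug, extend $h_0$ to a neighbourhood of the local stable/unstable lamination by propagating along unstable leaves via the $\lambda$-lemma; the strong transversality of $W^u$ with $W^s$ is exactly what guarantees that the extensions coming from adjacent plugs agree continuously at the gluing boundary. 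Piecing the extensions together yields a homeomorphism of $M$ close to the identity conjugating $f$ and $g$.

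For the converse (\emph{structurally stable $\Rightarrow$ Axiom A $+$ strong transversality}, the genuinely hard part due to Mañé), the plan would be: (i) deduce $\Omega(f) = \overline{\text{Per}(f)}$ from Pugh's $C^1$ closing lemma combined with the stability hypothesis; (ii) show that every periodic orbit is hyperbolic, for otherwise a Franks-lemma perturbation of the derivative along the orbit would create a non-hyperbolic periodic point of a nearby diffeomorphism, destroying any possible conjugacy; (iii) promote the hyperbolicity of all periodic orbits to uniform hyperbolicity of $\Omega(f)$; (iv) deduce the strong transversality condition by ruling out non-transverse heteroclinic intersections, which could otherwise be perturbed via a Franks-type argument into a tangency, again contradicting stability.

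The main obstacle, and the true heart of Mañé's theorem, is step (iii). The difficulty is that hyperbolicity of each individual periodic orbit is very far from implying uniform hyperbolicity of their closure: one needs uniform estimates on the angle of splittings and on the contraction/expansion rates. Overcoming this requires Mañé's ergodic closing lemma, which allows any ergodic invariant probability measure to be approximated by measures supported on periodic orbits of nearby diffeomorphisms, combined with a subtle dichotomy: either one obtains a dominated splitting on $\Omega(f)$ whose constants can then be upgraded (via Liao--Mañé type inequalities) to uniform hyperbolicity, or one can perturb to produce a non-hyperbolic periodic orbit, contradicting step (ii). This dominated-splitting-versus-perturbation machinery is exactly the technology that took almost two decades to develop after the Robbin--Robinson direction was settled.
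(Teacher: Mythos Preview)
Your sketch is a reasonable and well-informed outline of how the literature proof is organised, but there is nothing in the paper to compare it against: the theorem is stated in Section~2.1 purely as a background result (``let us remind two well-known theorems in hyperbolic theory'') and no proof or sketch is given. The paper simply quotes the result and uses it as a black box, so any attempt to supply a proof goes beyond what the paper itself does.
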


\subsection{Structurally stable systems in dimension 2}
In the following section, following the lines of \cite{Bonatti}, we're going to go through some very important results concerning structurally stable systems in dimension 2. All the following results are proven in \cite{Bonatti}. We fix for the rest of this section, $S$ to be a compact oriented surface without boundary, $f \in C^r$ with $r \geq 1$ a structurally stable diffeomorphism preserving the orientation of $S$.

\begin{definition}
If $x\in \Omega(f)$, then any connected component of $W^s(x)- \lbrace x\rbrace$ (resp. $W^u(x)- \lbrace x\rbrace$) is called \emph{stable} (resp. \emph{unstable}) \emph{separatrix}.
\end{definition}
\begin{definition}
A hyperbolic set $K$ is called \emph{saturated} if for every $x,y\in K$, $W^s(x) \cap W^u(y) \subset K$.
\end{definition}
If one looks for a method for describing a saturated hyperbolic set $K$, one could try to isolate it from the others by considering a filtrating plug of $K$ (defined in theorem \ref{Filtrationtheorem}) and extend the dynamics on this manifold with boundary by gluing to its boundary punctured disks.
\begin{theorem}\label{Bigdomain}
If $K$ is a saturated hyperbolic set of $f$, for every $W$ filtrating plug of $K$ there exists an open set $U$ containing $W$ such that 
\begin{enumerate}
\item $U-W$ is a finite set of two by two disjoint punctured disks
\item For every $C$ compact subset of $U$ containing $K$, we have $K=\cap_{n\in \mathbb{Z}} f^n(C)$ 
\item $U$ is diffemorphic to a closed surface from which we removed a finite number of points $p_1,...,p_n$. If we denote this surface $\tilde{S}$ and $\tilde{f}$ the push-forward dynamics, then  $\tilde{f}$ can be continuously extended and the points $p_1,...,p_n$ act as attracting or repelling periodic points. 
\end{enumerate} 
\end{theorem}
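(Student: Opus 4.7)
The plan is to attach to $W$, along each of its boundary circles, a one-sided open collar constructed dynamically from fundamental domains, and to compactify the resulting open set by adding one abstract point per collar.

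First, since $K$ is saturated, I choose a total order on the basic pieces of $f$ compatible with the Smale order in which the pieces forming $K$ occupy consecutive positions, and apply Theorem~\ref{Filtrationtheorem} to obtain a filtrating plug $W = M_j - \overline{M_i}$ whose maximal invariant set is exactly $K$. Being a filtrating plug in a surface, $\partial\overline{W}$ is a finite disjoint union of smooth simple closed curves, partitioned into \emph{exiting} circles (subsets of $\partial M_i$, mapped strictly inside $M_i$ by $f$) and \emph{entering} circles (subsets of $\partial M_j$, mapped strictly outside $M_j$ by $f^{-1}$).

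Next, I fix $N \geq 1$ large enough that $f^N$ preserves each connected component of $M_i$ and of $M\setminus M_j$, and acts trivially on the isotopy classes of boundary-parallel simple closed curves in each component. For each exiting circle $C$, let $V_C$ be the component of $M_i$ adjacent to $C$. Then $f^N(C)$ is a simple closed curve in $\operatorname{int}(V_C)$ isotopic to $C$ within $V_C$, so $C$ and $f^N(C)$ bound a closed fundamental annulus $\overline{B_C}\subset \overline{V_C}$, which I take small enough that $\overline{B_C}\cap \Omega(f)=\emptyset$. I define $A_C := \operatorname{int}\bigl(\bigcup_{k\geq 0} f^{Nk}(\overline{B_C})\bigr)$; by the wandering of $\overline{B_C}$ and the tiling by consecutive fundamental annuli, $A_C \cong S^1\times (0,1)$ and $C\cup A_C\cong S^1\times[0,1)$. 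Entering circles are treated symmetrically via $f^{-N}$. Setting $U := W \cup \bigcup_C (C\cup A_C)$ gives an open subset of $S$ whose complement in $U$ of $W$ is a disjoint union of closed punctured disks, establishing (1). Condition (2) follows because points of $\overline{W}\setminus K$ leave $\overline{W}$ in finite time, and orbits entering a collar $A_C$ travel along the tiling toward the deep end, accumulating on $\Omega(f)\cap V_C$ which is disjoint from $\overline{A_C}$, hence they escape any compact subset of $U$.

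For condition (3), I compactify $U$ by adding one abstract point $p_C$ per collar $A_C$, with fundamental neighborhoods $\{p_C\}\cup \bigcup_{k\geq K_0} f^{Nk}(\overline{B_C})$. The resulting space $\tilde{S}$ is a closed surface with $U \cong \tilde{S}\setminus\{p_C\}_C$. The push-forward $\tilde{f}$ is continuous on $U$ and extends continuously to $\tilde{S}$ by setting $\tilde{f}(p_C) := p_{\sigma(C)}$, where $\sigma$ is the permutation induced on boundary circles by sending $C$ to the boundary circle whose collar contains $f(C)$. Continuity at $p_C$ follows since $f$ maps each tile $f^{Nk}(\overline{B_C})$ into a tile of $A_{\sigma(C)}$ shifted by one level toward $p_{\sigma(C)}$; each $p_C$ becomes an attracting (exiting case) or repelling (entering case) periodic point of $\tilde{f}$ of period dividing $N$. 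The main obstacle is the fundamental-domain construction — showing $A_C$ is a single topological half-open annulus — which relies on the wandering property of $C$ together with the dimension-$2$ feature that disjoint boundary-parallel simple closed curves in a surface bound an annulus; a secondary technical point is arranging $N$ so that $f^N$ genuinely fixes the relevant isotopy classes, which is what allows the fundamental annulus $\overline{B_C}$ to be defined.
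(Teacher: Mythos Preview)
The paper does not prove this theorem: Section~2.2 states explicitly that ``all the following results are proven in \cite{Bonatti}'', so Theorem~\ref{Bigdomain} is quoted without proof. Your proposal is therefore being compared against the general strategy in \cite{Bonatti}, and the overall architecture --- attach a half-open dynamical collar to each boundary circle of $W$, then one-point compactify each collar --- is indeed the right one.

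There is, however, a genuine gap in your justification of the key step. You claim that for an exiting circle $C$ the curves $C$ and $f^N(C)$ cobound an embedded annulus $\overline{B_C}$, and you justify this by choosing $N$ so that $f^N$ ``acts trivially on the isotopy classes of boundary-parallel simple closed curves in each component''. The problem is that $f^N|_{V_C}:V_C\to V_C$ is only an embedding onto a proper subset, not a self-diffeomorphism, so there is no mapping class to take a power of; and more to the point, you have not shown that $f^N(C)$ is boundary-parallel in $V_C$ at all. Nothing you wrote rules out $f^N(C)$ being an essential non-peripheral curve deep inside $V_C$.

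The clean fix is an Euler-characteristic argument. With $V$ a connected component satisfying $f(V)\subset\operatorname{int}(V)$, set $R:=V\setminus\operatorname{int}(f(V))$; then $\chi(R)=\chi(V)-\chi(f(V))=0$, so (orientable case) $R$ is a disjoint union of annuli. If some annulus component had both boundary circles on $\partial V$, it would be a clopen proper subset of the connected surface $V$ (being disjoint from $f(V)$), a contradiction; hence each annulus has one boundary circle on $\partial V$ and one on $f(\partial V)$. This pairs each $C_i\subset\partial V$ with some $f(C_{\tau(i)})$, and iterating shows the component of $M_{i-1}\setminus\operatorname{int}(f^m(M_{i-1}))$ containing $C_i$ is an annulus with other boundary $f^m(C_{\tau^m(i)})$. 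Choosing $N$ with $\tau^N=\operatorname{id}$ gives exactly your $\overline{B_C}$ with $\partial\overline{B_C}=C\cup f^N(C)$. Two smaller remarks: the theorem is stated for \emph{every} filtrating plug, so you should start from an arbitrary one rather than constructing a specific one; and your phrase ``which I take small enough that $\overline{B_C}\cap\Omega(f)=\emptyset$'' is misleading, since $\overline{B_C}$ is already determined --- but the disjointness from $\Omega(f)$ is automatic because $\overline{B_C}$ lies in a wandering fundamental domain.
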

\begin{theorem}\label{regularityoflaminations}
If $K$ is a saturated hyperbolic set that doesn't contain any hyperbolic attractor or repeller, then $W^s(K)$ and $W^u(K)$ are $C^{1,0}$ closed laminations in $U$ with empty interior (i.e. $C^1$ leaves with continuous tangent distributions).
\end{theorem}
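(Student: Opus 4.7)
My plan is to leverage the extension provided by Theorem \ref{Bigdomain}: we compactify $U$ to a closed surface $\tilde S$ by filling in finitely many points $p_1,\dots,p_n$, continuously extending $f|_U$ to a diffeomorphism $\tilde f$ of $\tilde S$ for which each $p_i$ is an attracting or repelling periodic point. The maximality property (2) of Theorem \ref{Bigdomain} forces $\Omega(\tilde f) = K \cup \{p_1,\dots,p_n\}$, all of which is hyperbolic, so $\tilde f$ satisfies Axiom A. I treat only $W^s(K)$; the conclusion for $W^u(K)$ follows by replacing $f$ with $f^{-1}$.

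For closedness in $U$, Smale's decomposition applied to $\tilde f$ partitions every orbit by its $\omega$-limit:
$$\tilde S \;=\; W^s_{\tilde f}(K) \;\sqcup\; \bigsqcup_{p_i \text{ attracting}} W^s_{\tilde f}(p_i) \;\sqcup\; \bigsqcup_{p_j \text{ repelling}} \operatorname{orb}(p_j).$$
Intersecting with $U = \tilde S \setminus \{p_1,\dots,p_n\}$ removes the repelling orbits entirely and removes a single point from each attracting basin, leaving an open set. Hence $W^s(K) = W^s_{\tilde f}(K)\cap U$ is the complement in $U$ of an open set, so it is closed in $U$. For the lamination and $C^{1,0}$ structure, each leaf $W^s(x)$ is a $C^r$ injective immersion by the generalised stable manifold theorem, and two leaves either coincide or are disjoint. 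Around $z\in K$, the standard local product structure coming from hyperbolicity, combined with the $C^1$-continuous dependence clause of the stable manifold theorem, yields a product box in which the $W^s$-plaques of nearby points of $K$ are $C^1$-close to $W^s_{\mathrm{loc}}(z)$; this simultaneously gives the local chart and the continuity of the tangent distribution on the plaques. For $z\in W^s(K)\setminus K$ I iterate forward until $\tilde f^N(z)$ lies in such a box near $K$ and then pull the chart back along $\tilde f^{-N} = f^{-N}$, which is a local $C^r$ diffeomorphism on $U$.

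The main obstacle is the empty-interior claim, and this is precisely where the ``no attractor or repeller in $K$'' hypothesis becomes decisive. Decompose $K = \bigsqcup_{i=1}^k \Omega_i$ into its basic pieces; since $W^s(K) = \bigcup_i W^s(\Omega_i)$ and a finite union of empty-interior sets has empty interior, it suffices to handle each $W^s(\Omega_i)$ separately. I would invoke the standard fact that a basic piece whose stable manifold has non-empty interior is necessarily a hyperbolic attractor: if $V\subset W^s(\Omega_i)$ were a non-empty open set, then a sufficiently deep forward iterate $\tilde f^{n_0}(V)$ would sit inside a filtrating plug of $\Omega_i$ obtained from Theorem \ref{Filtrationtheorem}, producing an open trapping neighbourhood whose maximal invariant set is $\Omega_i$. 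Thus $\Omega_i$ would be a hyperbolic attractor of $\tilde f$ and hence of $f$, contradicting the hypothesis on $K$ and closing the argument.
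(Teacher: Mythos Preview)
The paper does not prove Theorem~\ref{regularityoflaminations}: Section~2.2 opens by announcing that ``all the following results are proven in \cite{Bonatti}'', and this theorem is simply quoted from that reference. So there is no in-paper argument to compare against, and I evaluate your outline on its own.

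Your closedness and $C^{1,0}$ arguments are fine. The empty-interior argument, however, contains a false step. You write that ``a finite union of empty-interior sets has empty interior'', and use this to reduce to a single basic piece $\Omega_i$. That assertion is wrong without a closedness hypothesis (already $\mathbb{Q}$ and $\mathbb{R}\setminus\mathbb{Q}$ give a two-set counterexample), and the individual $W^s(\Omega_i)$ are \emph{not} closed in general: whenever $\Omega_j\to\Omega_i$ with $\Omega_j\subset K$ and $j\neq i$, the $\lambda$-lemma forces $W^s(\Omega_j)\subset\overline{W^s(\Omega_i)}\setminus W^s(\Omega_i)$. So the reduction as written does not go through.

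The repair is short and stays inside your framework. Each $W^s(\Omega_i)$ is an $F_\sigma$, namely $\bigcup_{n\ge 0}\tilde f^{-n}\bigl(W^s_{\mathrm{loc}}(\Omega_i)\bigr)$, and since every $\Omega_i\subset K$ is of saddle type in dimension~$2$ the local stable set is locally (Cantor or finite)$\times$(arc), hence closed and nowhere dense. Baire then gives that the countable union $W^s(K)$ is nowhere dense. Alternatively you can induct along a linear extension of the Smale order on the pieces of $K$: the sets $C_b:=\bigcup_{a\le b}W^s(\Omega_{i_a})\cup\{\text{repelling }p_j\}$ are closed, and any open $V\subset C_b$ would meet $C_b\setminus C_{b-1}\subset W^s(\Omega_{i_b})$ in a nonempty open set, contradicting the single-piece case you already handle.

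One smaller slip: Theorem~\ref{Bigdomain} only asserts that $\tilde f$ extends \emph{continuously} to $\tilde S$, so calling $\tilde f$ an Axiom~A diffeomorphism requires first smoothing the extension in disjoint neighbourhoods of the added sinks and sources; this is routine but should be said.
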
 
\begin{th-definition}\label{def-sboundary}
Let $K$ be a saturated hyperbolic set that doesn't contain any hyperbolic attractors or repellers. Consider the stable manifold $W^s(x)$ of a point $x \in K$ and a very small segment $l$ transverse to it at any point $y \in W^s(x)$. If one connected component of $l-\lbrace y \rbrace$ doesn't intersect $W^s(K)$ then we'll call $x$ an \emph{s-boundary}. If both connected components of $l-\lbrace y \rbrace$ don't intersect $W^s(K)$ then we'll call $x$ a \emph{double s-boundary}. The previous property doesn't depend on the choice of the point $y$ nor the segment $l$ provided that the segment is taken small. We define in the same way a \emph{u-boundary} and a \emph{double u-boundary}. 
\end{th-definition}
\begin{definition}
If we take $K$ any saturated hyperbolic set, a stable or unstable separatrix of a point $x \in K$ will be called \emph{free} if it doesn't intersect any point of $K$.
\end{definition}
\begin{theorem} \label{strcutretheoremforhyperbolicsaturatedsets}
Take $K$ any saturated hyperbolic set.
\begin{enumerate}\label{th-sbords}
\item If $x\in K$ is an s-boundary then every point in its orbit and every point in $W^s(x)\cap K$ is also an s-boundary
\item If $x \in K$ is periodic and an s-boundary then it has at least one free unstable separatrix
\item The number of periodic points that are s-boundaries is finite
\item If $x \in K$ has one free unstable separatrix, it is an s-boundary and a periodic point.
\item If $x \in K$ is an s-boundary it's contained in the stable manifold of a periodic point s-boundary in $K$. 
\item If $K$ doesn't contain any s-boundaries it's a hyperbolic repeller. If $K$ doesn't contain any s-boundaries or u-boundaries, then $f$ is Anosov. 
\item If $x\in K$ is a double s-boundary, then it's an isolated (in $K$) periodic point. Furthermore, the basic piece associated to this periodic point is smaller or not related to any other basic piece in $K$.
\end{enumerate}

\end{theorem}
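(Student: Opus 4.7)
The plan is to combine the $C^{1,0}$ lamination structure of $W^s(K)$ guaranteed by Theorem \ref{regularityoflaminations} (after extending to a compact ambient surface via Theorem \ref{Bigdomain} when $K$ contains no attractor or repeller) with the Axiom~A spectral decomposition of $K$ into transitive basic pieces. Items (1), (2), (4) are essentially local, while (3), (5), (6), (7) are finiteness-style statements exploiting the global behaviour of the lamination.

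For (1), the s-boundary condition is intrinsic to the leaf $W^s(x)$: for any $y\in W^s(x)\cap K$ one has $W^s(y)=W^s(x)$, so the property transfers automatically, and since $f$ maps $W^s(K)$ diffeomorphically to itself while preserving transversality, the whole orbit of $x$ inherits the property. For (2), I would linearise $f$ near a periodic s-boundary $x$; the emptiness of $W^s(K)$ on one side of $W^s(x)$ produces a small open wedge bounded by arcs of $W^s(x)$ and $W^u(x)$, and any point of $K$ lying on the unstable separatrix $\sigma$ that enters this wedge would carry its own stable leaf into the forbidden region, a contradiction, so $\sigma$ is free locally and then globally by iteration. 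Item (4) is the converse: the $\lambda$-lemma says a stable leaf of $W^s(K)$ accumulating on $W^s(x)$ from the $\sigma$-side would have forward iterates crossing $\sigma$ transversally at points of $K$, contradicting freeness, which gives the s-boundary property; periodicity then follows because the set of points of $K$ with a free unstable separatrix is open in $K$ and $f$-invariant, and transitivity of the basic piece containing $x$ forces this set either to be empty (excluded by hypothesis) or to be the entire basic piece, which by a short argument using recurrence and continuity of the lamination forces the basic piece to be trivial.

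The main obstacle is (3). I plan to embed $K$ into a compact ambient surface $\tilde S$ via Theorem \ref{Bigdomain}, turning $W^s(K)$ into a $C^{1,0}$ lamination with empty interior on a compact surface. Each periodic s-boundary orbit then contributes boundary leaves of this lamination, and distinct periodic orbits yield distinct leaves since $\omega$-limits separate periodic orbits; the classical finiteness of boundary leaves for a $C^{1,0}$ lamination on a compact surface then yields the result. Making this classical finiteness rigorous is the step I expect to be hardest, because it sits in the general theory of surface laminations rather than in hyperbolic dynamics proper and must be imported from that literature.

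For (5), any s-boundary $x$ has $\omega(x)\subset K$ meeting periodic points of its basic piece (Smale's decomposition); by (1) and continuity of the lamination these periodic limits are themselves s-boundaries, and $x\in W^s(\omega(x))$ places $x$ on the stable manifold of one of them. For (6), absence of s-boundaries strips $W^s(K)$ of all boundary leaves; combined with Theorem \ref{regularityoflaminations}, which forbids non-empty interior of $W^s(K)$ whenever $K$ contains no attractor or repeller, the only consistent possibility is that $K$ is itself a hyperbolic repeller (where $W^s(K)$ being open is allowed), and the Anosov conclusion follows by the symmetric argument applied also to $W^u(K)$. For (7), a double s-boundary $x$ has $W^s(x)$ isolated in the lamination; this forces $W^s(x)\cap K$ to be finite inside the basic piece of $x$, hence $x$ periodic, and applying (2) on both sides produces two free unstable separatrices, which together with the isolation of $W^s(x)$ prevents the basic piece of $x$ from dominating any other piece of $K$ in the Smale order.
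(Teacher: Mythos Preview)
The paper does not prove this theorem. It appears in Section~2.2, whose opening sentence is ``All the following results are proven in \cite{Bonatti}'', and Theorem~\ref{strcutretheoremforhyperbolicsaturatedsets} is stated without argument as background imported from the Bonatti--Langevin Ast\'erisque volume. There is therefore no proof in the present paper against which to compare your proposal; the actual arguments live in \cite{Bonatti}, and your sketch is broadly in the spirit of what is done there (pass to the compact model $\tilde S$, exploit the $C^{1,0}$ lamination structure, combine with local product structure and the spectral decomposition).

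That said, your argument for the periodicity half of (4) contains a genuine error. You assert that the set of points of $K$ possessing a free unstable separatrix is open in $K$, deduce by transitivity that it is either empty or an entire basic piece, and conclude that the basic piece is trivial. Both the openness claim and the conclusion are false: in a standard horseshoe the s-boundary periodic orbits have free unstable separatrices while generic points do not, so the set is neither open nor all of the piece, and the piece is certainly non-trivial. Statement (4) only asserts that $x$ itself is periodic, not that its basic piece is a single orbit. The correct route (and the one taken in \cite{Bonatti}) is different: having shown that $x$ is an s-boundary, one looks at the $\alpha$-limit of $x$ inside its basic piece and uses the $\lambda$-lemma to see that if $x$ were not periodic then backward iterates of the free separatrix would accumulate on a full unstable manifold of a periodic point and thereby acquire intersections with $W^s(K)$, contradicting freeness. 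Your sketch for (7) has a related ordering problem: you invoke (2) ``on both sides'' to obtain two free separatrices before establishing periodicity, but (2) takes periodicity as a hypothesis; one must first extract isolation of $x$ in $K$ directly from isolation of the leaf $W^s(x)$ via local product structure, obtain periodicity from that, and only then apply (2).
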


Another way of describing the action of a saturated hyperbolic set that doesn't contain any hyperbolic attractors or repellers is to define its domain $\Delta(K)$. The reader can think of the domain of a hyperbolic set $K$ as the ``smallest" invariant submanifold of $S$ with boundary of finite topology containing $W^s(K)$ and $W^u(K)$. Since all the invariant manifolds of $K$ are contained in the domain, outside its domain the dynamics is not influenced by the existence of $K$. The advantage of the domain, compared to the filtrating neighbourhood described in \ref{Bigdomain} is that the filtrating neighbourhood depends too much on the choice of filtration, whereas the domain of a hyperbolic set is more canonical. The following theorem makes more explicit the previous statements
\begin{theorem}\label{domaintheorem}
For every saturated hyperbolic set $K$ that doesn't contain any hyperbolic attractors or repellers, there exists $\Delta(K)$ an invariant submanifold of $S$ (with boundary) of finite topology such that
\begin{enumerate} 
\item $\Delta(K)$ contains $W^s(K)$ and $W^u(K)$
\item The boundary of $\Delta(K)$ contains no circles, just lines. 
\item For every boundary line $l$ of $\Delta(K)$ there exists $n \in \mathbb{N}$ such that $f^n(l)=l$ and for every such $n$ the dynamics close to $l$ is a translation.
\item  $\Delta(K)$ is connected if and only if $W^s(K) \cup W^u(K)$ is connected
\item If $R$ is a polygon (topological disk) whose sides are $s_1,a_1,...,s_n,a_n$ with $s_i \subset W^s(K)$, $a_i \subset W^u(K)$ and all the $s_i$ are s-boundaries (or all the $a_i$ u-boundaries), then $R \subset \Delta(K)$ 
\item If an open invariant set $U$ contains $W^s(K)$, $W^u(K)$ and all the rectangles $R$ defined as in 5, then there exists a continuous embedding of $\Delta(K)$ in $U$
\item Take two hyperbolic saturated sets $K_1$, $K_2$ that satisfy the initial hypothesis. We can take $\Delta(K_1)$ and $\Delta(K_2)$ disjoint if and only if there are no two basic sets $k_1 \subset K_1$, $k_2 \subset K_2$ related by $\rightarrow$.
\end{enumerate}
\end{theorem}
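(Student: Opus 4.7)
The plan is to construct $\Delta(K)$ as the smallest invariant submanifold of finite topology containing both laminations $W^s(K)$, $W^u(K)$ and all rectangles of the form described in item 5, and then to verify items 1--7 using the structure theorem \ref{strcutretheoremforhyperbolicsaturatedsets} on s- and u-boundaries. Concretely, I would start from the closed $C^{1,0}$ set $L := W^s(K) \cup W^u(K)$, whose closedness is guaranteed by Theorem \ref{regularityoflaminations}, adjoin to it every topological polygon $R$ whose sides alternate between s-boundary arcs and u-boundary arcs, and then take a small invariant tubular neighbourhood of the resulting union trimmed along free separatrices of periodic boundary points. This yields $\Delta(K)$ as a submanifold of $S$ with boundary satisfying items 1 and 5 essentially by construction.

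The boundary analysis rests on Theorem \ref{strcutretheoremforhyperbolicsaturatedsets}. By parts 3 and 5 of that theorem, every s-boundary lies in the stable manifold of one of \emph{finitely many} periodic s-boundary points, and symmetrically for u-boundaries. By parts 2 and 4, each such periodic point carries at least one \emph{free} separatrix, and the boundary components of $\Delta(K)$ coincide with arcs of these free separatrices. Because these separatrices are properly embedded lines (rather than circles) in $S$ -- being the stable or unstable manifold of a hyperbolic periodic saddle -- one obtains item 2. Item 3 then follows immediately: each boundary line $l$ is invariant under an iterate $f^n$ where $n$ is a multiple of the period, and the restriction of $f^n$ to the one-dimensional separatrix is smoothly conjugate to a linear contraction or expansion, hence topologically to a translation in a collar neighbourhood. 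The hypothesis that $K$ contains no attractors or repellers is what forces every boundary point to have a free separatrix escaping from $K$, ruling out limit-cycle type boundaries; together with the finiteness from part 3 of Theorem \ref{strcutretheoremforhyperbolicsaturatedsets}, this delivers the finite topology of $\Delta(K)$.

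Item 4 is then easy: adjoining rectangles to $L$ neither connects disjoint components nor disconnects $L$, since each rectangle has sides in $L$, so $\Delta(K)$ is connected iff $L$ is. Item 6 -- the universal property expressing the canonicity of $\Delta(K)$ -- is the main content that remains; I would prove it by taking any invariant open $U$ containing $L$ and the rectangles, using the local product structure of the hyperbolic laminations to build, leaf by leaf, an invariant continuous embedding $\Delta(K) \hookrightarrow U$ that is the identity on a neighbourhood of $L$. Item 7 follows once item 6 is in hand: if no basic pieces of $K_1$ and $K_2$ are $\rightarrow$-related, then $W^u(K_1) \cap W^s(K_2) = W^s(K_1)\cap W^u(K_2) = \emptyset$, so disjoint invariant neighbourhoods of $L_1$ and $L_2$ together with their respective rectangles exist, into which both domains embed; conversely a relation $k_1 \to k_2$ immediately forces the laminations, and hence the domains, to intersect. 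The hardest step is item 6, where one must ensure that the choice of which rectangles and how much ``padding'' to add does not depend on auxiliary choices -- this is exactly where the assumption of no attractors or repellers, combined with the rigidity statements in parts 6 and 7 of Theorem \ref{strcutretheoremforhyperbolicsaturatedsets}, is indispensable.
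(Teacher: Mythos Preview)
The paper does not actually prove this theorem: Section~2.2 opens by stating that ``all the following results are proven in \cite{Bonatti}'', so Theorem~\ref{domaintheorem} is quoted from the Bonatti--Langevin monograph rather than established here. Your proposal is therefore not being compared against a proof in the paper but against the construction in \cite{Bonatti}.

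That said, there is a genuine error in your sketch concerning what the boundary of $\Delta(K)$ actually is. You assert that ``the boundary components of $\Delta(K)$ coincide with arcs of these free separatrices'' of periodic s- and u-boundary points. This is not correct: the free separatrices belong to $W^s(K)\cup W^u(K)$ and hence lie in the \emph{interior} of $\Delta(K)$ by item~1. The boundary lines of $\Delta(K)$ are instead wandering arcs lying in the complement of $W^s(K)\cup W^u(K)$; in the compactified picture of Proposition~\ref{attractingrepelling} each such line runs from a repelling periodic point $p_i$ to an attracting periodic point $p_j$ of $\tilde f$. This is precisely why the dynamics near a boundary line is a translation: every point on it is wandering, with $\alpha$-limit a source and $\omega$-limit a sink, so an iterate of $f$ acts on the line without fixed points. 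Your argument for item~3 --- that the restriction of $f^n$ to a separatrix is ``a linear contraction or expansion, hence topologically a translation'' --- is wrong on its own terms (a contraction on a half-line is not conjugate to a translation on $\mathbb{R}$) and is applied to the wrong object.

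This misidentification also undermines your construction of $\Delta(K)$ itself: ``trimming along free separatrices'' will not produce the correct domain, and a naive tubular neighbourhood of $L$ together with the rectangles need not be an invariant submanifold. The actual construction in \cite{Bonatti} builds $\Delta(K)$ by adjoining to $W^s(K)\cup W^u(K)$ the polygonal regions of item~5 and then carefully choosing invariant boundary lines inside the residual strips bounded by pairs of free separatrices; getting those lines to be $f$-periodic and the whole to have finite topology is where the work lies.
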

Not every domain of a hyperbolic set is connected, not even for a transitive piece. Even if the domain of a piece is connected, its topology can be very complicated. What is the relation of the domain with the open neighbourhood $U$ of the theorem \ref{Bigdomain}? The answer is given by the following proposition.
\begin{proposition}\label{attractingrepelling}
Following the notations of \ref{Bigdomain}, there exists a continuous embedding of $\Delta(K)$ in $\tilde{S}$ such that every line of the boundary of $\Delta(K)$ goes from one periodic repelling point $p_i$ to some periodic attracting point $p_j$.   
\end{proposition}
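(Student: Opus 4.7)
The plan is to view $K$ as a saturated hyperbolic set for the extended dynamics $(\tilde S,\tilde f)$, in which $K$ still contains no attractors or repellers but the added periodic points $p_1,\dots,p_n$ form the extremal basic pieces, and to apply part $6$ of Theorem~\ref{domaintheorem} in this new ambient space. Since the hyperbolic structure of $K$ is unchanged by the extension, the abstract domain $\Delta(K)$ is canonically the same object. Taking the open invariant set to be $\tilde S$ itself --- which trivially contains $W^s(K)$, $W^u(K)$ and every polygon described in part $5$ --- part~$6$ yields a continuous $\tilde f$-equivariant embedding $\iota:\Delta(K)\hookrightarrow\tilde S$. The image of $\iota$ must avoid every $p_i$: otherwise $\iota^{-1}(p_i)$ would be a periodic attractor or repeller inside $\Delta(K)$, contradicting the absence of such pieces in $\Delta(K)$ by construction. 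Hence $\iota(\Delta(K))\subset V:=\tilde S\setminus\{p_1,\dots,p_n\}$.

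Next, fix a boundary line $l$ of $\Delta(K)$ and let $n\in\mathbb{N}$ be such that $\tilde f^n(l)=l$ and $\tilde f^n$ is topologically conjugate to a translation on an open tubular neighborhood $N$ of $l$, as provided by part $3$ of Theorem~\ref{domaintheorem}. I claim $l$ does not meet $W^s(K)\cup W^u(K)$. Indeed, if $y\in l\cap W^s(x)$ for some periodic $x\in K$, then $\tilde f^{nk}(y)\to x$ as $k\to\infty$, so the positive end of the translation chart on $N$ corresponds in $\tilde S$ to the point $x$. But the translation normal form then forces the forward iterate $\tilde f^{nk}(z)$ of every $z\in N$ to escape every compact subset of $N$ in the positive direction and therefore to converge to $x$ in $\tilde S$ as well, placing the whole $2$-dimensional set $N$ inside the $1$-dimensional stable manifold $W^s(x)$, a contradiction. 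The symmetric argument, applied to $\tilde f^{-nk}$, rules out $l\cap W^u(K)\neq\emptyset$.

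Applying the phase theorem to $\tilde f$, the complement $\tilde S\setminus W^s(K)$ is the disjoint union of the basins $W^s(p_j)$ of the attracting periodic points together with the repelling periodic points themselves. Since $\iota(l)$ is connected, disjoint from $W^s(K)$, and contains no fixed points of any iterate of $\tilde f$, it lies entirely in the basin of a single attractor $p_j\in\{p_1,\dots,p_n\}$. Orienting $l$ so that $\tilde f^n$ translates in the positive direction, one has $\tilde f^{nk}(y)\to p_j$ for every $y\in l$, so the positive end of $\iota(l)$ accumulates precisely at $p_j$. Symmetrically the negative end accumulates at a single repelling periodic point $p_i$, completing the proof. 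The main hurdle I anticipate is the separation property $l\cap(W^s(K)\cup W^u(K))=\emptyset$ in paragraph two: making the dimension-count rigorous requires carefully transferring the translation normal form on $N$ to the statement that the ideal positive boundary of $N$ collapses to a single point in $\tilde S$ whenever one orbit on $l$ does; once this is granted, the phase-theorem basin decomposition closes the argument cleanly.
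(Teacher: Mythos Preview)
The paper does not prove this proposition; together with the other results of Section~2.2 it is simply quoted from \cite{Bonatti}. So there is no in-paper argument to compare against, and your proposal has to stand on its own.

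Your outline---embed $\Delta(K)$ via part~6 of Theorem~\ref{domaintheorem}, then argue that each boundary line is trapped in a single attractor basin at one end and a single repeller basin at the other---is the natural one, and the final paragraph is fine once the separation $l\cap(W^s(K)\cup W^u(K))=\emptyset$ is known: connectedness of $l$ together with openness and disjointness of the basins $W^s(p_j)$ forces $l$ into a single basin, and uniform convergence on a fundamental domain of the translation shows the end really lands on $p_j$.

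The genuine gap is exactly where you flag it. From ``$\tilde f^{nk}(y)\to x$ for one $y\in l$'' you cannot conclude that the positive end of the strip $N$ collapses to the single point $x$ in $\tilde S$: the translation chart only tells you that orbits leave every compact subset of $N$, not where they accumulate in the ambient surface, so the dimension count never gets started. Two further problems compound this. First, you treat only $y\in W^s(x)$ with $x$ \emph{periodic}, but a point of $W^s(K)$ lies on $W^s(x)$ for a general $x\in K$, and density of periodic points does not reduce the general case to yours. Second, the phase theorem you invoke is stated for Axiom~A diffeomorphisms, whereas $\tilde f$ is only continuous at the $p_i$; this is a minor point (the basin decomposition survives because the $p_i$ are topological sinks/sources and $\tilde f$ is smooth elsewhere), but it should be said. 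The disjointness $\partial\Delta(K)\cap(W^s(K)\cup W^u(K))=\emptyset$ is in fact built into the construction of the domain in \cite{Bonatti}: the boundary lines lie inside the translation bands, which are by definition components of the complement of the laminations. I do not see how to extract this purely from the axiomatic properties listed in Theorem~\ref{domaintheorem}; you should cite that feature of the construction directly rather than try to rederive it.
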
 
The previous proposition gives us a dynamical orientation of every line in the boundary of $\Delta(K)$. 
\begin{cor-definition}\label{extendeddomain}
By adding a finite number of points to the domain of a saturated hyperbolic piece, we can extend the domain, to a submanifold of $\tilde{S}$ with boundary and ``angle" points, which we'll call the \emph{extended domain}. The boundary of the extended domain consists of a finite number of circles.
\end{cor-definition}
\begin{definition}\label{lengthofextendeddomain}
Take a domain of a saturated hyperbolic set $K$ that doesn't contain any hyperbolic attractors or repellers and $\tilde{K}$ its associated extension. Every boundary component $C$ of $\tilde{K}$ contains a finite number of boundary arcs of $K$, which will be called \emph{the length of the boundary component} $C$.
\end{definition}
\begin{figure}[h!]
\begin{center}
\includegraphics[scale=0.3]{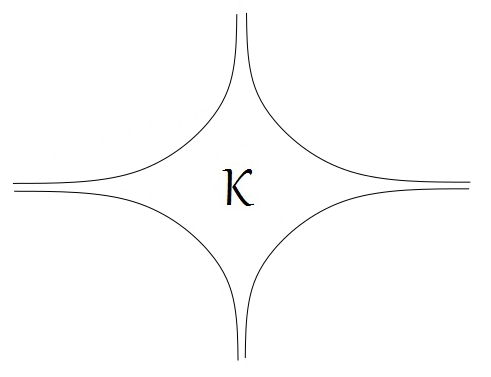}
\caption{Domain of a saturated hyperbolic piece}
\label{domain}
\end{center}
\end{figure}
In the figure \ref{domain} we can clearly see the unique circle forming the boundary of the extension of $\Delta(K)$. Notice that since every line of the boundary connects one attracting and one repelling point according to proposition \ref{attractingrepelling}, hence the cycles of lines in every domain's boundary have even length.  
\begin{theorem}\label{gluening}
Take a finite family of extended domains $\lbrace (\Delta(K_i),f_i)\rbrace$ as defined in the previous corollary. Orient the boundaries of the previous domains (take for instance the inward orientation of the boundary of every $\Delta(K_i)$) and denote by $\Gamma^{+}$ the lines of the boundary for which this orientation coincides with the dynamical orientation (defined after the proposition \ref{attractingrepelling}). Denote by $\Gamma^{-}$ all the other lines. $\Gamma^-$ and $\Gamma^{+}$ are invariant by $f$. If $\phi: \Gamma^{+} \rightarrow \Gamma^{-}$ is a bijection such that $\phi \circ f= f \circ \phi$, for every $\gamma \in \Gamma^{+}$ there exists a homeomorphism $h(\gamma):\gamma \rightarrow \phi(\gamma)$ that defines a gluing between $\gamma$ and $\phi(\gamma)$ and such that the union of the domains, glued along their boundaries by the $(h(\gamma))_{\gamma \in \Gamma^{+}}$ defines an oriented surface without boundary endowed with a structurally stable diffeomorphism $f$, whose restriction on the domain $\Delta(K_i)$ is exactly $f_i$. 
\end{theorem}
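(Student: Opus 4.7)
The plan is to carry out the construction in three stages: (a) choose dynamically compatible gluing homeomorphisms, (b) check that the topological quotient is a closed oriented surface, and (c) verify structural stability via Mañé's Theorem (\ref{Manetheorem}).

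First, for each $f$-orbit of boundary lines in $\Gamma^{+}$, I would pick a representative $\gamma_0$. Since $\phi \circ f = f \circ \phi$, the periods of $\gamma_0$ and $\phi(\gamma_0)$ under $f$ coincide, and by point 3 of Theorem \ref{domaintheorem} the first-return map on each boundary line is conjugate to a translation. I would then choose $h(\gamma_0)$ to be any smooth homeomorphism conjugating these two first-return translations, and extend the family to the whole orbit by the rule $h(f^k\gamma_0) := f^k \circ h(\gamma_0) \circ f^{-k}$. This produces a family $\{h(\gamma)\}_{\gamma \in \Gamma^{+}}$ intertwining $f$ on each boundary arc with $f$ on its image, which is necessary for the glued map to be well defined.

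Second, I would analyse the topological quotient $\Sigma$. Each boundary circle of an extended domain alternates between $\Gamma^{+}$ and $\Gamma^{-}$ arcs separated by angle points, which are attractor or repeller periodic points of the ambient dynamics from Theorem \ref{Bigdomain}. The orientation-reversing gluing of a $\Gamma^{+}$ arc to a $\Gamma^{-}$ arc, combined with the way the dynamical orientation enters the definition of $\Gamma^{\pm}$, forces $h(\gamma)$ to send attracting endpoints to attracting endpoints and repelling to repelling. Hence only angle points of the same dynamical type get identified, and following the gluings around an equivalence class of angle points yields a cyclic chain of sectors whose union is topologically a disk. Thus $\Sigma$ is a closed oriented topological surface and $f$ descends to a homeomorphism of $\Sigma$ whose restriction to each $\Delta(K_i)$ is $f_i$ by construction.

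Third, I would promote this to a $C^r$ structurally stable diffeomorphism. Near each glued boundary arc, the dynamics on both sides is a translation, and choosing the $h(\gamma)$ to be smooth conjugations of translations lets $f$ extend smoothly across the arc. Near each identified equivalence class of angle points, the incident sectors all carry the same dynamical type (attractor or repeller), so after a suitable chart change they can be assembled into a single smooth hyperbolic attractor or repeller. The non-wandering set of the resulting diffeomorphism is the disjoint union of the $K_i$ together with the trivial periodic orbits at the identified angle points; all basic pieces are hyperbolic. Strong transversality holds inside each $\Delta(K_i)$ by assumption and is automatic at the trivial attractors and repellers. Theorem \ref{Manetheorem} then yields structural stability. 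The principal obstacle is precisely the smooth extension at the identified angle points: one must verify that the topological identification of the adjacent sectors is realisable by a smooth chart in which the glued dynamics is a standard hyperbolic sink or source. This reduces to a local linearisation problem, which can be handled by choosing the $h(\gamma)$ along the incident arcs to be linear in appropriate coordinates near each endpoint.
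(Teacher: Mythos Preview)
The paper does not contain a proof of Theorem~\ref{gluening}. This theorem is stated in Section~2.2 (Preliminaries), where the author explicitly writes ``All the following results are proven in \cite{Bonatti}'' and quotes the result from Bonatti--Langevin without argument. There is therefore no proof in the paper to compare your proposal against.

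That said, your three-stage outline is broadly the right shape for how such a gluing theorem is established, and it matches the spirit of the construction in \cite{Bonatti}: exploit the translation normal form on boundary lines (Theorem~\ref{domaintheorem}(3)) to build equivariant gluing maps, check the quotient is a closed surface by analysing the identifications at the angle points, and then verify Axiom~A plus strong transversality. One point to be careful about: your claim that strong transversality ``holds inside each $\Delta(K_i)$ by assumption'' is not quite what the theorem says---the input is a family of extended domains with their dynamics, not a priori pieces of a structurally stable system---so you would need to argue transversality of $W^s(K_i)$ and $W^u(K_j)$ across glued boundaries as well, not just within each domain. Since the glued boundary arcs lie in basins of the new trivial attractors/repellers, the only nontrivial intersections of invariant manifolds of saddle-type pieces occur inside the original domains, which handles this, but it deserves a sentence. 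The smoothing at angle points that you flag as the principal obstacle is indeed where the technical work lies; in \cite{Bonatti} this is done via explicit local models.
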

The above theorem states that gluing domains along their boundary constructs a stable diffeomorphism with trivial hyperbolic attractors and repellers. The converse is also true.
\begin{proposition}\label{unglue}
Every diffeomorphism $f$ acting on a closed surface with trivial hyperbolic attractors and repellers can be obtained as the result of gluing a set of domains along their boundaries, as described in the previous theorem.
\end{proposition}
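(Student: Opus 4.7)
The strategy is to invert the gluing construction of Theorem \ref{gluening}: I first extract from $(S,f)$ the natural saddle-clusters as saturated hyperbolic sets, then form their extended domains, and finally exhibit a gluing that reconstructs $(S,f)$.

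\emph{Step 1: isolating the saddle-clusters.} On the set of saddle-type basic pieces of $f$, consider the equivalence relation generated by $\rightarrow$ (two saddles are equivalent iff a zig-zag chain of saddles, each consecutive pair related by $\rightarrow$ or $\leftarrow$, connects them), with classes $[C_1],\ldots,[C_r]$. For each class set
\[
K_i \;=\; \bigcup_{\Omega \in [C_i]} \Omega \;\cup\; \bigcup_{\Omega, \Omega' \in [C_i]} \bigl(W^u(\Omega) \cap W^s(\Omega')\bigr).
\]
Axiom A and Axiom B, together with the local product structure, imply that each $K_i$ is a compact, $f$-invariant, hyperbolic and saturated set. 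By construction no $K_i$ contains a hyperbolic attractor or repeller. Applying Theorem \ref{domaintheorem} produces the domain $\Delta(K_i)$; part (7) of that theorem, combined with the fact that by definition no piece of $[C_i]$ is $\rightarrow$-related to a piece of $[C_j]$ for $i \neq j$, lets us realise the $\Delta(K_i)$ as pairwise disjoint subsurfaces of $S$. Enlarge each to the extended domain $\tilde{\Delta}(K_i)$ by adjoining the relevant sinks and sources as angle points (Corollary-Definition \ref{extendeddomain} and Proposition \ref{attractingrepelling}): its boundary then consists of circles whose edges are orbits going from a source to a sink with translation dynamics.

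\emph{Step 2: covering of $S$ and definition of the gluing.} For any $x \in S$, Axiom A provides an $\alpha$-limit $\Omega_-$ and an $\omega$-limit $\Omega_+$ that are basic pieces of $f$. If both are saddles they lie, by definition of the equivalence, in the same class $[C_i]$, and $x \in K_i \subset \tilde{\Delta}(K_i)$. If exactly one is a saddle, then $x$ belongs to its stable or unstable manifold and hence to the corresponding $\Delta(K_i)$. If $\Omega_- = a$ is a source and $\Omega_+ = b$ is a sink, the orbit of $x$ lies inside a wedge at $a$ and at $b$ bounded by free separatrices which, by the rectangle condition of Theorem \ref{domaintheorem}(5) together with the extension at the angle points $a,b$, is contained in some $\tilde{\Delta}(K_i)$. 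Therefore $\bigcup_i \tilde{\Delta}(K_i) = S$ and two extended domains can only meet along their boundary edges. Each boundary edge $\gamma \in \Gamma^+$ corresponds to a unique source-to-sink orbit in $S$ whose opposite side is some edge $\gamma' \in \Gamma^-$; setting $\phi(\gamma) = \gamma'$ and taking $h(\gamma)$ to be the identity along the underlying orbit of $S$ yields the $f$-equivariant bijection required by Theorem \ref{gluening}, which, applied to this data, reconstructs $(S,f)$.

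\emph{Main obstacle.} The delicate point is the covering claim in Step 2, and within it the case of orbits heteroclinic from a source directly to a sink: these are \emph{a priori} disjoint from every $W^s(K_i) \cup W^u(K_i)$, and one must argue that the free separatrices bounding their wedges do form boundary arcs of some extended domain, so that Theorem \ref{domaintheorem}(5) and the extension process absorb the whole wedge into $\bigcup_i \tilde{\Delta}(K_i)$.
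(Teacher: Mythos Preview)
The paper does not actually prove this proposition; Section~2.2 explicitly defers all of its results, including this one, to \cite{Bonatti} (``All the following results are proven in \cite{Bonatti}''). So there is no in-paper proof to compare against, and your proposal is being measured against the general shape of the argument in that reference.

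Your architecture is the right one and matches how the decomposition is organised in \cite{Bonatti}: partition the saddle pieces into $\rightarrow$-connected clusters, saturate each to obtain the $K_i$, form their domains via Theorem~\ref{domaintheorem}, use part~(7) to make them pairwise disjoint, and then reconstruct the gluing. Step~1 and the first two cases of Step~2 are fine.

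The source-to-sink case, which you correctly flag as the main obstacle, is however not handled correctly. You propose to absorb such an orbit into some $\tilde\Delta(K_i)$ by arguing that the free separatrices bounding its wedge ``form boundary arcs of some extended domain'' and then invoking Theorem~\ref{domaintheorem}(5). But the boundary lines of $\Delta(K_i)$ are \emph{not} free separatrices: by Theorem~\ref{domaintheorem}(3) and Proposition~\ref{attractingrepelling} they are themselves source-to-sink orbits along which the dynamics is a translation, lying in the complement of $W^s(K_i)\cup W^u(K_i)$. Free separatrices lie in the interior of the domain. Moreover, condition~(5) concerns polygons with \emph{alternating} stable and unstable sides that are $s$- or $u$-boundaries; a sector at a sink bounded by two free unstable separatrices is not of that form, so (5) does not apply. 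The actual argument, carried out in \cite{Bonatti}, is that the complement in $S$ of the sinks, the sources, and $\bigcup_i\bigl(W^s(K_i)\cup W^u(K_i)\bigr)$ decomposes into finitely many invariant strips on which $f$ is conjugate to a translation, each running from a single source to a single sink; the boundary lines of the $\Delta(K_i)$ are chosen as specific source-to-sink orbits inside these strips so that every strip splits into two half-strips, one attached to each neighbouring domain. It is this strip decomposition that produces the pairing $\phi:\Gamma^+\to\Gamma^-$ and the covering of $S$, and it does not follow from~(5).
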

\subsection{DA maps}
A useful method for constructing new hyperbolic sets from old ones is the DA map. Take any structurally stable diffeomorphism $f$ acting on a compact surface $S$, such that $f$ possesses a non-trivial basic piece $K$. We know from \cite{Bowen} that the action by $f$ on any such hyperbolic set $K$ can be described as a factor of a shift of finite type. Therefore, $K$ possesses many periodic points. Without any loss of generality, by considering $f^n$ for $n$ big enough, we can assume that $K$ contains many fixed points all of which have positive eigenvalues. Since $K$ is non-trivial, those fixed points are of type saddle. Take $p$ such a fixed point such that it doesn't have any free separatrix (such a point exists by theorem \ref{domaintheorem}). By taking an appropriate chart that sends $p$ to $0$, we can assume that $f$ acts on an open neighbourhood $U$ of $0 \subset \mathbb{R}^2$ and that $(0,1)$, $(1,0)$ are eigenvectors of $d_{0}f$ corresponding respectfully to the eigenvalues $\lambda_1 <1$, $\lambda_2 >1$. Consider a smooth function in $\phi:\mathbb{R} \rightarrow \mathbb{R}$ with the following properties:
\begin{enumerate}
\item $\phi(t)=\phi(-t)$
\item $\phi(x)=1$ if $x\in [-b,b]$, $\phi(x)=0$ for $x \in (-\infty,-a) \cup (a,+\infty)$, with $b<a$ positive real numbers
\item $\phi'(t)<0$ for every $t \in (-a,-b) \cup (b, a)$
\end{enumerate}
Consider now $\tilde{f}(x,y)=f(x,y)+((2-\lambda_1)\phi(kx)\phi(y)x, 0)$ where $k\in \mathbb{N}$ is taken big enough and $a$ small enough in the definition of $\phi$. 
\begin{figure}[h!]
\begin{center}
\includegraphics[scale=0.5]{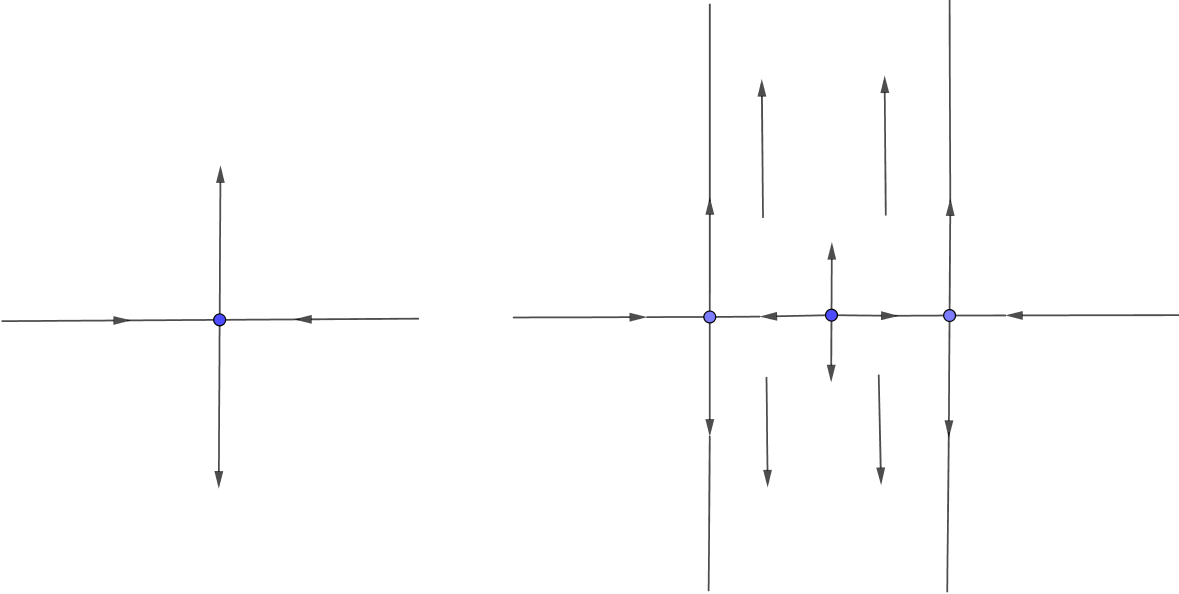}
\caption{Opening up the unstable separatrices}
\label{DA map}
\end{center}
\end{figure}
The point $0$ has become a repelling point for $\tilde{f}$ and $\tilde{f}$ has also two new fixed saddle points close to $0$. One can see this easily for a linear map and he could be convinced that if the perturbation's support is very close to $0$ then $f$ looks more and more like its linear part. We can now extend $\tilde{f}$ on $S$ (if the support of $\phi$ is taken small enough $f=\tilde{f}$ close to the boundary of $U$) to a diffeomorphism. The previous perturbation, as you can see in figure \ref{DA map}, opens up the unstable separatrices of the fixed point $p$. The fixed point and its unstable separatrices have been replaced by a band, whose boundary consists of two new saddle fixed points and their unstable separatrices and whose interior contains the basin of a new repelling point. The dynamics outside the band still remains the same. The hyperbolic piece $K$ is intact, except from its points on $W^u(p)$, that now have been divided each into two new points. Every stable manifold crossing $W^u(p)$ at the point $k$ now crosses the whole band that has been added and intersects its boundary and the two new points that $k$ has been divided into. 
\par{The previous operation adds one new repelling point to the ambient dynamics and is called DA repelling map. In a similar way, one can define the DA attracting map. Applying the DA map to a structurally stable system, gives a new structurally stable system. This is why, the DA maps are going to be of great importance to us and are going to be frequently used in the following pages. For a more detailed approach to DA maps, one can look at the very well analysed example in chapter 17.1 of \cite{Katok}.

\section{The answer to Smale's question, the $\Omega$-stable case }
\subsection*{Plugs and gluings of plugs}
In analogy with the notion of plug defined in \cite{Yu} we define the following:
\begin{definition}
Take an orientable surface with boundary $S$, $D(S)$ is a compact submanifold of $S$ and $f\in C^1(D(S),S)$ such that $f$ is injective, $f(D(S)) \cup f^{-1}(D(S))= S$ and close to the boundary of $S$ is conjugated smoothly to a translation (i.e. if $C$ is a boundary component of $S$, there exists a tubular neighbourhood $U$ of $C$ such that $f(U\cap D(S))\cap U \neq \emptyset$ and that $f$ is smoothly conjugated to a translation on the cylinder $U$). A couple $(S,f)$ satisfying the previous hypothesis will be called a \emph{plug}. Furthermore, if $C$ is a boundary component of $S$, we'll say that $C$ is an \emph{entrance boundary} if $D(S)$ contains a neighbourhood of $C$. Otherwise, if $C$ is contained in the image of $f$ we'll say that it's an \emph{exit boundary}. If a plug has only exit boundary components, we'll call it a \emph{repelling plug} and if it has only entrance boundary components we'll call it an \emph{attracting plug}.
\end{definition} 
\begin{figure}[h!]
\begin{center}
\includegraphics[scale=0.4]{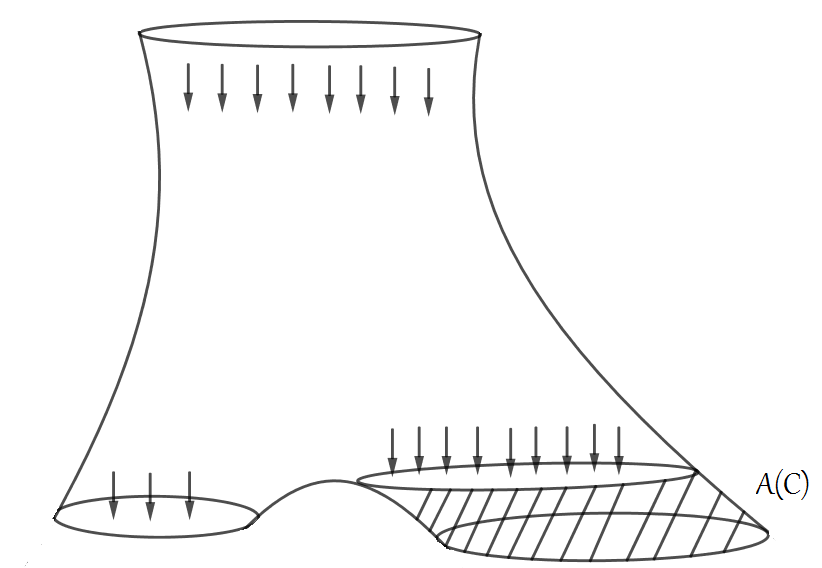}
\caption{An example of a plug}
\label{plug}
\end{center}
\end{figure}
In figure \ref{plug}, we give an example of a plug, with one entrance boundary and two exit boundaries. 
\begin{definition}
We'll say that a plug $(S,f)$ is \emph{hyperbolic} if the invariant set $\cap_{n\in \mathbb{Z}} f^n(S)$ is non-empty and hyperbolic. Furthermore, a hyperbolic plug will be called \emph{transitive} if the previous invariant set is a hyperbolic transitive set.  
\end{definition}
\begin{definition}
Take $(S,f)$ a hyperbolic plug and $K$ its hyperbolic invariant set defined as above. A boundary exit component $C$ of $S$ will be called \emph{useful} if $W^u(K) \cap C \neq \emptyset$. Similarly, a boundary entry component $C$ of $S$ will be called useful if $W^s(K) \cap C \neq \emptyset$.
\end{definition}
\begin{definition}
Take an exit boundary component $C$ of a hyperbolic plug $(S,f)$. The tubular closed neighbourhood $U$ of $C$, whose boundary is exactly $C$ and $f^{-1}(C)$ will be called the \emph{fundamental exit annulus} of $C$ and will be denoted by $\mathcal{A}(C)$ (see figure \ref{plug}). The torus defined by $U/f$ will be called the \emph{exit fundamental domain} associated to $C$ and will be denoted $\mathcal{T}(C)$. If $C$ is useful, then we can project $W^u(K)$ on the fundamental domain and this projection will be called \emph{exit lamination}. We define in the same way the \emph{entrance lamination}.    
\end{definition}
Take a diffeomorphism $f$ acting on a closed surface and a saturated hyperbolic set $K$. A filtrating plug of $K$ is a plug in the sense of the above definition. This is why, from now on we'll assume that the entrance boundary of any hyperbolic plug doesn't intersect $W^u(K)$ and intersects transversally $W^s(K)$. We make the analogous hypothesis for the exit boundary. In figure \ref{plug} the reader can find an example of a possible $W^s(K)$ lamination on the fundamental torus close to an entrance boundary component.
\begin{figure}[h!]
\begin{center}
\includegraphics[scale=1]{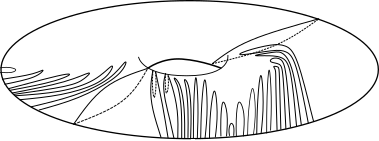}
\caption{}
\label{plug}
\end{center}
\end{figure}
\begin{definition}
Take two hyperbolic plugs (not necessarily distinct) $(S_1,f_1,K_1)$, $(S_2,f_2,K_2)$, $C_1$ an exit boundary component of $S_1$, $C_2$ an entrance boundary component of $S_2$. Take a diffeomorphism $\phi: \mathcal{A}(C_1) \rightarrow  \mathcal{A}(C_2)$ such that $\phi(C_1)=f_2(C_2)$ and $(\phi \circ f_1)_{f_1^{-1}(C_1)}=(f_2 \circ \phi)_{f_1^{-1}(C_1)}$ (see figure \ref{plugglue}). Consider now $S_1\sqcup S_2/ \phi$, which can be given a smooth structure of surface with boundary and on which $f_1$ can be extended continuously by $f_2$, by our previous hypothesis. We'll say that $\phi$ is a \emph{plug gluing} if the extension of $f_1$ by $f_2$ is differentiable in $S_1\sqcup S_2/ \phi$.  
\end{definition}
\begin{figure}[h!]
\begin{center}
\includegraphics[scale=0.4]{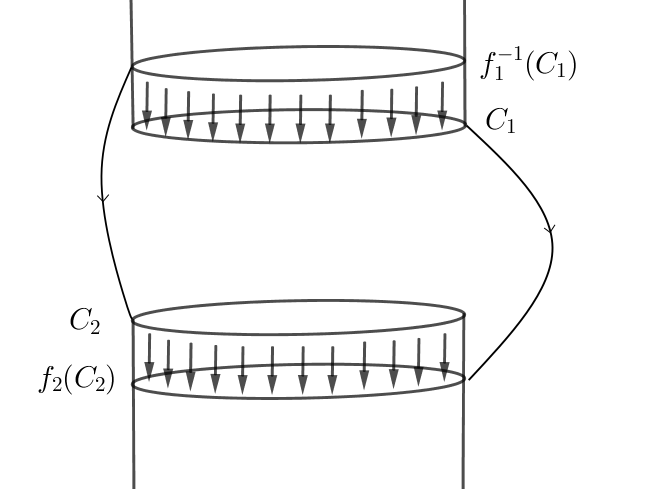}
\caption{}
\label{plugglue}
\end{center}
\end{figure}
\begin{definition}
Using the notations of the previous definition, if $\phi:\mathcal{A}(C_1) \rightarrow  \mathcal{A}(C_2)$ is a gluing then by definition we can define its projection $\overline{\phi}:\mathcal{T}(C_1) \rightarrow  \mathcal{T}(C_2)$. We'll call $\phi$ is a \emph{transverse gluing} if the image of the exit lamination of $C_1$ by $\overline{\phi}$ is a transverse lamination to the entrance lamination of $C_2$.
\end{definition}
\subsection*{Proof of theorem \ref{Smalequestion}}
We're going to prove that any partial order on a finite set can be realised as a Smale order of an Axiom A and Axiom B diffeomorphism $f$ acting on a closed surface. In the following lines, we'll prove something even stronger: that $f$ can be constructed in such way that for any two basic pieces of $f$ of saddle type $\Omega_i \rightarrow \Omega_j$, the unstable manifolds of $\Omega_i$ intersect transversally the stable manifolds of $\Omega_j$. Therefore, our construction will produce an Axiom A diffeomorphism satisfying the strong transversality condition everywhere, except at the levels of the hyperbolic attractors and repellers. We will see that this comes as no surprise, since satisfying the strong transversality condition at the levels of the attractors and repellers is not always possible. The proof of theorem \ref{Smalequestion} is based on the following ideas:
\begin{enumerate}
\item Given any natural numbers $n,m$ such that $m+n \neq 0$, there exists a transitive hyperbolic plug with $n$  useful entrance boundary components and $m$ useful exit boundary components.
\item A transverse gluing of two hyperbolic plugs defines a new hyperbolic plug
\item Take any two tori $(T_i)_{i=1,2}$ supporting each a $C^{1,0}$ lamination with empty interior $L_i$, along with an oriented simple closed curve $\gamma_i$ transverse to the lamination $L_i$. Consider for each lamination a leaf $l_i \in L_i$. There exists a diffeomorphism $\phi: T_1 \rightarrow T_2$ such that $\phi(L_1)$ is transverse to $L_2$, $\phi(l_1)$ intersects $l_2$ and $\phi(\gamma_1)=\gamma_2$ with $\phi$ respecting the orientation of the $\gamma_i$.  
\end{enumerate}
In the following lines, we're only going to prove steps 1 and 2. Step 3 is true for a generic diffeomorphism satisfying $\phi(\gamma_1)=\gamma_2$ and $\phi(l_1) \cap l_2 \neq \emptyset$. We leave the construction of such a diffeomorphism to the reader. 
\begin{proof}[Step 1]
Take an Anosov diffeomorphism $f$ on $\mathbb{T}^2$. By considering $f^N$ for $N \in \mathbb{N}$ big enough we can assume that $f$ has many fixed points. Choose one of them, say $p$ and apply to $p$ the DA repelling map. In this way, by our discussion on section 2.3, we get a new stable diffeomorphism $\tilde{f}$ on $\mathbb{T}^2$ for which $p$ is a repelling point. Remove a small open disk $D$ around $p$. Close to $p$ the dynamics is conjugated to a homothecy, therefore the couple $(\mathbb{T}^2-D, \tilde{f})$ is a plug. Furthermore, since the dynamics outside of the newly added band (see figure \ref{DA map}) is the same as before, the plug $(\mathbb{T}^2-D, \tilde{f})$ is a transitive hyperbolic plug with one useful entrance boundary component. By applying $m$ times the DA attracting map and $n$ times the DA repelling map, we get in the way we described before the prescribed number of useful entry and exit boundaries. 
\end{proof}
\begin{proof}[Step 2] 
\par{Suppose we glue two hyperbolic plugs $(S_1,f_1,K_1),(S_2,f_2,K_2)$ transversally along $\phi$, by identifying an exit fundamental annulus of $S_1$ with an entry fundamental annulus of $S_2$. By definition, we get a new diffeomorphism $F$ acting on $\widehat{S}:=S_1\sqcup S_2/ \phi$ that is equal to $f_1$ on $S_1$ and to $f_2$ on $S_2$, therefore the invariant manifolds of $K_1,K_2$ remained intact and even more $W^s(K_2)$ extended to $S_1$ and $W^u(K_1)$ to $S_2$. Let us now prove that $(\widehat{S}, F)$ is a hyperbolic plug.}
\par{Indeed it is easy to see that it is a plug. Let us now determine which is the maximal invariant set of $(\widehat{S}, F)$. The set $\cap_{n\geq 0} f^n( \widehat{S})$ consists of all points that remain for all their past iterations inside $\widehat{S}$ therefore $\cap_{n\geq 0} f^n( \widehat{S})= W^u(K_1 \cup K_2)$. Similarly, $\cap_{n \leq 0} f^n( \widehat{S})= W^s(K_1 \cup K_2)$. Hence, $$\cap_{n \in \mathbb{Z}} f^n( \widehat{S})= K_1 \sqcup K_2 \sqcup (W^s(K_2)\cap W^u(K_1))$$}
\par{Let us now prove that this set is hyperbolic. Indeed, every point of $W^s(K_2)\cap W^u(K_1)$ crosses at least once the region where we identified the two plugs. Therefore, if the gluing is transverse, the laminations $W^s(K_2)$ and $W^u(K_1)$ are everywhere transverse. Next, it is trivial to see that $K_1\sqcup K_2$ remained a hyperbolic set. Also, for every point $x \in W^s(K_2)\cap W^u(K_1)$ take $E^s_x= T_xW^s(K_2)\subset T_x\widehat{S}$ and $E^u_x= T_xW^u(K_1)$. By transversality, $E^s_x\oplus E^u_x = T_x\widehat{S}$. If $x$ is close to $K_1$, then $E^u_x$ is a repelling direction. Similarly, if $x$ is close to $K_2$, then $E^s_x$ is a contracting direction. The Lemma 1.2.5 in \cite{Bonatti} states that $K_1 \sqcup K_2 \sqcup (W^s(K_2)\cap W^u(K_1))$ is compact, therefore by changing the constant of hyperbolicity defined in definition \ref{definitionhyperbolicset} the direction $E^s_x$ is contracting for every point $x \in K_1 \sqcup K_2 \sqcup (W^s(K_2)\cap W^u(K_1))$. Similarly $E^u_x$ is repelling. The only thing remaining to show is that $E^s_x$ depends continuously on $x$. This is essentially based on the $\lambda$-lemma and a more detailed proof may be found in \cite{Robbin} (proven for $C^2$ diffeomorphisms) or \cite{Robinson} (proven in the general case).}
\end{proof}
\begin{proof}[Smale's question]
Take a partial order $(F,<)$ on a finite set and represent it by a non-oriented graph, as in the following example. For the sake of simplicity, we will not add in the graph the edges that originate from the transitivity property of the partial order.
\begin{figure}[h!]
\begin{center}
\includegraphics[scale=0.08]{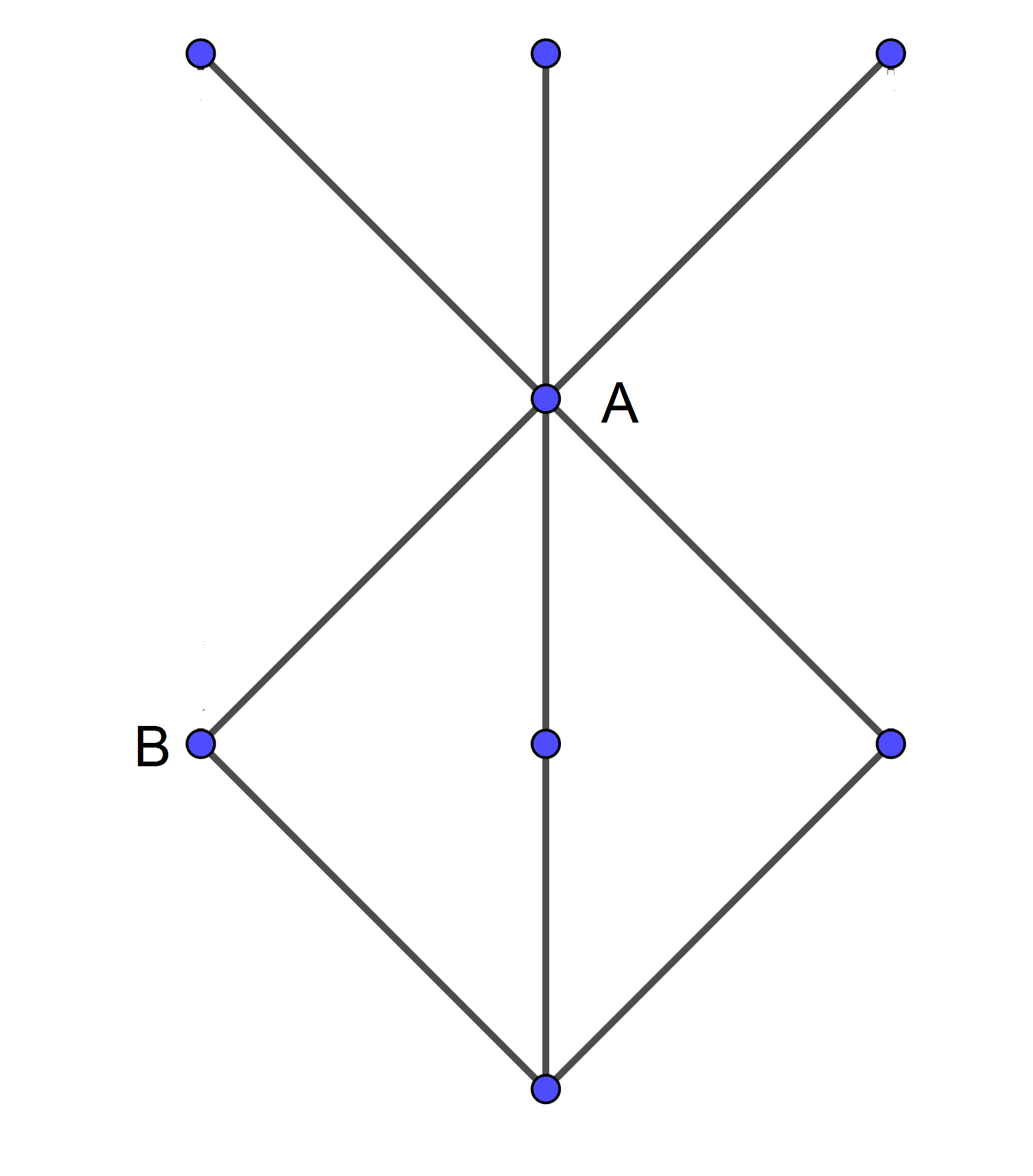}
\caption{The saddle $A$ has 3 children and 3 ancestors, the saddle $B$ has one child and 1 ancestor}
\label{order}
\end{center}
\end{figure}
\par{By convention, we'll assume that the highest level of points corresponds to hyperbolic repellers, the second highest to saddles of \emph{first generation} (i.e. hyperbolic pieces of type saddle whose stable manifolds don't intersect the unstable manifold of any other saddle-type piece), the third highest to \emph{second generation saddles} (i.e. saddles whose stable manifolds intersect the unstable manifold of hyperbolic repellers or first generation saddles) and so on. If the vertices $v>v'$ are connected by an edge, we'll say that $v$ is an \emph{ancestor} of $v'$ and $v'$ is a \emph{child} of $v$. In this article, the previous conventions will apply for every graph associated to a partial order.}
\par{For every hyperbolic piece in the graph of $(F,<)$ with $n$ ancestors and $m$ children, consider a hyperbolic transitive plug with $n$ useful entry components and $m$ useful exit components. We proved in step 1 that such plug exists. Next, let's glue together all the saddle-type pieces related by the partial order that we want to realise. Take for instance the example in figure \ref{order} and assume that we want to glue the plug associated to $A$ to the one of $B$. Consider an exit component $C$ of the plug corresponding to $A$ and an entry component $D$ of the plug corresponding to $B$. By step 2, we know that if we glue the two plugs along $\cal{A}(C)$ and $\cal{A}(D)$ transversally in a way such that $W^u(A)\cap W^s(B) \neq \emptyset$, then we construct a new hyperbolic plug, where $A \rightarrow B$. By step 3, since $W^u(A)$ and $W^s(B)$ are $C^{1,0}$ closed laminations of empty interior by the theorems \ref{regularityoflaminations} and \ref{strcutretheoremforhyperbolicsaturatedsets}, we obtain a diffeomorphism $\phi: \cal{A}(C) \rightarrow \cal{A}(D)$ that satisfies the previous properties. By perturbing eventually this diffeomorphism or the dynamics on each plug, we can assume that $\phi$ is a plug gluing. Therefore, by repeating this process for all saddle-type pieces, we can construct a big plug containing every saddle-type piece, whose Smale order is compatible with the partial order initially chosen, and inside which the transversality condition is satisfied. }
\par{Finally, we add the hyperbolic attractors and repellers. Now, a transverse gluing is not always possible, because the laminations of a hyperbolic non-trivial attractor or repeller have non-empty interior. Indeed, the theorem \ref{nontrivial} implies that a non-trivial attracting plug can never be transversally glued to a non-trivial repelling plug or a non-trivial plug of saddle type. This is why, instead of asking for transversality, we're going to ask that the axiom B be satisfied. }
\par{More specifically, assume that we want to glue the attracting non-trivial transitive hyperbolic plug associated to a minimal element $\omega$ of $(F,<)$ to the previously constructed saddle-type plug containing all saddle-type pieces. Let's name $A'$ the first plug and $B'$ the latter. Consider $C$ an entrance boundary component of $A'$. By construction of our transitive hyperbolic plugs in Step 1 and by our discussion in section 2.3, we know that the fundamental lamination of $C$ is of non-empty interior and contains two free stable separatrices coming from the two new fixed saddle points that appeared after the DA perturbation (see figure \ref{DA map}). From the other side, take an exit component $D$ of $B'$ belonging to a smaller plug in $B'$ (by construction $B'$ is a union of transitive plugs glued along their boundaries) associated to a basic piece $s$ bigger than $\omega$. It's easy to define a gluing of the two plugs, where the free separatrices of $\omega$ in $C$ are transverse at least at one point to some unstable manifold of $s$ in $D$. This gluing in most of the cases isn't going to satisfy the strong transversality condition, but it will satisfy the Axiom B. Therefore, after repeating this process for all entrance boundary components of $A'$ and for all attracting and repelling plugs, we obtain a system satisfying the Axiom A and B, so by theorem \ref{AxiomBnocycles}, it also satisfies the "no-cycle" condition and by theorem \ref{Omegastabilitytheorem} it defines an $\Omega$-stable system with the prescribed Smale order.}
\end{proof}

Let us make some remarks about our previous proof. We prove in the previous construction that the only obstruction for realising a general partial order on a finite set as a Smale order of a stable diffeomorphism is the gluing of the plugs at the levels of the hyperbolic attractors and repellers. But then, why can't we use trivial attracting and repelling plugs instead of non-trivial ones? 
\par{A trivial hyperbolic plug containing only a fixed attractive point has only one entrance component, therefore if two saddle type plugs $S_1,S_2$ must be glued to a trivial one, one of them, say $S_1$ must be glued to the trivial one and the other, $S_2$ must be glued to $S_1$. But what if $S_2$ and $S_1$ aren't related by the Smale order? Is such a gluing always possible? The answer to this question is negative. However, if the order that we want to realise is total, the previous problem does no longer exist and an adaptation of the previous method can be used to construct stable systems realising any total order. In order to produce stable systems that realise more complex partial orders, we need to be able to better control the exit and entry lamination of every plug. This is why we chose to present a different approach to this problem by using the notion of the domain of a hyperbolic piece more actively. }
\section{Proof of theorem \ref{Anosovcase}, the Anosov case}
Before moving on to the proof of the stable diffeomorphism case, let us prove that the arguments given in the previous section apply in the same exact way to the case of Anosov flows in dimension 3. Let us remind some basic definitions and results from \cite{Yu}. 
\begin{definition}
We call \emph{plug} any pair $(V,X)$, where $V$ is a 3 compact orientable manifold with boundary and $X$ a flow on $V$ transverse to its boundary. 
\end{definition}
Similarly to the previous section, the flow $X^t$ perhaps isn't defined for every $t \in \mathbb{R}$ and in the exact same way we can define the entrance (exit, useful) boundary and the hyperbolic (transitive, attracting, repelling) plug. In this section, we'll also make the same assumption as in the previous one, that every hyperbolic plug has been constructed with a filtrating neighbourhood of the corresponding hyperbolic set. Furthermore, we'll assume that our plugs have only useful boundary components.
\begin{definition}
If $(V,X,K)$ is a hyperbolic plug, then the restriction of $W^s(K)$ (resp. $W^u(K)$) to the entrance (resp. exit) boundary will be called the \emph{entry} (resp. \emph{exit}) \emph{lamination}. 
\end{definition}
\begin{definition}
We'll say that the entrance lamination is \emph{filling} if it can be extended to a foliation of the boundary. Same for the exit lamination.
\end{definition} 
\begin{definition}
We'll say that the entrance lamination is an \emph{MS-lamination} if
\begin{enumerate}
\item It has a finite number of compact leaves 
\item Every non-compact half leaf is asymptotic to a compact leaf
\item Each compact leaf can be oriented in such a way so that its holonomy is a contraction
\end{enumerate}
\end{definition}
\begin{definition}
Let $S$ be a closed surface and $L$ be a 1-dimensional lamination with finitely many compact leaves, and $C$ be a connected components of $S - L$. We call $C$ a \emph{strip} if it satisfies the two following properties:
\begin{enumerate}
\item[•]$C$ is homeomorphic to $\mathbb{R}^2$
\item[•] the accessible boundary of $C$ consists of exactly two non-compact leaves of $L$ which are asymptotic to each other at both ends
\end{enumerate}
Otherwise, we say that $C$ is an \emph{exceptional component} of $S- L$.
\end{definition}
\begin{definition}
Take $S$ an entrance boundary component of a plug $(V,X,K)$. We'll call the entrance lamination associated to $S$ an \emph{MS-filling lamination} if it's an MS-lamination whose complement in $S$ has no exceptional components. Same for the exit lamination.
\end{definition}
\begin{lemme}[Lemma 2.20 of \cite{Yu}]
If the entrance lamination of a hyperbolic plug $(V,X,K)$ is an MS-filling lamination then this is also the case for the exit lamination. Furthermore, an MS-filling lamination is filling, therefore an MS-filling lamination can only exist on the torus or the Klein bottle.
\end{lemme}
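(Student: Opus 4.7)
The statement splits into three assertions: (a) an MS-filling lamination is filling; (b) a filling lamination can only live on a torus or Klein bottle; (c) if the entry lamination is MS-filling then so is the exit lamination. My plan is to dispatch (a) and (b) first as soft topological arguments, and then to concentrate on (c), which is the real content.

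For (a), let $L$ be an MS-filling lamination on a closed surface $\Sigma$. By definition each connected component $C$ of $\Sigma\setminus L$ is a strip: $C$ is homeomorphic to $\mathbb{R}^2$ and its accessible boundary consists of exactly two non-compact leaves of $L$ that are asymptotic to each other at both ends. In the planar model one foliates $C$ by curves parallel to the two boundary leaves; these glue continuously to $L$ at infinity and produce a non-singular one-dimensional foliation of $\Sigma$, so $L$ is filling. For (b), any closed surface admitting such a foliation has vanishing Euler characteristic by Poincar\'e--Hopf, hence $\Sigma$ is the torus or the Klein bottle.

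For (c), let $\Sigma^{in}$ be an entry component carrying the MS-filling lamination $\Lambda^{in}=W^s(K)\cap \Sigma^{in}$, and let $\Sigma^{out}$ be an exit component carrying $\Lambda^{out}=W^u(K)\cap \Sigma^{out}$. I would exploit the duality between entry and exit via the periodic orbits of $K$. A compact leaf of $\Lambda^{in}$ is a circle contained in $W^s(\gamma)$ for some periodic orbit $\gamma\subset K$, because the hyperbolic contraction along $W^s$ forces every such loop to converge to a single periodic orbit. The corresponding unstable manifolds $W^u(\gamma)$ cut $\Sigma^{out}$ in circles, yielding finitely many compact leaves of $\Lambda^{out}$. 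The contracting holonomy axiom on the exit side is then dual to the expanding holonomy of $W^u(\gamma)$ read in the reverse orientation. For the asymptotic axiom, I would argue that every non-compact half-leaf of $\Lambda^{out}$ lies in $W^u(x)$ for some non-periodic $x\in K$; by the local product structure of $K$ and the finiteness of compact stable leaves in $\Lambda^{in}$, the backward orbit of $x$ shadows some periodic orbit $\gamma$, and the $W^u(\gamma)$-circle on $\Sigma^{out}$ is the compact leaf asymptotic to the half-leaf.

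The hardest point, and the main obstacle of this plan, is ruling out exceptional components of $\Sigma^{out}\setminus \Lambda^{out}$. I would argue by contradiction: an exceptional component of $\Lambda^{out}$, saturated through the local product structure between $W^s(K)$ and $W^u(K)$ inside the plug, should produce an exceptional component of $\Sigma^{in}\setminus \Lambda^{in}$, contradicting the MS-filling hypothesis on the entry side. Making this mirror argument precise is the genuine technical difficulty: one must track how the two transverse laminations of $K$ interact globally inside $V$, and verify that a strip on one side cannot correspond via the flow and the product structure to anything other than a strip on the other side. Here I expect Markov partitions of $K$ and a careful bookkeeping of the finitely many ``boundary'' periodic orbits to be the right tools.
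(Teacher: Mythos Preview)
The paper does not prove this lemma at all: it is quoted verbatim as Lemma~2.20 of \cite{Yu} and used as a black box in the discussion of Anosov flows. So there is no ``paper's own proof'' to compare your attempt against; any comparison would have to be with the original argument in B\'eguin--Bonatti--Yu.

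That said, your parts (a) and (b) are fine. For (c), your instinct to exploit a duality between entrance and exit is correct, but the organising principle you are groping for is the \emph{crossing map}: the flow of $X$ induces a diffeomorphism from $\Sigma^{in}\setminus\Lambda^{in}$ onto $\Sigma^{out}\setminus\Lambda^{out}$ (a point not on $W^s(K)$ must eventually leave the plug, and conversely). This single map already gives a bijection between the complementary components on the two sides, and the heart of the argument is to show that it takes strips to strips and matches up the compact boundary leaves correctly. Your periodic-orbit/shadowing picture is not wrong, but it is less efficient and makes the ``mirror'' step for exceptional components harder to pin down than it needs to be; once you have the crossing map, that step becomes a direct transport of the combinatorics rather than an indirect contradiction through the local product structure. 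Your acknowledged obstacle---ruling out exceptional components on the exit side---is exactly where the crossing map does the work cleanly.
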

Also, as in the previous section one can show that the transverse gluing of two plugs is a plug.
\begin{proposition}[Proposition 1.1 of \cite{Yu}]\label{gluingplugs}
Let $(U,X,K)$ and $(V,Y,L)$ be two distinct hyperbolic plugs. Let $C$ be an exit component of $U$ and $D$ an entry component of $V$. Assume that there exists a diffeomorphism $\phi: C \rightarrow D$ such that $\phi(W^u(K))$ is transverse to the lamination $W^s(L)$. Let $Z$ be the vector field induced by $X$ and $Y$ on the manifold $W := U \sqcup V / \phi$. Then $(W,Z)$ is a hyperbolic plug. Furthermore, if the laminations are MS-filling and $W$ is closed, then $Z$ is an Anosov flow.
\end{proposition}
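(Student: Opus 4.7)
The plan is to mirror the argument of Step 2 of Section 3, now adapted to the continuous-time setting. First, I would verify that $(W,Z)$ is a plug: outside a small neighbourhood of the glued annulus the vector fields $X,Y$ are unchanged, and by the plug-gluing hypothesis (near each boundary component the flow is smoothly a translation, and $\phi$ conjugates these translations) they patch to a $C^1$ vector field $Z$ on $W$ which is still transverse to every remaining boundary component.

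Next, I would identify the maximal invariant set. A point $x\in W$ has its forward orbit defined and contained in $W$ for all positive time iff that orbit accumulates on $K\cup L$, so the set of such $x$ is exactly $W^s(K\cup L)$; symmetrically the set of $x$ with full backward orbit in $W$ is $W^u(K\cup L)$. Since orbits can only cross the gluing from $U$ into $V$ and not the reverse, intersecting gives
\[
\bigcap_{t\in\mathbb{R}} Z^{t}(W) \;=\; K \,\sqcup\, L \,\sqcup\, \bigl(W^u(K)\cap W^s(L)\bigr).
\]

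For hyperbolicity of this set I proceed exactly as in Step 2. On $K$ and $L$ the hyperbolic splittings are given. For $x\in W^u(K)\cap W^s(L)$ the weak invariant manifolds $W^s(L)$ and $W^u(K)$ are two-dimensional and both contain the flow line $\mathbb{R}Z(x)$; by hypothesis their traces on $D$ are transverse, and this transversality propagates along flow orbits. I define $E^s_x\subset T_xW^s(L)$ and $E^u_x\subset T_xW^u(K)$ to be the one-dimensional subspaces transverse to $\mathbb{R}Z(x)$, yielding $E^s_x\oplus\mathbb{R}Z(x)\oplus E^u_x=T_xW$. Contraction on $E^s$ and expansion on $E^u$ are inherited from the hyperbolic structures of $K$ and $L$ near those sets; uniformity over the whole invariant set follows from its compactness (the flow analogue of Lemma~1.2.5 of \cite{Bonatti}) after rescaling the hyperbolicity constants, and continuity of the splitting follows from the $\lambda$-lemma (as in \cite{Robbin,Robinson}).

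For the last assertion, if $W$ is closed and the entry/exit laminations are MS-filling, showing $Z$ is Anosov amounts to showing that the maximal invariant set above is all of $W$. The MS-filling hypothesis is the crucial input: each lamination extends to a foliation of the glued torus, and every complementary strip is bounded by two non-compact leaves that are asymptotic at both ends. Combined with the transversality of $\phi(W^u(K))$ with $W^s(L)$ on $D$ and the absence of exceptional complementary components, this forces every orbit crossing a glued torus either already to lie in $W^u(K)\cap W^s(L)$ or to be trapped, in finite forward time, onto $W^s(L)$ (and symmetrically backward). Ruling out orbits that could shuttle indefinitely between $U$ and $V$ without ever being captured by the laminations is the main obstacle; it is precisely what the finiteness of compact leaves, the asymptotic behaviour of non-compact half-leaves, and the absence of exceptional components in the MS-filling definition are designed to prevent, and is the only step that goes beyond a direct translation of the discrete argument.
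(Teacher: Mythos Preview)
Your treatment of the first assertion (that $(W,Z)$ is a hyperbolic plug) follows the paper exactly: the paper explicitly says ``the idea behind the first part is exactly the same as the one we described in the previous section'', and your identification of the maximal invariant set and the construction of the splitting on it are the flow analogue of Step~2.

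The gap is in the Anosov part. You write that ``showing $Z$ is Anosov amounts to showing that the maximal invariant set above is all of $W$'', meaning that $K\sqcup L\sqcup(W^u(K)\cap W^s(L))=W$. This is false in general, and the paper does not argue this way. Think of the simplest case: $U$ an attracting plug with a single entry torus $C$ and $V$ a repelling plug with a single exit torus $D$. After gluing, $W$ is closed, but $W^u(K)$ and $W^s(L)$ are genuine two-dimensional \emph{laminations} with non-empty complement; points in the complementary strips on the glued torus generate orbits that lie neither in $W^u(K)$ nor in $W^s(L)$. Your proposed hyperbolic set misses all of these orbits, so you have no splitting there, and no amount of ``ruling out shuttling orbits'' repairs this, because such orbits really exist.

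The paper's route is different in kind: one uses the MS-filling hypothesis to \emph{extend} the exit and entry laminations to genuine transverse foliations of the glued torus, then pushes these foliations by the flow to obtain two transverse codimension-one foliations on all of $W$. The leaves added in the extension correspond exactly to the strips; the hyperbolic behaviour along these new leaves is not inherited from $K$ or $L$ but comes from the MS condition itself, namely that each strip accumulates at both ends on compact leaves whose holonomy is a contraction. This contraction is what gives exponential behaviour on the added leaves. So the MS-filling hypothesis is used to \emph{build} the splitting on the complement of $K\sqcup L\sqcup(W^u(K)\cap W^s(L))$, not to show that complement is empty.
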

The idea behind the first part is exactly the same as the one we described in the previous section. For the second part of the statement, in order to prove that $Z$ is Anosov, we extend the laminations to two transverse foliations and we push by the flow those foliations in order to define a foliation on $W$. The hyperbolic behaviour of the new leaves is assured by the fact that every new leaf belongs to a strip, that accumulates in both directions to a compact leaf, whose holonomy is a contraction. 
\par{We now have an equivalent of step 2 of the previous section. In order to prove the theorem \ref{Anosovcase} we'll also need an equivalent of the notion of a DA map. By perturbing a little bit the flow and linearising in a $C^1$ way, we can define a DA attracting and repelling map, in the exact same way as in section 2.3 and prove the following proposition:}
\begin{proposition}[Proposition 7.1 of \cite{Yu}]\label{DAflow}
Let $(U,X,K)$ a hyperbolic plug of type saddle. Apply the DA repelling map to one periodic orbit $p$ (the orbit $p$ becomes repelling) with positive multipliers and no free separatrix. Let's name $X'$ the flow after the DA perturbation. We have the following results:
\begin{enumerate}
\item The couple $(U,X')$ is a hyperbolic plug.  
\item  Remove a small open tubular neighbourhood $S$ of the orbit $p$ and name $T$ the new boundary that appears in $U$. We have that $T$ is a torus and $(U-S,X')$ is a hyperbolic plug. Let's denote by $K'$ its associated hyperbolic set. 
\item $W^{s}(K') \cap \partial U$ is topologically equivalent to $W^s(K)\cap \partial U$. Same for $W^u(K')$
\item $W^s(K') \cap T$ is an MS-filling lamination diffeomorphic to the one drawn in figure \ref{laminationsda}
\end{enumerate}
\begin{figure}[h!]
\begin{center}
\includegraphics[scale=0.5]{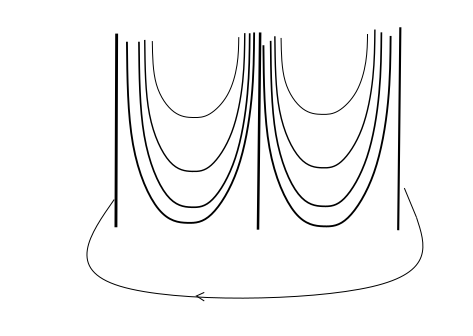}
\caption{}
\label{laminationsda}
\end{center}
\end{figure}
\end{proposition}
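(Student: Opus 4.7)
The plan is to reduce the DA construction for the flow $X$ to the two-dimensional DA map of Section~2.3 by working on a transverse disk to the orbit $p$. First I would perturb $X$ slightly and $C^1$-linearize it in a small tubular neighborhood of $p$, choose a $2$-disk $D$ transverse to $p$ with first-return map $f:D\to D$, and apply to the hyperbolic saddle fixed point $p\cap D$ of $f$ exactly the DA repelling perturbation of Section~2.3, supported in an arbitrarily small neighborhood of $p\cap D$ that we may assume disjoint from $\partial U$. Suspending this two-dimensional perturbation along the orbit $p$ and extending by the unperturbed field outside the tube produces the flow $X'$ on $U$.

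Items~(1)--(3) then follow by suspension of the corresponding two-dimensional statements. The maximal invariant set of $(U,X')$ is the suspension of the maximal invariant set of the perturbed return map; by Section~2.3 it consists of the old hyperbolic set (with points of $W^u(p)$ split in two), two new saddle periodic orbits $\sigma_1,\sigma_2$, and the new repelling orbit obtained from $p$, and hyperbolicity of this whole set follows from the cone-field argument used in Step~2 of the proof of Theorem~\ref{Smalequestion} (relying on \cite{Robbin},\cite{Robinson}), giving~(1). For~(2), the boundary $T$ of a small tubular neighborhood $S$ of the new repelling orbit is a $2$-torus (as the boundary of a solid-torus neighborhood of a periodic orbit), and since that orbit is repelling $X'$ is outward-transverse to $T$, making $T$ an entrance boundary of $U-S$ whose maximal invariant set is precisely $K'$. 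For~(3), the support of the perturbation is disjoint from $\partial U$, so both laminations are unchanged there outside the finitely many leaves that cross the perturbation tube, and along those leaves the modification is exactly the doubling of crossing points described in Section~2.3, yielding topological equivalence.

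Item~(4) is the heart of the proof. I would first identify the compact leaves of $W^s(K')\cap T$ as the connected components of $W^{ss}(\sigma_i)\cap T$ for $i=1,2$: each strong stable manifold $W^{ss}(\sigma_i)$ is a smooth surface transverse to the orbit direction, and since the strong stable multiplier at $\sigma_i$ is strictly less than one it spirals tightly around the new repelling orbit, meeting $T$ transversally in finitely many meridional closed curves whose holonomy is a strict contraction. Any non-compact leaf of $W^s(K')\cap T$ is a stable leaf of some point of $K$ that enters the band added by the DA, and following it along the flow inside the band forces it to accumulate at both ends on one of the compact leaves $W^{ss}(\sigma_i)\cap T$. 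The main obstacle is verifying the absence of exceptional components in $T-(W^s(K')\cap T)$, which rests crucially on the assumption that $p$ has no free separatrix: this hypothesis guarantees that the original stable lamination of $K$ already accumulates from every side onto the unstable separatrices of $p$ that become the boundary of the DA band, so that after perturbation the lamination on $T$ sweeps out all strips between successive compact leaves, yielding the MS-filling picture drawn in Figure~\ref{laminationsda}.
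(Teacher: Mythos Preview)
The paper does not actually prove this proposition: it is quoted verbatim as Proposition~7.1 of \cite{Yu}, and the only indication of a proof in the present paper is the sentence preceding the statement, namely that one perturbs and $C^1$-linearises the flow near $p$ and then applies the two-dimensional DA construction of Section~2.3. Your proposal follows exactly this route (transverse disk, first-return map, suspend the 2D DA), so at the level of strategy you are in complete agreement with what the paper suggests, and your sketch of items (1)--(3) is correct.

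Your treatment of item~(4), however, contains a geometric slip that you should correct. With positive multipliers the return map has no rotation, so the strong stable manifold $W^{ss}(\sigma_i)$ does not ``spiral tightly around the new repelling orbit'': in the transverse disk the stable separatrix of $\sigma_i$ heading toward $p$ is a simple arc meeting the small circle around $p$ in a single point, and its suspension meets $T$ in a single closed curve isotopic to the core of the solid torus (a longitude), not in ``finitely many meridional closed curves''. The two compact leaves of $W^s(K')\cap T$ are therefore two parallel longitudes, and the non-compact leaves fill the two complementary annuli as Reeb-type strips accumulating on them; this is the picture of Figure~\ref{laminationsda}. Once you adjust this description, your argument for the MS-filling property (contracting holonomy of the compact leaves, accumulation of non-compact leaves, and absence of exceptional components via the no-free-separatrix hypothesis) goes through as you wrote it.
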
  
By number 4 of the previous proposition, we can easily obtain the following
\begin{corollary}
Take $(U,X),(V,Y)$ two hyperbolic plugs. Apply the DA attracting map to a periodic orbit $p$ with no free separatrix and positive multipliers of $(U,X)$ and the DA repelling map to a periodic point $p'$ of $(V,Y)$ satisfying the same hypothesis. Remove a tubular neighbourhood of $p$ and $p'$, and name $T$ and $T'$ respectfully the two new boundaries in $U$ and $V$. We can always define a transverse gluing between the two plugs along $T$ and $T'$.
\end{corollary}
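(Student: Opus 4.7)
The plan is to reduce the problem to finding a diffeomorphism of the 2-torus that sends one MS-filling lamination to a lamination transverse to another, and then invoke Proposition~\ref{gluingplugs}. By point 4 of Proposition~\ref{DAflow} applied to $(U,X)$ (in its attracting form) and to $(V,Y)$, the new boundary components $T$ and $T'$ are both 2-tori, and the restrictions $W^u(K')\cap T$ and $W^s(L')\cap T'$ are MS-filling laminations of the specific shape depicted in Figure~\ref{laminationsda}. So the statement reduces to the following claim: given any two MS-filling laminations $\mathcal{L}_1,\mathcal{L}_2$ on the torus (of this explicit form), there exists a diffeomorphism $\phi:T\to T'$ such that $\phi(\mathcal{L}_1)$ is everywhere transverse to $\mathcal{L}_2$.

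To prove this claim, I would first look at the compact leaves. An MS-filling lamination on $T^2$ has only finitely many compact leaves, and all of them are parallel, lying in a single primitive homology class $[\gamma_i]\in H_1(T^2;\mathbb{Z})$ for $i=1,2$. The mapping class group of $T^2$ is $SL(2,\mathbb{Z})$, and it acts transitively on primitive classes; in particular I can pick an orientation-preserving diffeomorphism $\phi_0:T\to T'$ that sends $[\gamma_1]$ to any prescribed primitive class, in particular one distinct from $[\gamma_2]$. After an isotopy I may moreover assume that $\phi_0$ sends the (finitely many) compact leaves of $\mathcal{L}_1$ to simple closed curves meeting each compact leaf of $\mathcal{L}_2$ transversely in a non-zero number of points.

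It remains to upgrade this transversality of the compact parts to transversality of the full laminations. Here I would use the defining property of MS-laminations: each non-compact leaf is asymptotic to a compact leaf at both ends, and the complement of the compact leaves is a disjoint union of strips. Near each compact leaf the lamination has a product-like structure (after trivialising via the contracting holonomy), so once $\phi_0$ is transverse on the compact leaves, $\phi_0(\mathcal{L}_1)$ remains transverse to $\mathcal{L}_2$ in a neighbourhood of every compact leaf of $\mathcal{L}_2$. Outside such neighbourhoods only finitely many compact pieces of strips remain to be checked, and a small compactly supported $C^1$-perturbation of $\phi_0$ in each strip is enough to achieve transversality there, because transversality of a pair of $C^{1,0}$ laminations on a compact set is a $C^1$-open and $C^1$-generic condition on the gluing diffeomorphism.

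Once such a $\phi$ is produced, Proposition~\ref{gluingplugs} immediately concludes: the transverse gluing of the two plugs along $T$ and $T'$ is a hyperbolic plug, which is what the corollary asserts. The main obstacle, and the only non-formal step, is the very first one: choosing the mapping class of $\phi$ so that the homotopy classes of the compact leaves of $\phi(\mathcal{L}_1)$ and $\mathcal{L}_2$ are distinct. Once this topological choice is made, the rest is a standard genericity argument using the MS-filling structure.
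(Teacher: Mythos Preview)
Your proof is correct, but it works harder than necessary. The paper's argument is essentially one line: by point~4 of Proposition~\ref{DAflow}, the boundary laminations on $T$ and $T'$ are not merely MS-filling but are both \emph{diffeomorphic to the same explicit model} drawn in Figure~\ref{laminationsda}. Hence the question reduces to exhibiting a single diffeomorphism of $T^2$ carrying that specific lamination to one transverse to itself, which is immediate from the picture (a quarter-turn, say). You instead prove the more general statement that any two MS-filling laminations on the torus admit a transverse gluing, via an $SL(2,\mathbb{Z})$ move on the homotopy class of the compact leaves followed by a genericity/perturbation argument on the strips. That is valid and buys you a result independent of the particular DA model, but the paper exploits the concreteness of the DA lamination to bypass the perturbation step entirely. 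One minor remark: invoking Proposition~\ref{gluingplugs} at the end is superfluous, since the corollary only claims the existence of the transverse gluing map, not the hyperbolicity of the glued plug.
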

Finally, the proof of the theorem \ref{Anosovcase} is obtained by repeating the steps 1, 2, 3 defined in the previous section. The proof of step 1 relies on the fact that we can apply, similarly to the previous section, the DA attracting and repelling map to a transitive Anosov flow on a closed 3-manifold. Step 2 is replaced here  by proposition \ref{gluingplugs} and finally step 3 is replaced by proposition \ref{DAflow} and the above corollary.

\section{Proof of theorem \ref{Stablecase}, the stable case}
\subsection*{The connectivity condition}
Take a structurally stable orientation-preserving diffeomorphism $f$ acting on a closed oriented surface $S$. Suppose that $f$ has trivial hyperbolic attractors and repellers. Consider an attractive periodic point $p$ of $f$. By the theorem \ref{unglue}, we know that the surface can be seen as the result of a gluing of domains of hyperbolic saturated sets. However, those hyperbolic sets aren't necessarily basic pieces. This is because the domains of two basic pieces related by the Smale order cannot be disjoint, because of theorem \ref{domaintheorem}. Nevertheless, close to the point $p$ we'll see a cyclically ordered set of domains (see figure \ref{domainsaroundfixedpoint}), whose union contains all saddle-type basic pieces related to $p$.
\begin{figure}[h!]
\begin{center}
\includegraphics[scale=0.7]{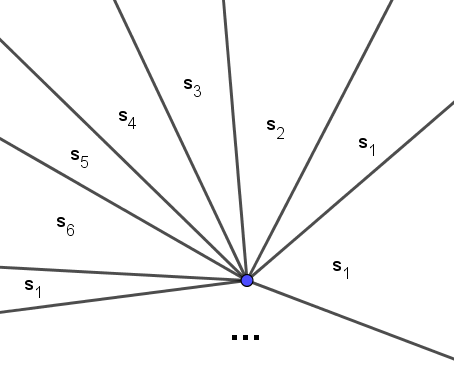}
\caption{}
\label{domainsaroundfixedpoint}
\end{center}
\end{figure}
\par{When applying $f$ to the previous image, the same cyclic order is obtained around $f(p)$. When applying $f^m$ to the previous image, where $m$ is the period of $p$, the image is either fixed or rotated. For instance, if the cyclic order is of the form $s_1,s_2,s_1,s_2$, applying $f^m$ can either send the first occurrence of $s_1$ to itself or to its second occurrence. If the image is rotated, the cyclic order is periodic as in the previous example. Again, by taking $f^{nm}$ for some $n \in \mathbb{N}$, the cyclic order around $p$ is fixed and the boundary lines of every domain are preserved. Let us now make some remarks.}
\begin{enumerate}
\item[•] A domain of a saturated hyperbolic set can indeed appear around an attractive or repelling periodic point many times.
\item[•] In figure \ref{domainsaroundfixedpoint}, the domain of $s_1$ appears two consecutive times in the cyclic order. This is also possible and it means that when applying theorem \ref{unglue} in order to create $f$, we need to glue together two boundary lines of the domain of $s_1$.
\item[•] Every saddle type basic piece related to $p$ must appear in exactly one of the domains surrounding $p$.
\item[•] In the above figure, the domains $s_1$ and $s_2$ are glued together and that means by theorem \ref{domaintheorem} that close to the line of intersection, $f^{nm}$ acts as a translation going from a unique hyperbolic repeller to our attracting point $p$. Therefore, inside the hyperbolic sets of $s_1$ and $s_2$, there exist two basic pieces (one in $s_1$ and one in $s_2$) that have a common ancestor in the Smale order. 
\item[•] For every domain intersecting $p$ there exists a basic piece inside the domain bigger than $p$ in the Smale order. However, not every basic piece inside the domain is bigger than $p$.
\end{enumerate}
\par{Let us now look more closely inside the domains $s_i$. We can assume without any loss of generality that the domains $s_i$ cannot be written as union of two disjoint domains glued along their boundaries. Therefore, by number 7 of the theorem \ref{domaintheorem}, if we represent the Smale order of $f$ as a graph, the subgraph of all the (saddle-type) basic pieces inside any $s_i$ is connected. Furthermore, by our previous remarks, for any two domains $s_i$ and $s_j$ that have been glued along their boundaries, there exist basic pieces $S_i \in s_i$, $S_j \in s_j$ and a repelling periodic point $a$, such that $a\rightarrow S_i,S_j \rightarrow p $. We'll show in the next section that the subgraph of all the basic sets bigger than $p$ is connected. This condition is a necessary condition that the Smale order of any structurally stable diffeomorphism must satisfy and will be called the \emph{connectivity condition}.}
\begin{definition}
Take a partial order on a finite set $(F,<)$ and represent it by a (non-oriented) graph $G$. We'll say that the partial order satisfies the connectivity condition, if for every maximal element $A$ the subgraph of $G$ of all the smaller elements than $A$ is connected and if for every minimal element $B$ the subgraph of $G$ of all the bigger elements than $B$ is connected.  
\end{definition}
\par{Assuming the necessity of the connectivity condition, we can construct a partial order that cannot be realised by a diffeomorphism with trivial hyperbolic attractors and repellers. For instance, consider the partial order given by the graph in figure \ref{impossibleorder}. The same conventions that we used in section 3 apply to this graph.}
\begin{figure}[h!]
\begin{center}
\includegraphics[scale=0.6]{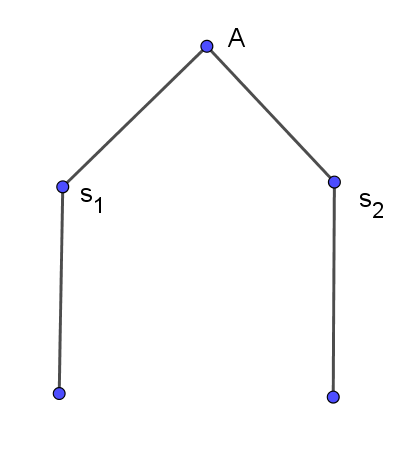}
\caption{}
\label{impossibleorder}
\end{center}
\end{figure}
\par{For the example in figure \ref{impossibleorder} it's quite easy to explain why the connectivity condition is necessary. If the unique hyperbolic repeller $A$ were a fixed point (being periodic would change nothing to our arguments), the domains of the saddle-type pieces $s_1$ and $s_2$ would intersect around $A$ somewhere, but the two pieces don't have a common descendant, therefore there couldn't be any translation band around the intersection of the two domains, which is absurd.}
\par{Let us remark that if $A$ were a non-trivial repeller, then using the methods described in section 3, it wouldn't be hard to show that the order in figure \ref{impossibleorder} is realisable by a stable diffeomorphism.}
\subsection*{The connectivity condition is necessary}
Take a stable orientation-preserving diffeomorphism $f$ acting on a closed oriented surface $S$ with trivial attractors and repellers. In this section, we'll prove that the Smale order of $f$ satisfies the connectivity condition. In order to do so, we'll prove the two following lemmas:
\begin{lemme}\label{lemma1}
Take $p$ an attractive periodic point of $f$. Fix $s$ a hyperbolic set that doesn't contain any attractors or repellers, whose extended domain intersects $p$ and cannot be written as union of two domains glued along their boundaries. For every basic piece $s_i \subset s$ bigger than $p$ in the Smale order of $f$, there exists a basic piece $s_j \subset s$ satisfying $s_i \rightarrow s_j \rightarrow p $ such that $s_j$ contains a periodic point with a free unstable separatrix (i.e. the separatrix doesn't intersect any stable manifold of $s$) intersecting only the basin of attraction of $p$.
\end{lemme}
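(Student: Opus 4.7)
The strategy is to take any point $z\in W^u(s_i)\cap W^s(p)$ (which exists because $s_i>p$ and $\to$ is transitive under Axioms A and B), follow its unstable leaf backward until it first meets $\partial W^s(p)$, and show that the exit point gives the required piece. Let $L$ be the leaf of $W^u(s_i)$ through $z$ and $L_0$ the connected component of $L\cap W^s(p)$ containing $z$. Since $L$ accumulates on $s_i$ in backward time and $s_i$ admits a neighbourhood disjoint from $W^s(p)$, $L_0$ is bounded backward inside $L$, so it has a backward endpoint $z_0\in L\cap\partial W^s(p)$.

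I would then identify $z_0$ with the stable manifold of a saddle of $s$. The submanifold $\tilde\Delta(s)$ is $f$-invariant, and the basic pieces of the extended dynamics lying inside it are exactly the saddle pieces of $s$ together with the attracting and repelling periodic points added at the boundary. As $z_0\in\partial W^s(p)$, its forward orbit cannot go to an attractor (disjointness of basins) nor to a repeller (sources do not attract forward orbits), so it accumulates on some saddle $\tilde q$ of a basic piece $s_j\subset s$. Thus $z_0\in W^u(s_i)\cap W^s(\tilde q)$, which yields $s_i\to s_j$.

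Finally, I would extract the desired periodic point inside $s_j$. Near $z_0$ the local leaf $W^s(\tilde q)$ separates $\tilde\Delta(s)$ into two sides, and the side containing $L_0$ is in $W^s(p)$, hence meets no leaf of $W^s(s)$; this makes $\tilde q$ an s-boundary. Items 4 and 5 of theorem \ref{th-sbords} then supply a periodic s-boundary $q_*\in s_j$ with $\tilde q\in W^s(q_*)$ and a free unstable separatrix, and since $f$ transports the no-$W^s(s)$ side consistently along the orbit $f^n(\tilde q)\to q_*$, this free separatrix lies on the $W^s(p)$ side of $W^s(q_*)$. Hence the separatrix is contained in $W^s(p)$ and meets no stable manifold of $s$, and $s_j\to p$ comes for free. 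The main technical point is precisely this transport of ``sides'': one must show that the s-boundary property together with the information about which side carries $W^s(p)$ is preserved by $f$ and passes down to the periodic point of $s_j$ on which $\tilde q$'s orbit accumulates. This is where item 1 of theorem \ref{th-sbords} and the $f$-equivariance of the laminations of $s$ do the essential work.
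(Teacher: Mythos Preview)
Your argument follows the same skeleton as the paper's: take an unstable leaf of $s_i$ meeting $W^s(p)$, locate a boundary point of the intersection, recognise it as lying on the stable manifold of an s-boundary, and pass (via theorem~\ref{th-sbords}) to a periodic s-boundary with a free unstable separatrix. The paper parametrises the leaf as $e:\mathbb{R}\to S$, picks a maximal open interval $(a,b)$ with $e((a,b))\subset W^s(p)$, and obtains two such periodic points $p_1,p_2$ from the endpoints $e(a),e(b)$. The one substantive difference is precisely the step you flag as the ``main technical point'': rather than transporting sides along the orbit $f^n(\tilde q)\to q_*$, the paper iterates the whole arc $e((a,b))$ forward and cites Proposition~2.5.2 of \cite{Bonatti} to obtain the Hausdorff convergence $f^n(e((a,b)))\to\{p_1,p_2,p\}\cup W^{u,+}(p_1)\cup W^{u,+}(p_2)$, which places the free separatrices directly in $\overline{W^s(p)}$ and sidesteps the side-tracking issue entirely. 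A minor correction: the items of theorem~\ref{th-sbords} you need are 2 and 5, not 4 and 5 (item~4 is the converse implication).
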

Using the previous lemma, we know that there exist free separatrices crossing the stable manifold of every attracting periodic point. By 2 and 3 of theorem \ref{th-sbords}, the number of free separatrices attracted by any periodic point is finite. Also, it's not difficult to see that for every free separatrix $w$ intersecting the basin of attraction of an attracting point $p$, the set $w \cup \lbrace p \rbrace$ is a continuously embedded arc in $S$. Therefore, we can naturally cyclically order the separatrices arriving at any attractive periodic point.
\begin{lemme}\label{lemma2}
If one free separatrix issued from the basic piece $s$ comes in the above cyclic order right after one free separatrix issued from the basic piece $s'$ then there exists a repelling periodic point $\alpha$ bigger than $s$ and $s'$ in the Smale order of $f$.
\end{lemme}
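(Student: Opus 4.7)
The plan is to identify, for the two consecutive free separatrices $w$ (from $s$) and $w'$ (from $s'$), a single repelling periodic point $\alpha$ with $\alpha\to s$ and $\alpha\to s'$. First I would pass to an iterate of $f$ so that $p$ and the periodic saddles $q\in s$, $q'\in s'$ with $w\subset W^u(q)$, $w'\subset W^u(q')$ are fixed, and consider the open ``sector'' $U$ of $W^s(p)$ bounded near $p$ by $w$ and $w'$: i.e.\ the connected component of $W^s(p)$ minus the free unstable separatrices at $p$ whose closure contains a small wedge between $w$ and $w'$. From the cyclic-order discussion preceding the statement and Proposition \ref{unglue}, $U$ lies inside a single domain $D=\Delta(K_D)$ with $s,s'\subset K_D$. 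In the extended domain $\tilde{D}$, the two boundary lines $L_w, L_{w'}$ meeting at the angle point $p$ take, by Proposition \ref{attractingrepelling}, the form $L_w=\{\alpha_w\}\cup\sigma\cup\{q\}\cup w$ and $L_{w'}=\{\alpha_{w'}\}\cup\sigma'\cup\{q'\}\cup w'$, where $\sigma,\sigma'$ are free stable separatrices of $q,q'$ and $\alpha_w,\alpha_{w'}$ are repelling periodic points.

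Next I would derive the ``half'' conclusions $\alpha_w\to s$ and $\alpha_{w'}\to s'$: since $\sigma\subset W^s(q)$ accumulates on $\alpha_w$, we have $W^u(\alpha_w)\cap W^s(q)\neq\emptyset$, so $\alpha_w\to q\in s$; the symmetric statement gives $\alpha_{w'}\to s'$. It then suffices to show $\alpha_w=\alpha_{w'}$. For this I would pick a point $x\in U$ very close to $w$ and close to $p$ and apply the $\lambda$-lemma at $q$: the backward orbit $(f^{-n}(x))$ first tracks $w$ back toward $q$ and then, in the $U$-quadrant at $q$, tracks $\sigma$ outward, so its $\alpha$-limit equals the $\omega$-limit of $\sigma$ under $f^{-1}$, namely $\alpha_w$; hence $x\in W^u(\alpha_w)$, so the $W^u$-tile of $U\cap V_p$ adjacent to $w$ is $W^u(\alpha_w)$ for $V_p$ a sufficiently small neighborhood of $p$. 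The symmetric argument identifies the tile adjacent to $w'$ as $W^u(\alpha_{w'})$.

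Finally I would identify the two tiles. The key topological observation is that $V_p\subset W^s(p)$, so $U\cap V_p$ meets no stable manifold of any other basic piece. Any frontier inside $U\cap V_p$ between basins of two distinct repellers must therefore lie in $W^u(\Omega)$ for some saddle $\Omega$, i.e.\ in an unstable separatrix of a saddle terminating at $p$: such a separatrix cannot be free (this would contradict the consecutiveness of $w,w'$), so it is non-free, and by the $\lambda$-lemma it must accumulate on $w$ or $w'$ and hence lie in arbitrarily thin strips along them. In particular the ``bulk'' of $U\cap V_p$ contains only one repeller basin, and the $\lambda$-lemma tracking of the preceding paragraph connects this bulk basin to the tile adjacent to $w$ on one side and to $w'$ on the other, forcing $\alpha_w=\alpha_{w'}=:\alpha$. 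The main obstacle is exactly this last step: one has to control the potentially dense family of non-free separatrices accumulating on $w$ and $w'$ from the $U$-side and rule out a genuine Cantor-like subdivision of the wedge into distinct basins; this presumably requires a uniform $\lambda$-lemma estimate, together with the finiteness of the basic pieces and of their free separatrices ending at $p$ (Theorem \ref{th-sbords}).
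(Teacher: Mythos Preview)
Your proposal has a genuine gap at the outset: the claim that the sector $U$ between $w$ and $w'$ lies inside a single domain $\Delta(K_D)$ with $s,s'\subset K_D$ is not valid in general. When $p_1\in s$ and $p_2\in s'$ lie in two \emph{disjoint} saturated hyperbolic sets---hence in two different domains among those cyclically arranged around $p$---the wedge $U$ straddles the glued boundary between those domains. This is exactly the first case the paper isolates: there the region between $w$ and $w'$ contains the translation band running from $p$ to a single repelling periodic point $\alpha$, and the domain structure (Proposition~3.1.2 of \cite{Bonatti}) connects each of $W^s(p_1)$, $W^s(p_2)$ to that band, yielding $\alpha\to s$ and $\alpha\to s'$ at once. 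No matching of two a~priori different repellers is required.

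Even when $s,s'$ do sit in one domain, your route asks for more than the lemma requires and rests on two further problematic steps. First, the description $L_w=\{\alpha_w\}\cup\sigma\cup\{q\}\cup w$ misreads the domain boundary: by item~3 of Theorem~\ref{domaintheorem} the dynamics near each boundary line is a translation, so boundary lines of $\Delta(K)$ carry no points of $W^s(K)\cup W^u(K)$; in particular a boundary line is not a concatenation of free separatrices, and there is no reason for $q$ to possess a free \emph{stable} separatrix $\sigma$ (a free unstable separatrix makes $q$ an $s$-boundary, not a $u$-boundary, cf.\ Theorem~\ref{th-sbords}). Second, the attempt to force $\alpha_w=\alpha_{w'}$ via a basin argument in the wedge is, as you acknowledge, incomplete---and it is also unnecessary. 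The paper instead observes (the ``arche'' configuration of figure~\ref{arche1}) that a single unstable manifold crosses both $W^s(p_1)$ and $W^s(p_2)$, so the basic piece carrying it is a common ancestor of $s$ and $s'$; any repeller above that ancestor finishes the lemma. This bypasses the Cantor-subdivision obstacle entirely.
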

Before moving to the proofs of the two previous lemmas, let us first prove that those lemmas imply that the Smale order of $f$ satisfies the connectivity condition. Indeed, take $p$ a periodic attracting point of $f$ and $s_i$, $s_j$ two distinct basic pieces bigger than $p$. By lemma \ref{lemma1}, there exist $s'_i$, $s'_j$ satisfying $s_i \rightarrow s'_i \rightarrow p$ and $s_j \rightarrow s'_j \rightarrow p$ such that both $s'_i$ and $s'_j$ have a periodic point with a free unstable separatrix crossing only the stable manifold of $p$. Thanks to the finiteness of the free separatrices, by a repeated application of lemma \ref{lemma2}, we obtain a path of the form $s'_i=S_0 , ~ \alpha_0, ~ S_1, ~ \alpha_1 , ~ S_2 ... , ~ \alpha_n , ~ S_n=s'_j$, with $\alpha_i$ being repelling periodic points and $S_i$ saddle-type basic pieces verifying $\alpha_i \rightarrow S_i, S_{i+1}\rightarrow p$. Therefore, $s'_i$ and $s'_j$ are connected by a path in the (non-oriented) graph of the Smale order of $f$, which implies the same for $s_i$ and $s_j$. We do the same for all other attracting and repelling periodic points of $f$ and we obtain that the Smale order of $f$ satisfies the connectivity condition.
\begin{proof}[Proof of lemma \ref{lemma1}]
\par{Take $s$ a hyperbolic set without any attractors or repellers, whose extended domain intersects $p$. Take $s_i\subset s$ a basic piece such that $s_i \rightarrow p$. Consider $e: \mathbb{R} \rightarrow S$ an immersion of one unstable manifold of $s_i$ crossing the stable manifold of $p$. The pre-image by $e$ of the basin of attraction of $p$ is an open $O$, therefore a union of open intervals. Take $(a,b)$ one such interval. By the phase theorem, $e(a)$ and $e(b)$ lie on the stable manifolds of two saddle-type points $p_1,p_2 \in \Omega(f)$ ($p_1$ and $p_2$ aren't necessarily distinct and belong to basic pieces smaller or equal to $s_i$ in the Smale order). Furthermore, $W^s(p_1),~ W^s(p_2)$ are s-boundaries (see theorem-definition \ref{def-sboundary}), therefore the points $p_1,p_2$ can be taken periodic with at least one free unstable separatrix by theorem \ref{th-sbords}. See figures \ref{arche1} and \ref{arche2} for two examples.}
\newline{}
\newline{}
\newline{}
\begin{figure}[h]
\centering
\begin{minipage}{.45\textwidth}
 \begin{flushleft}

  \includegraphics[scale=0.7]{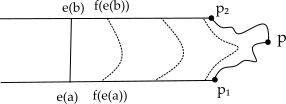}
  \captionof{figure}{}
  \label{arche1}
 \end{flushleft}
 \end{minipage}
\begin{minipage}{.4\textwidth} 
  \vspace{-1.7cm}
  
  \includegraphics[scale=0.7]{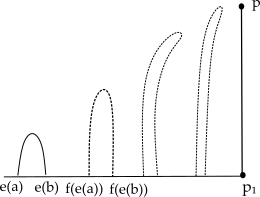}
  \captionof{figure}{}
   \label{arche2}
 \end{minipage}
\end{figure}
\par{Without any loss of generality, let us assume that $p_1,p_2$ are fixed points. Since $e(a) \in W^s(p_1)$ and $e(b) \in W^s(p_2)$, for $n>0$ the image of the arc $e((a,b))$ by $f^n$ approaches one free separatrix of $p_1$ and one of $p_2$, say $W^{u,+}(p_1)$ and $W^{u,+}(p_2)$ respectfully. Even more, since by definition $e(a,b)$ is inside the basin of attraction of $p$, according to Proposition 2.5.2 of \cite{Bonatti}, we have that when $n\rightarrow +\infty$, the set $f^n(e((a,b)))$ tends (in the Hausdorff sense) to $\lbrace p_1,p_2, p\rbrace \sqcup W^{u,+}(p_1) \sqcup W^{u,+}(p_2)$. From that we can easily deduce that $W^{u,+}(p_1)$ and $W^{u,+}(p_1)$ intersect the basin of attraction of $p$, which concludes our proof.}
\end{proof}
\begin{proof}[Proof of lemma \ref{lemma2}]
Fix an attractive periodic point $p$. We'll consider two different cases. Consider first two free separatrices coming the one right after the other in the cyclic order around $p$ issued from two disjoint hyperbolic saturated sets $s,s'$ without any attractors or repellers. Let us denote the two free separatrices $W^{u,+}(p_1)$ and $W^{u,+}(p_2)$. We remind that because of the theorem \ref{th-sbords}, $p_1 \in s$ and $p_2 \in s'$ can be taken periodic. From proposition 3.1.2 in \cite{Bonatti} and the definition of the domain (definition 3.2.1) in \cite{Bonatti} we have that there exists a path connecting $W^s(p_1)$ to the band of translation around the intersection of the two domains $s,s'$ without crossing the unstable manifold of any saddle-type basic piece. Same for $W^s(p_2)$. Therefore, if the band of translation connects the repelling periodic point $\alpha$ to $p$, then $W^u(\alpha)$ intersects $W^s(p_1)$ and $W^s(p_2)$. 
\par{Suppose now that we're given two free separatrices, one following the other in the cyclic order around $p$, that are issued from the same hyperbolic saturated set $s$ that doesn't contain any attractor or repeller. If $p_1$ and $p_2$ belong to the same basic piece, then the result of the lemma is trivial. If between the two free separatrices (inside the interval that contains no other free separatrices) lies a band of translation, we can apply in the same way our previous arguments. Otherwise, again thanks to proposition 3.1.2, the definition 3.2.1 and corollary 2.4.3 of \cite{Bonatti}, we have that we're in the situation of the figure \ref{arche1}. Following the notations of the figure \ref{arche1}, there exists an unstable manifold intersecting $W^s(p_1)$ and $W^s(p_2)$. Therefore, the basic set containing $p_1$ and the basic set containing $p_2$ have a common ancestor, which finishes the proof of the lemma.}
\end{proof}
\subsection*{The connectivity condition is sufficient}
From now on, let us call a Smale diffeomorphism, a stable diffeomorphism with trivial attractors and repellers. 

\begin{lemme}
For every $n_1,n_2,...,n_s \in 2\mathbb{N}-\lbrace 0, 2, 4\rbrace$ such that 
\begin{enumerate}
\item $\sum_{i=1}^s n_i \equiv 4s \text{ mod } 8$ 
\item $(s|n_1,...,n_s) \neq (2|6,10)$
\end{enumerate}
there exists a hyperbolic transitive piece whose extended domain (see corollary \ref{extendeddomain}) has exactly $s$ boundary components of respective lengths $n_1,n_2,...,n_s$ (see definition \ref{lengthofextendeddomain}).
\end{lemme}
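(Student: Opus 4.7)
The strategy will be entirely constructive, proceeding by combining a small family of base hyperbolic transitive pieces via surgery operations that allow one to prescribe the combinatorial data $(s; n_1, \ldots, n_s)$ of the extended domain.

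First I would identify a base family. Starting from a linear Anosov diffeomorphism on $\mathbb{T}^2$, by passing to a sufficiently high iterate we may assume it has many fixed points all with positive eigenvalues, and applying the DA repelling and attracting maps of Section 2.3 to a chosen subset produces a transitive hyperbolic piece whose complement in $\mathbb{T}^2$ consists of the basins of the newly created attractors and repellers. Each DA surgery replaces the chosen fixed point by a small band bounded by two new saddle fixed points, and I would compute how these new saddles organise themselves into the boundary arcs of the extended domain of the transitive set, exactly as in the opening step of the proof of Theorem \ref{Smalequestion}. A handful of explicit configurations will serve as base pieces realising, for instance, $(s=1;\, n_1=8)$ and $(s=2;\, n_1=n_2=6)$.

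Second, I would develop two surgery operations on already-constructed pieces. The first is a \emph{corner identification}: using Theorem \ref{gluening} one glues two transitive pieces along one corner attractor or repeller of their extended domains, which merges two boundary components of lengths $n,n'$ into one of length $n+n'-4$. The second is an \emph{internal DA blowup}: applying the DA map to a periodic orbit in the interior of the piece which is an s-boundary or u-boundary creates either a new boundary component or lengthens an existing one by a fixed amount (4 or 8 depending on the number of free separatrices of the new saddles). Both operations preserve transitivity by the connectivity of the underlying symbolic dynamics, and a direct count shows that each of them alters $\sum n_i - 4s$ by a multiple of $8$. Combined with the ability to permute boundary components (by choosing the order of surgeries) this should give enough flexibility to reach every admissible tuple.

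Third, I would run an induction on $s$ and on $\sum n_i$, reducing an arbitrary tuple satisfying (1) and (2) to one of the base configurations. The hypothesis $n_i \geq 6$ guarantees that every boundary component contains enough corners to admit the corner identification without collapsing, and the congruence $\sum n_i \equiv 4s \pmod 8$ matches exactly the parity preserved by the surgeries, so the induction closes provided the invariant is the only obstruction.

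The main obstacle will be proving that the $\mod 8$ congruence is indeed the sole non-trivial invariant, and accounting for the exceptional case. Tracking $\sum n_i - 4s \pmod 8$ through each surgery is delicate because one must follow simultaneously how attractors and repellers are shared between boundary components and how the periodic points produced by internal DA operations distribute across the cycle. The excluded configuration $(s;n_1,n_2)=(2;6,10)$ presumably fails because any candidate piece with these invariants would force a distribution of s-boundary and u-boundary periodic points incompatible with Theorem \ref{th-sbords} (items 2 and 3) on the corresponding abstract subshift; verifying its non-realisability, and checking by hand that every other small admissible tuple is attained by a base construction, will constitute the technical heart of the argument.
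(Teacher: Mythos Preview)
Your proposal diverges substantially from the paper's argument, and it contains a genuine gap.

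The paper's proof is a one-step reduction to a known result: setting $p_i=n_i/2$, the hypotheses become exactly the Masur--Smillie conditions for the existence of a pseudo-Anosov homeomorphism on a closed surface with $s$ singularities of prong numbers $p_1,\ldots,p_s\ge 3$. One then applies a DA-type blowup at each singular point (first opening the unstable separatrices, then applying attracting DA at each new saddle), and reads off that the resulting transitive hyperbolic set has an extended domain with the prescribed boundary data. The congruence $\sum n_i\equiv 4s\pmod 8$ and the exception $(2\,|\,6,10)$ are not discovered or analysed dynamically: they are inherited verbatim from the quadratic-differential classification.

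Your inductive surgery scheme, by contrast, has a structural problem at the ``corner identification'' step. Gluing two transitive pieces along boundary arcs via Theorem~\ref{gluening} yields a structurally stable diffeomorphism whose non-wandering set contains \emph{both} pieces as separate basic sets; the glued translation band is wandering and creates no recurrence between the two sets. There is no mechanism here that merges them into a single transitive piece, and indeed producing heteroclinic connections in both directions would create a cycle, contradicting stability. The claim that ``both operations preserve transitivity by the connectivity of the underlying symbolic dynamics'' is therefore unfounded as stated. Without this step the induction on $s$ collapses.

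A secondary point: the lemma is purely an existence statement for tuples satisfying (1) and (2); it does not assert that $(2\,|\,6,10)$ is unrealisable, and you should not attempt to prove that. In the paper's approach the exception is simply the shadow of the Masur--Smillie exception $(2\,|\,3,5)$, with no dynamical content required.
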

\begin{proof}
Consider $p_i=n_i/2$. It's proved in \cite{Masur} that a pseudo-anosov map with $s$ singularities having respectfully $P_1,..., P_s \geq 3$ stable separatrices is realisable on a closed surface if and only if 
\begin{enumerate}
\item $\sum_{i=1}^s (P_i-2) \equiv 0 \text{ mod } 4$ 
\item $(s|P_1,...,P_s) \neq (2|3,5)$
\end{enumerate}
Therefore, there exists a closed surface $S$ and a pseudo-anosov map $f \in \text{Hom}^{+}(S)$ with $s$ singularities having respectfully $p_1,...,p_s$ stable separatrices. Similarly to DA maps defined for saddle points, we could define a DA map for $p$-pronged singular points. Using this adaptation of the DA map, we can open up the unstable separatrices of every $p$-pronged singular point as in figure \ref{unstablesingular} and for every $p$-pronged singularity we obtain $p$ new saddle points and a new repelling point. Apply the DA attracting map for every new saddle point obtained in the previous step as in figure \ref{stablesingular}.
\begin{figure}[h]

\begin{minipage}{.45\textwidth}
 \begin{flushleft}

  \includegraphics[width=1\linewidth]{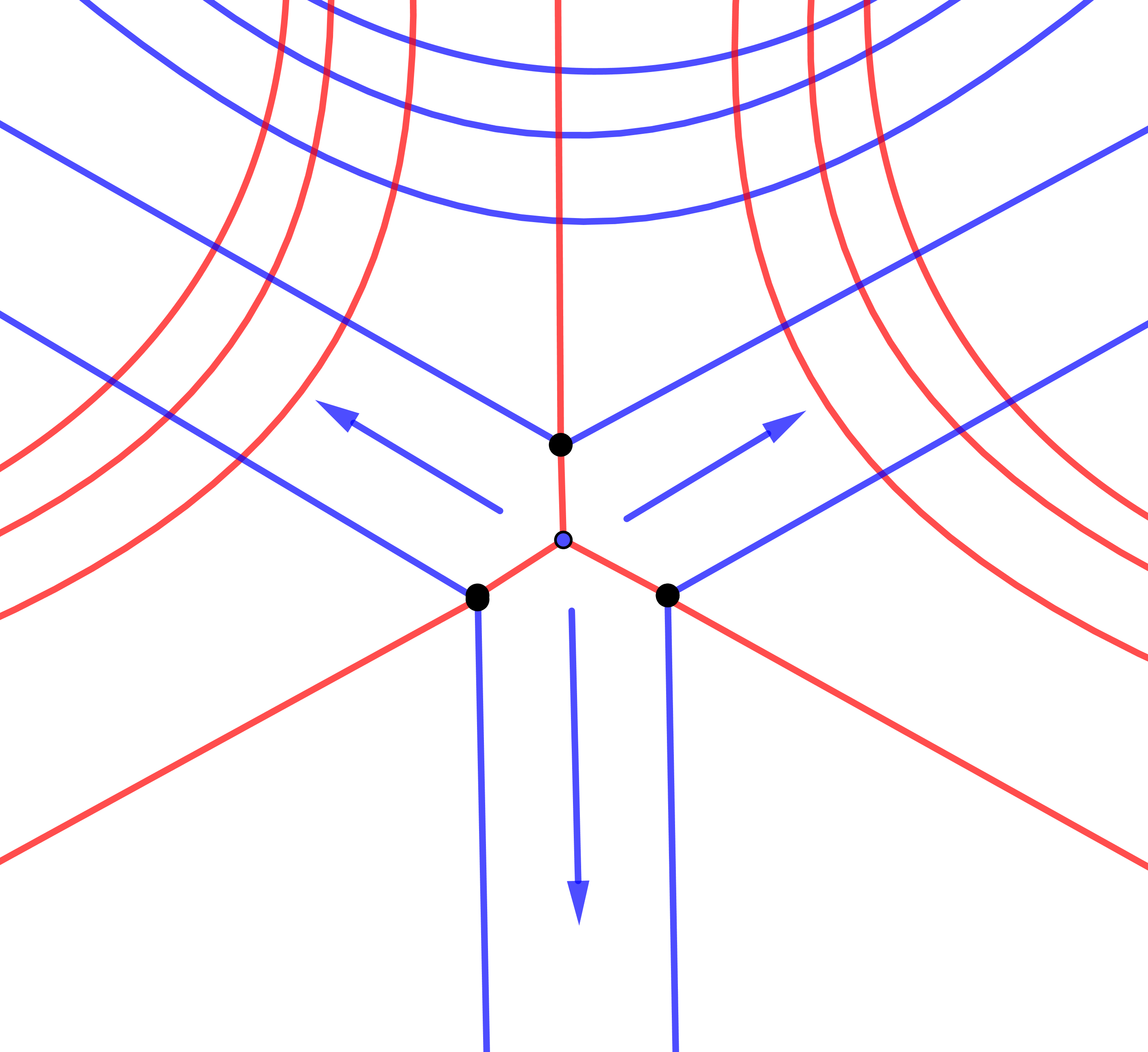}
  \captionof{figure}{}
  \label{unstablesingular}
 \end{flushleft}
 \end{minipage}
\hspace{1cm}
\begin{minipage}{.4\textwidth} 
\begin{flushright}
\includegraphics[scale=0.075]{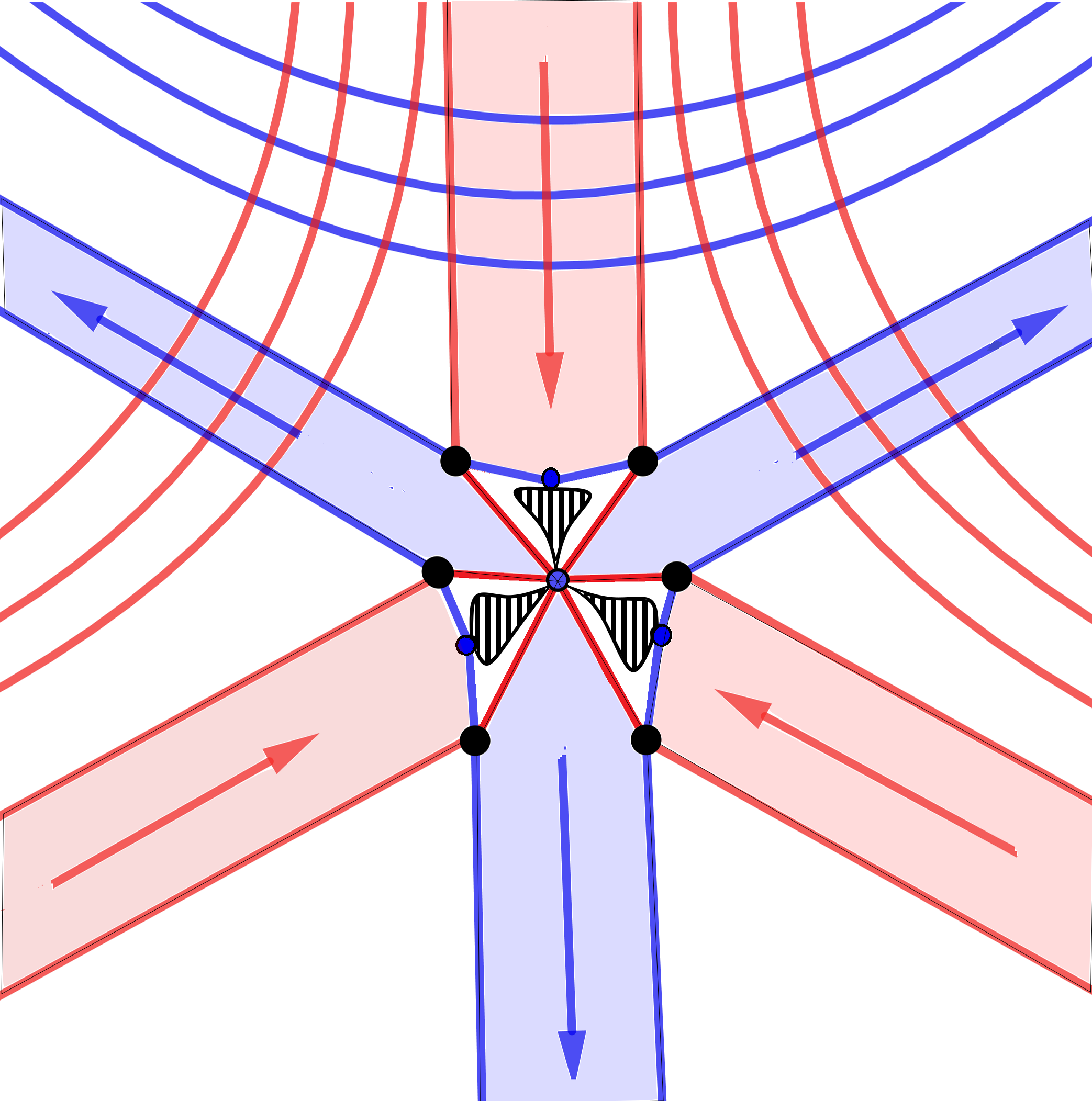}
  \vspace{-0.6cm}
  \hspace{0.5 cm}
  \caption{}
   \label{stablesingular} 
\end{flushright}
\end{minipage}  
\end{figure}
\par{By the previous method, we obtain a new homeomorphism $f'$ and by smoothing $f'$, we obtain a new hyperbolic transitive set $K$, whose extended domain has $s$ boundary components of respective lengths $n_1,n_2,...,n_s$. Indeed, the dynamics outside the bands between the opened separatrices, is semiconjugated to the initial dynamics after the DA perturbation and even after smoothing $f'$ the hyperbolicity and transitivity are conserved. Inside the new bands, every point is wandering except from the repelling and attracting fixed points created by the DA perturbations. Also, by our discussion in section 2.3, every point inside a blue band belongs to some stable manifold of $K$ and every point inside a red band belongs to some unstable manifold of $K$. However, the open disks bounded by quadrilaterals, whose boundaries consist of two blue and two red lines in figure \ref{stablesingular} contain no invariant manifold of $K$. Therefore, using that the domain is open and that it contains all the invariant manifolds of $K$, the extended domain of $K$ will be of the form drawn in figure \ref{stablesingular}, where the disks with the line pattern represent its complement. Finally, the boundary of every black disk consists of two lines, close to which $f'$ acts as a translation from a repelling fixed point to an attracting one. Hence, to every opened singularity corresponds a boundary component of the extended domain of $K$ with length equal to two times the valency of the singularity, which proves that the boundary components of the extended domain of $K$ have the prescribed lengths.}
\end{proof}
Without any loss of generality, assume that the previously constructed hyperbolic set contains many fixed saddle-type points. For every one of them we can perform the same DA process described in the previous proof and create as many boundary components of length 4 as we want. We can therefore generalise the previous lemma to the following: 
\begin{lemme}\label{creation}
For every $n_1,n_2,...,n_s \in 2\mathbb{N}-\lbrace 0\rbrace$ such that 
\begin{enumerate}
\item $\sum_{i=1}^s n_i \equiv 4s \text{ mod } 8$ 
\item $(n_1,...,n_s) \neq (10,6,4,4,4,...)$
\end{enumerate}
there exists a hyperbolic transitive piece whose extended domain has exactly $s$ boundary components of respective lengths $n_1,n_2,...,n_s$.
\end{lemme}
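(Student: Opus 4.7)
The plan is to reduce to the previous lemma by manufacturing the length-$4$ boundary components through DA perturbations performed at fixed saddles of the hyperbolic piece it produces; this is exactly the strategy flagged in the paragraph preceding the statement.

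First I would partition the index set as $I := \{i : n_i \geq 6\}$ and $J := \{i : n_i = 4\}$ (ignoring for the moment the possibility that some $n_i = 2$, which I comment on at the end). The subsequence $(n_i)_{i \in I}$ lies in $2\mathbb{N} - \{0, 2, 4\}$ by construction, and inherits the congruence hypothesis:
\begin{equation*}
\sum_{i \in I} n_i \;=\; \sum_{i=1}^{s} n_i \;-\; 4|J| \;\equiv\; 4s - 4|J| \;=\; 4|I| \pmod{8}.
\end{equation*}
The forbidden configuration $(|I|,(n_i)_{i \in I}) = (2,\{6,10\})$ can only arise when the full list equals $(10,6,4,4,\ldots,4)$ up to permutation, which is precisely the case excluded by hypothesis. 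Hence the previous lemma supplies a hyperbolic transitive piece $K_0$ whose extended domain has exactly $|I|$ boundary circles of the prescribed lengths $(n_i)_{i \in I}$.

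Second, I would exploit that $K_0$ is transitive and non-trivial: after replacing $f$ by a sufficiently high iterate, $K_0$ contains arbitrarily many fixed saddles with positive eigenvalues, and by theorem \ref{domaintheorem} we may further demand that none of them has a free separatrix. Pick $|J|$ such saddles $p_1,\ldots,p_{|J|}$ with pairwise disjoint small neighborhoods and, at each one, perform the same composite DA procedure used in the proof of the previous lemma: a DA-repelling opening up the two unstable separatrices of $p_j$, followed by a DA-attracting applied to each of the two new saddles thereby created. Since an ordinary saddle has exactly two stable and two unstable separatrices, this is precisely the $p = 2$ specialization of the local picture drawn in figure \ref{stablesingular}. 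The analysis of that figure, read with $p = 2$, shows that each perturbation produces a fresh boundary circle of the extended domain of length exactly $2 \cdot 2 = 4$, bounded by the new attractors and the new repeller that appear in the band. The pre-existing boundary circles of the extended domain are left intact because the DA supports can be taken arbitrarily small and pairwise disjoint from one another and from the other boundary circles.

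The main technical point to verify is that the new hyperbolic piece $K$ obtained after all $|J|$ DA perturbations is still transitive. Outside the DA bands the dynamics is semi-conjugate to $f|_{K_0}$ (via the collapse that identifies each band with the orbit of the corresponding $p_j$), and this semi-conjugacy transfers transitivity from $K_0$ to $K$; after smoothing, we obtain the required hyperbolic transitive piece with exactly $s$ boundary circles of lengths $(n_1,\ldots,n_s)$. The edge case $I = \emptyset$ (all $n_i = 4$) is treated identically but starting from a linear Anosov diffeomorphism on $\mathbb{T}^2$, whose extended domain has no boundary, and DA-perturbing $s$ of its fixed saddles. The genuinely degenerate case $n_i = 2$ is not produced by this scheme and would require a separate construction tailored to producing a bigon boundary carrying a single periodic saddle; in the applications of the next section only the lengths $n_i \geq 4$ are actually needed.
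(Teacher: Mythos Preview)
Your proposal is correct and follows exactly the approach sketched in the paper: partition the indices into those with $n_i\ge 6$ (handled by the previous lemma after checking the congruence is inherited and the forbidden pair $(6,10)$ corresponds precisely to the excluded list $(10,6,4,\ldots,4)$) and those with $n_i=4$ (added afterwards by performing the composite DA process at ordinary fixed saddles of the resulting piece). Your write-up is in fact more detailed than the paper's one-sentence reduction; your explicit treatment of the edge case $I=\emptyset$ via an Anosov on $\mathbb{T}^2$ and your remark that $n_i=2$ is not produced by this scheme (and not needed in the later applications, where length-$2$ boundary cycles are first promoted to length $10$) are both accurate observations that the paper leaves implicit.
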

Let us make some remarks about the previous lemma:
\begin{enumerate}
\item In \cite{Masur} a stronger result than the one we used in our proof is shown: except a finite number of exceptions, given $P_1,..., P_s \geq 1$ with $P_i \neq 2$ there exists a pseudo-anosov map on a surface with a finite number of punctures having $s$ singularities with respective valencies $P_1,..., P_s$. Therefore, using the previous fact, the lemma \ref{creation} can be further generalised by applying the DA process described in the proof above to 1-pronged singularities.
\item Of course, applying a DA process to pseudo-anosov maps constitutes one of the possible constructions of domains of transitive hyperbolic pieces, but it's not the only one. This raises the following question: Given any $n_1,...,n_s \in 2\mathbb{N} -\lbrace 0\rbrace$ does there exist a basic piece whose extended domain has $s$ boundary components of respective lengths $n_1,...,n_s$?
\end{enumerate}
 
\begin{proof}[Proof of theorem \ref{Stablecase}]
\par{We will break down the following proof in two steps: 
\begin{enumerate}
\item Using the conventions of section 3, every partial order on a finite set, whose associated graph consists of three level of elements: maximal, minimal and first generation saddles is realisable as a Smale order of a Smale diffeomorphism
\item The previous step implies the general case 
\end{enumerate}}
Take a partial order $(F,<)$ satisfying the conditions of Step 1. In the following lines, we'll always denote a maximal element by $\alpha$, a minimal one by $\omega$ and any other element by $s$. 
\subsubsection*{Constructing locally the dynamics around repelling and attracting fixed points}
\par{The dynamics of a Smale diffeomorphism, whose Smale order satisfies the hypothesis of Step 1, can be obtained after gluing together domains of transitive hyperbolic sets. This is because, since the saddle-type basic pieces are not related by the Smale order, their domains can be taken disjoint by theorem \ref{domaintheorem}. Therefore, in this case, around every repelling or attracting periodic point of $f$, there exists a cycle of transitive domains (see figure \ref{domainsaroundfixedpoint}) and this motivates our definition of \emph{cycle}.}
\begin{definition}
Take a partial order on a finite set $(F,<)$, a minimal element $\omega$ and the set $\lbrace s^{\omega}_1, s^{\omega}_2,...,s^{\omega}_n\rbrace$ of all non-maximal elements bigger than $\omega$. We'll call $s^{\omega}_{i_1} \xrightarrow{\alpha_1} s^{\omega}_{i_2} \xrightarrow{\alpha_2} s^{\omega}_{i_3} \xrightarrow{\alpha_3} s^{\omega}_{i_4} \xrightarrow{\alpha_4}... \xrightarrow{\alpha_n}s^{\omega}_{i_1}$ a cycle of $\omega$ if all the $s^{\omega}_i$ and the maximal elements bigger than $\omega$ appear at least once in the sequence and for every $j$ we have $\alpha_j>s^{\omega}_{i_j},s^{\omega}_{i_{j+1}}$. Similarly, we can define cycles for repelling points. 
\end{definition} 
It's not difficult to see that a partial order satisfies the connectivity condition if and only if there exists at least one cycle around every maximal and minimal element. 
\newline{}
\begin{figure}[h!]
\begin{center}
\includegraphics[scale=0.5]{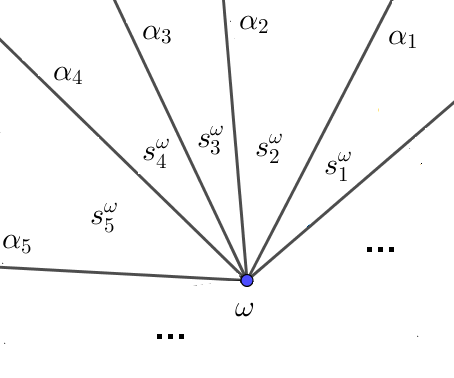}
\caption{}
\label{domainsaroundfixedpoint1}
\end{center}
\end{figure}
\par{Let us now represent as in figure \ref{domainsaroundfixedpoint1} every minimal element $\omega$ of $(F,<)$ by an attracting fixed point accompanied by a cycle of the form $s^{\omega}_1 \xrightarrow{\alpha_1} s^{\omega}_2 \xrightarrow{\alpha_2} s^{\omega}_3 \xrightarrow{\alpha_3} s^{\omega}_4 \xrightarrow{\alpha_4}... \xrightarrow{a_n}s^{\omega}_1$. We'll say that in figure \ref{domainsaroundfixedpoint1} the domains of the different basic pieces are delimited by \emph{bands}. The reader can think of $\alpha_i$ as the repelling point for which there exists a band of translation from $a_i$ to $\omega$ close to the intersection of $s_i$ and $s_{i+1}$, and of every band, as one half of one translation band between a repelling point and $\omega$. We assume, without any loss of generality, that the cycles around every minimal element $\omega$ satisfy the following conditions:}
\begin{enumerate}
\item For every $s_k,s_l,\alpha_k$ in $F$ such that $\alpha_k>s_k,s_l>\omega$ the transition $s^{\omega}_k \xrightarrow{\alpha_k} s^{\omega}_l$ appears at least once in the cycle.
\item For $s^{\omega}_k \neq s^{\omega}_l$ the number of transitions of the form $s^{\omega}_k \xrightarrow{\alpha_k} s^{\omega}_l$ is the same as the number of transitions of the form $s^{\omega}_l \xrightarrow{\alpha_k} s^{\omega}_k$
\end{enumerate}

\par{The first condition is always possible thanks to the connectivity condition and in order to assure the second condition, we can always add at the end of a cycle its symmetric. In the same way, we fix a cycle verifying the previous conditions for every maximal element. By convention, we'll draw the cycles in a counter clockwise way for attracting points and in a clockwise way for repelling ones. Such an orientation associates every band around any minimal element $\omega$ to its \emph{type}, an arrow of the form $s^{\omega}_k \xrightarrow{\alpha} s^{\omega}_l$. Same for bands around maximal points.}
\par{Choosing a cycle around repelling and attracting points fixes the local dynamics around of these points. In order that the previous choice of cycles be realised by a Smale diffeomorphism we need that for every minimal element $\omega$ the number of bands of type $s^{\omega}_l \xrightarrow{\alpha_j} s^{\omega}_k$ is equal to number of bands of type $s^{\alpha_j}_l \xrightarrow{\omega} s^{\alpha_j}_k$. Indeed, as we've said before, each band of the previous type corresponds to half a translation band going from $\alpha_j$ to $\omega$. We can always choose a collection of cycles around repelling and attracting fixed points that verifies the previous condition, that we'll call condition $(*)$. } 
\begin{lemme}\label{exists equilibrium}
There exists a collection of cycles around the minimal and maximal elements satisfying the conditions $1,2$ and $(*)$.
\end{lemme}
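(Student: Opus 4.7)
The plan is to recast the choice of cycles as finding Eulerian circuits in certain balanced, strongly connected multidigraphs, one attached to each minimal element and each maximal element of $(F,<)$. For each minimal $\omega$, put $V_\omega := \{s \in F : s > \omega,\ s \text{ not maximal}\}$ and define a multidigraph $D_\omega$ on $V_\omega$ as follows: for every maximal $\alpha$ and every ordered pair $(s_k,s_l) \in V_\omega \times V_\omega$ with $s_k \neq s_l$ and $\alpha > s_k,s_l$, insert one arc $s_k \xrightarrow{\alpha} s_l$; for every $\alpha > s_k$ with $s_k \in V_\omega$, insert one loop $s_k \xrightarrow{\alpha} s_k$. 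Dually, for each maximal $\alpha$ define $D_\alpha$ on $V_\alpha := \{s \in F : s < \alpha,\ s \text{ not minimal}\}$, with arcs labelled by the minimal $\omega$'s below $s_k,s_l$.

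The first step is to check that each $D_\omega$ (and symmetrically each $D_\alpha$) is balanced, i.e.\ $\deg^+(v) = \deg^-(v)$ at every vertex, and that its underlying undirected multigraph is connected. Balance is immediate since non-loop arcs were added in reciprocal pairs $\{s_k \to s_l,\ s_l \to s_k\}$ while loops contribute equally to in- and out-degrees. For connectivity, any two vertices $s, s' \in V_\omega$ are joined inside the Hasse subgraph of $\{x \in F : x > \omega\}$ by the connectivity condition, and in the three-level setting such a path alternates between saddles and maximal elements, so each consecutive piece $s_k - \alpha - s_l$ yields an undirected edge of $D_\omega$. Hence $D_\omega$ is strongly connected and, by the classical Eulerian circuit theorem for digraphs, it admits an Eulerian circuit $\mathcal{C}_\omega$ traversing each arc exactly once; proceed analogously to obtain $\mathcal{C}_\alpha$.

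Interpreting these Eulerian circuits as the desired cycles around the attracting and repelling periodic points, all three conditions become immediate. For condition $1$, every arc of $D_\omega$, i.e.\ every legal transition $s_k \xrightarrow{\alpha} s_l$, is traversed exactly once. For condition $2$, the reciprocal arcs $s_k \xrightarrow{\alpha} s_l$ and $s_l \xrightarrow{\alpha} s_k$ each appear once, so their counts coincide. For $(*)$, given any admissible quadruple $(\omega,\alpha,s_k,s_l)$, the arc $s_l \xrightarrow{\alpha} s_k$ occurs exactly once in $\mathcal{C}_\omega$ while $s_l \xrightarrow{\omega} s_k$ occurs exactly once in $\mathcal{C}_\alpha$; both counts equal $1$, and the required equality holds.

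The only genuine subtlety is the degenerate situation in which $V_\omega = \emptyset$ (or $V_\alpha = \emptyset$): the connectivity condition then forces $\{x : x > \omega\}$ to consist of a single maximal element and one takes the empty cycle, so all conditions are vacuous. The delicate point in the generic case, namely that the Eulerian circuit must be a single closed walk rather than a disjoint union of smaller circuits, is precisely what the strong connectivity of $D_\omega$ guarantees, and this in turn is what makes the connectivity condition on $(F,<)$ indispensable. Thus the lemma reduces to a standard existence statement for Eulerian circuits.
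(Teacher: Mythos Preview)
Your argument is correct and takes a different route from the paper. The paper proceeds in two stages: first choose any cycles around each minimal and maximal element satisfying conditions~1 and~2 (possible by the connectivity hypothesis, after appending to a cycle its reverse to enforce~2), and then repair each violation of~$(*)$ by a local surgery on the deficient side that inserts an extra pair of transitions $s_k \xrightarrow{\omega} s_l,\ s_l \xrightarrow{\omega} s_k$ (or a single $s \xrightarrow{\omega} s$ in the loop case) without destroying conditions~1 and~2; after finitely many such insertions all four counts in~$(*)$ agree. You instead build, for each extremal element, the balanced multidigraph whose arcs are precisely the admissible labelled transitions, observe that the connectivity condition makes it connected, and invoke the Euler circuit theorem to obtain a cycle traversing each arc exactly once; condition~$(*)$ then holds automatically because every count equals~$1$. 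Your construction is tidier and yields minimal cycles, whereas the paper's iterative fix shows that \emph{any} starting family of cycles can be upgraded to satisfy~$(*)$, which is in the same spirit as the later local modifications used to adjust boundary-cycle lengths. A minor quibble: in the degenerate case $V_\omega=\emptyset$ the empty sequence does not literally meet the paper's definition of a cycle (the unique maximal element above $\omega$ is supposed to appear), but this corresponds to a sphere component with one source and one sink and plays no role in the band gluings, so the lapse is harmless.
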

\begin{proof}
Take any collection of cycles around the minimal and maximal elements satisfying the conditions 1 and 2. This is always possible by our previous discussion. If for some $\omega, \alpha_j, s_k, s_l$ with $k \neq l$ we have that in our choice of cycles $\#(s^{\omega}_l \xrightarrow{\alpha_j} s^{\omega}_k) > \#(s^{\alpha_j}_l \xrightarrow{\omega} s^{\alpha_j}_k)$ (by condition 2 we also have that $\#(s^{\omega}_k \xrightarrow{\alpha_j} s^{\omega}_l) > \#(s^{\alpha_j}_k \xrightarrow{\omega} s^{\alpha_j}_l)$), then we apply the operation shown in figure \ref{equilibriu} to $\alpha_j$'s cycle.   
\begin{figure}[h!]
\begin{center}
\includegraphics[scale=0.35]{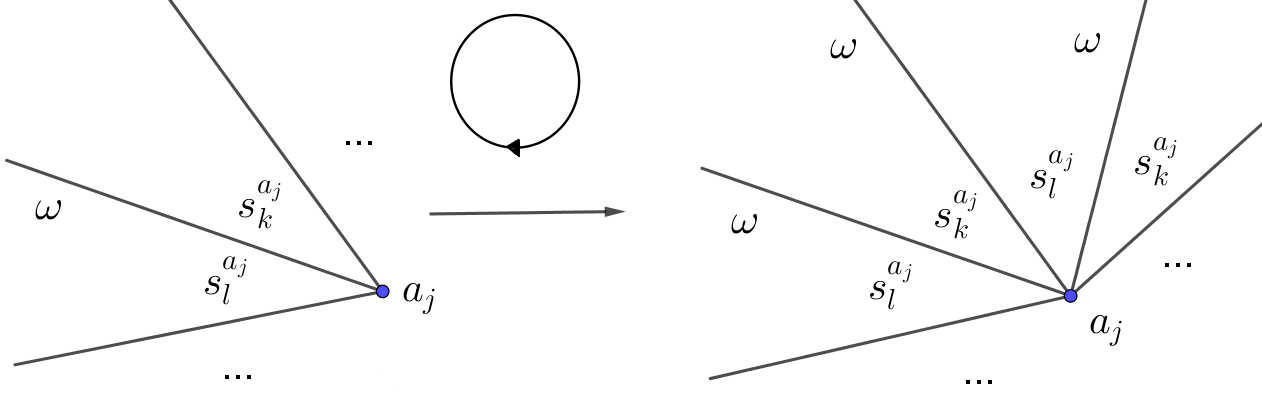}
\caption{}
\label{equilibriu}
\end{center}
\end{figure}
\par{The previous operation is possible thanks to condition 1 and the new cycle around $\alpha_j$ satisfies the conditions 1 and 2. After the previous operation, the number of $s^{\alpha_j}_l \xrightarrow{\omega} s^{\alpha_j}_k$ and $s^{\alpha_j}_k \xrightarrow{\omega} s^{\alpha_j}_l$ increases by one and by condition 2 we have that $\#(s^{\omega}_l \xrightarrow{\alpha_j} s^{\omega}_k)= \#(s^{\omega}_k \xrightarrow{\alpha_j} s^{\omega}_l)$ and $\#(s^{\alpha_j}_k \xrightarrow{\omega} s^{\alpha_j}_l)= \#(s^{\alpha_j}_l \xrightarrow{\omega} s^{\alpha_j}_k)$. Therefore, after a finite number of repetitions the two previous cardinals will be equal.}
\par{Similarly, if for some $\omega, \alpha_j, s$ we have that in our choice of cycles $\#(s^{\omega} \xrightarrow{\alpha_j} s^{\omega}) > \#(s^{\alpha_j} \xrightarrow{\omega} s^{\alpha_j})$, then we apply the operation shown in figure \ref{equilibriu1} to $\alpha_j$'s cycle.
\begin{figure}[h!]
\begin{center}
\includegraphics[scale=0.35]{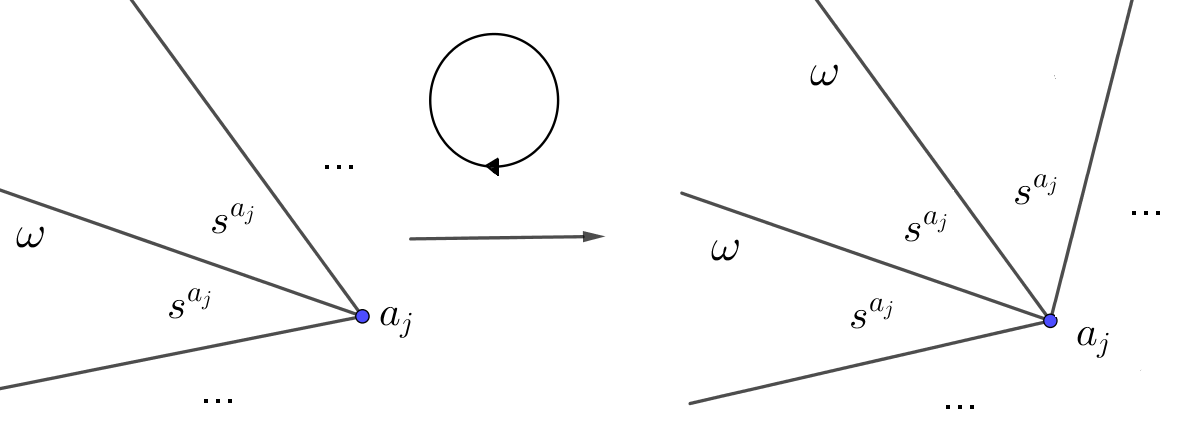}
\caption{}
\label{equilibriu1}
\end{center}
\end{figure}
Again, the previous operation is possible thanks to condition 1, the new cycle around $\alpha_j$ satisfies the conditions 1 and 2 and after a finite number of repetitions we get $\#(s^{\omega} \xrightarrow{\alpha_j} s^{\omega}) = \#(s^{\alpha_j} \xrightarrow{\omega} s^{\alpha_j})$.}
\end{proof}
Thanks to the previous lemma, we can assume in what follows that our choices of cycles around minimal and maximal elements verify for any $j,k,l,p$ 
\begin{equation}\label{equilibrium} \tag{*} 
\#s^{\omega_p}_k \xrightarrow{\alpha_j} s^{\omega_p}_l ~=~\#s^{\omega_p}_l \xrightarrow{\alpha_j} s^{\omega_p}_k ~=~\#s^{\alpha_j}_l \xrightarrow{\omega_p} s^{\alpha_j}_k~=~\#s^{\alpha_j}_k \xrightarrow{\omega_p} s^{\alpha_j}_l
\end{equation}
\par{The only thing now remaining is to glue the domains of the hyperbolic pieces, following our previous choice of cycles around attracting and repelling points. This process looks similar to completing a puzzle: by choosing cycles around fixed points we've put together all the boundary puzzle pieces and the only thing remaining is to fit the rest of the pieces inside the constructed frame.}
\subsubsection*{A first example}
\par{Before providing the general construction in the next section and in order to make the idea of the proof more clear to the reader, let us first look at a simple example. Suppose we have only one attracting and one repelling point and we've chosen cycles around them satisfying (\ref{equilibrium}) exactly as in figure \ref{example1}. We remind that we've chosen opposite orientations when drawing cycles around repelling and attracting points. This is not an arbitrary choice of ours, but a necessary precaution in order to preserve the orientation in our drawings. }
\par{In order to glue the domain of $s_1$ to our drawing, we have to first draw its boundary, therefore the translation bands from $\alpha$ to $\omega$. In order to do so, we glue together the bands around $\alpha$ and $\omega$. Once the boundary of the domain of $s_1$ has been determined, all that remains is to construct a domain for $s_1$ with a prescribed number of boundary components with prescribed lengths, which will be done in a second phase. Let us make the above more precise by using the example in figure \ref{example1}.}
\par{Start from the attractor $\omega$ and consider its band of type $s_2^{\omega} \xrightarrow{\alpha} s_1^{\omega}$. This band is part of a translation band beginning from $\alpha$ and arriving at $\omega$, therefore it must be glued to a band of the type $s_2^\alpha \xrightarrow{\omega} s_1^\alpha$ and since by our choice of cycles there is just one of them, we have no choice but to glue those two bands together. The result of this gluing is part of the boundary of the domain of $s_1$. Remember that the boundaries of the domains are star-shaped (see figure \ref{domain}), therefore in order to complete the boundary of the domain of $s_1$, we now need to decide to which band is glued the band $s_1^\alpha \xrightarrow{\omega} s_2^\alpha$ (the next in the cyclic orientation of $\alpha$ after $s_2^\alpha \xrightarrow{\omega} s_1^\alpha$). Once more, we don't have much of a choice we must glue it together with the unique band of $\omega$ of type $s_1^{\omega} \xrightarrow{\alpha} s_2^{\omega}$. Again, since the domains are star-shaped we now have to decide where to glue the band $s_2^{\omega} \xrightarrow{\alpha} s_1^{\omega}$ (the one before $s_1^{\omega} \xrightarrow{\alpha} s_2^{\omega}$ in the cyclic orientation of $\omega$), but this choice has already been made.}
\begin{figure}[h!]
\begin{center}
\includegraphics[scale=0.25]{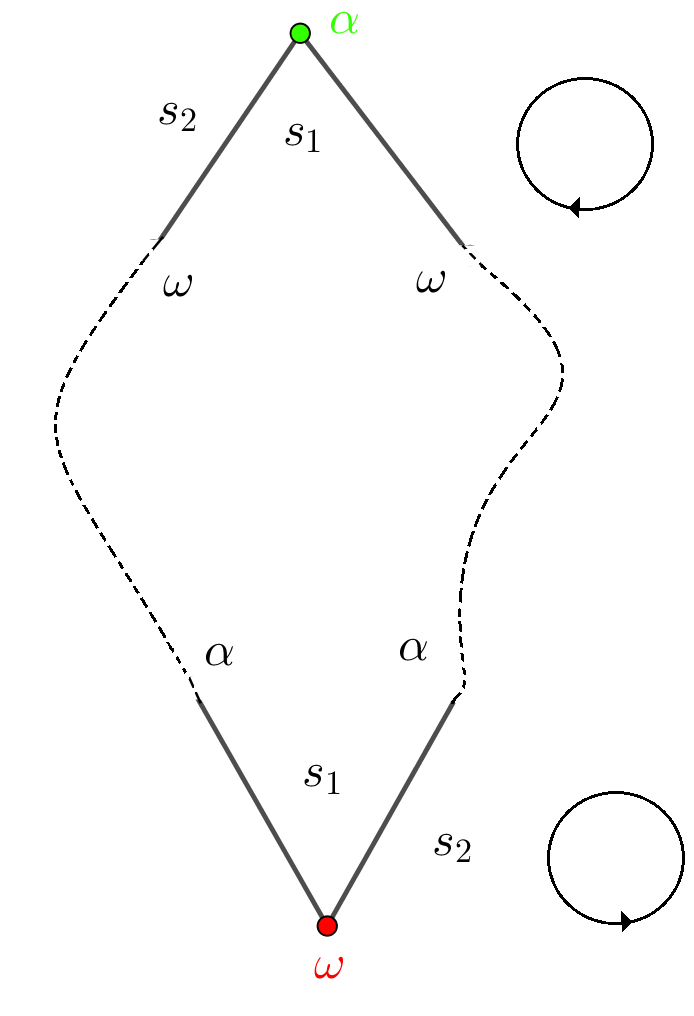}
\caption{}
\label{example1}
\end{center}
\end{figure}
\par{By using the above algorithm, which we're going to generalise in the next section, we've now formed a cyclic boundary component of $s_1$ and since we've exhausted all the bands related to $s_1$, the behaviour of the boundary of the domain of $s_1$ has been determined. We repeat the same process for $s_2$, but our previous choices determine completely the behaviour of the boundary of $s_2$. Therefore, now we move on to the gluing of the domains. There exists a transitive hyperbolic piece whose domain has a unique boundary component of length 2, namely the standard horseshoe on the sphere. Take the domains of two horseshoes and glue them together along their boundaries, according to the figure \ref{example1}. Thanks to theorem \ref{gluening}, at the end of this process we receive a Smale diffeomorphism. Furthermore, since our band gluings can be drawn in the plane and the domain of a horseshoe is planar, the surface that we obtain at the end of this process is a sphere.}
\subsubsection*{Generalisation}
\par{Let us now generalise the previous algorithm. For every attracting or repelling point $A$ consider an indexation in $\mathbb{Z}/ n\mathbb{Z}$ of the translation bands compatible with the cyclic orientations, where $n$ is the length of the cycle of bands around $A$. Denote by $S^{\omega}_i \xrightarrow{\alpha} S^{\omega}_j$ the set of all bands of type $s^{\omega}_i \xrightarrow{\alpha} s^{\omega}_j$ around $\omega$ and $S_i \rightarrow S_j:= \bigcup_{\alpha,\omega} (S^{\omega}_i \xrightarrow{\alpha} S^{\omega}_j \cup S^{\alpha}_i \xrightarrow{\omega} S^{\alpha}_j)$. A band in $S_i \rightarrow S_j$ will called of type \emph{beginning} for $s_j$ and of type \emph{end} for $s_i$ (see figure \ref{example1}). A \emph{boundary cycle} for $s_2$ is a finite sequence of bands of the form 
\begin{align*}
(s^{\omega_1}_1 \xrightarrow{\alpha_1} s^{\omega_1}_2)_{i_1} &\longleftrightarrow(s^{\alpha_1}_1\xrightarrow{\omega_1} s^{\alpha_1}_2)_{i_2} \hookrightarrow (s^{\alpha_1}_2 \xrightarrow{\omega_2} s^{\alpha_1}_3)_{i_3} \longleftrightarrow (s^{\omega_2}_2\xrightarrow{\alpha_1} s^{\omega_2}_3)_{i_4}  \\& \hookrightarrow (s^{\omega_2}_4 \xrightarrow{\alpha_2} s^{\omega_2}_2)_{i_5} \longleftrightarrow ... \hookrightarrow (s^{\omega_1}_1 \xrightarrow{\alpha_1} s^{\omega_1}_2)_{i_1}
\end{align*} that satisfies the following axioms:
\begin{enumerate}
\item[•] For every $i,j,k,l,m$ we have that $(s^{\omega_i}_j \xrightarrow{\alpha_k} s^{\omega_i}_l)_{i_m} \in S^{\omega_i}_j \xrightarrow{\alpha_k} S^{\omega_i}_l $
\item[•] If in the above sequence a band $e$ of type end for $s_2$ follows a band $b$ of type beginning for $s_2$ ($b\hookrightarrow e$), then the two bands belong to the same repelling point. If a band of type beginning $b$ follows a band of type end $e$ ($e\hookrightarrow b$), then the two bands belong to the same attracting point. Furthermore, in both of the above cases, if the indices of $e$ and $b$ are $i(e)$ and $i(b)$ we have $i(e)=i(b) + 1$. 
\item[•] If we identify the first and last band of the sequence, every band in $S_2 \rightarrow S_k$ or $S_k \rightarrow S_2$ with $k \neq 2$ must appear at most once inside the sequence. Similarly, every band in $S_2 \rightarrow S_2$ must appear at most twice.
\end{enumerate}}
The definition of a boundary cycle for $s_1$ can seem rather complicated, but it is not more than a formalisation of the algorithm that we've used in the example described in the previous section to form the domain of $s_1$: start from a band $s$ of type beginning for $s_1$ around an attracting point $\omega$, it corresponds to a translation band from a repelling point $\alpha$ to $\omega$, glue $s$ to some band $s'$ of the same type around $\alpha$ ($s \longleftrightarrow s'$), consider the band $s''$ coming after $s'$ for the cyclic orientation around $\alpha$ ($s' \hookrightarrow s''$), $s''$ is of type end for $s_1$ and it corresponds to a translation band from $\alpha$ to some attracting point $\omega_1$, glue it to a band $s'''$ of $\omega_1$ of the same type ($s''\longleftrightarrow s'''$), consider the band coming before $s'''$ in $\omega_1$, etc. The previous algorithm doesn't have the right to glue a new band to an old band that has already been glued and continues in this way until the band $s$ from which we started is reached. The condition (\ref{equilibrium}) is essential in order that this algorithm be well-defined.
\par{A boundary cycle for $s_1$ represents a boundary component of the domain of $s_1$. If we identify the first and last elements of a given boundary cycle, the number of its elements divided by two will be called the \emph{length of the boundary cycle}. The length of a boundary cycle is always even and represents the length of its associated boundary component.}
\par{Let us now look into a more sophisticated example. Once again around the attracting and repelling points we've chosen cycles that satisfy (\ref{equilibrium}). 
\begin{figure}[h!]
\begin{center}
\includegraphics[scale=0.35]{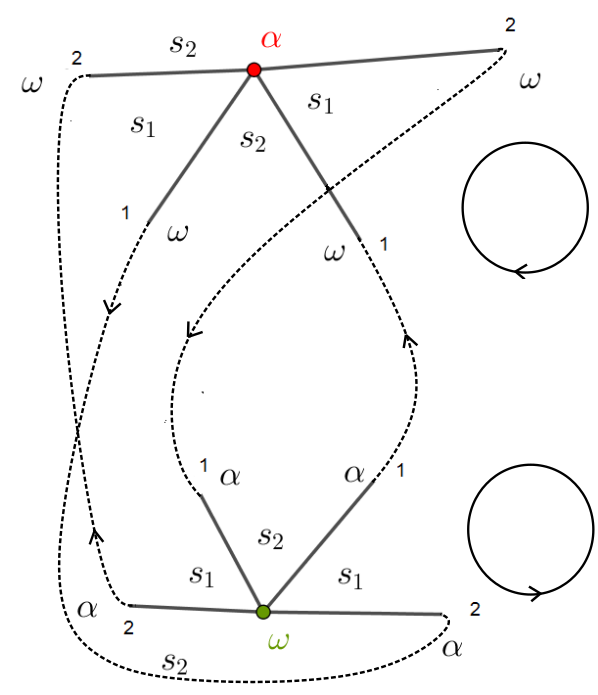}
\caption{}
\label{example}
\end{center}
\end{figure}
We glue the bands together as drawn in figure \ref{example}. Every gluing of the bands determines a partition of the bands in boundary cycles. In our case, the partition corresponding to $s_2$ consists of one cycle, namely: }
\begin{align*}
(s^{\omega}_1 \xrightarrow{\alpha} s^{\omega}_2)_{1} &\longleftrightarrow (s^{\alpha}_1\xrightarrow{\omega} s^{\alpha}_2)_{1} \hookrightarrow (s^{\alpha}_2 \xrightarrow{\omega} s^{\alpha}_1)_{1} \longleftrightarrow (s^{\omega}_2\xrightarrow{\alpha} s^{\omega_2}_1)_2 \hookrightarrow (s^{\omega}_1 \xrightarrow{\alpha} s^{\omega}_2)_2 \\ &\longleftrightarrow (s^{\alpha}_1 \xrightarrow{\omega} s^{\alpha}_2)_2 \hookrightarrow (s^{\alpha}_2 \xrightarrow{\omega} s^{\alpha}_1)_2 \longleftrightarrow (s^{\omega}_2 \xrightarrow{\alpha} s^{\omega}_1)_1 \hookrightarrow (s^{\omega}_1 \xrightarrow{\alpha} s^{\omega}_2)_1
\end{align*}
The arrows drawn in figure \ref{example} represent the order of appearance of every band inside the boundary cycle of $s_2$. Once again our choices for $s_2$ determine completely the gluings of all the bands related to $s_1$. Each of the domains $s_1$ and $s_2$ has according to our band gluings, one cyclic boundary component of length 4. We could take $s_1$ and $s_2$ to be fixed saddle points and glue them along their boundaries following figure \ref{example}. Once again, thanks to theorem \ref{gluening}, at the end of this process we receive a Smale diffeomorphism. The difference of this example with the previous one is that the band gluings in this example cannot be drawn in a planar way. Therefore, despite the fact that the domains of $s_1$ and $s_2$ have been chosen planar, the Smale diffeomorphism that we construct at the end acts on the torus and the dynamics can be identified to the time 1 of the stable gradient flow on the torus. 

\par{Let us move to the general case. We remind that in the previous section we've chosen cycles of bands verifying what we've called conditions 1, 2 and (\ref{equilibrium}) (see lemma \ref{exists equilibrium}). Start from the bands related to the basic piece $s_1$ and glue for every $\omega, \alpha, k$ every band in $S_1^{\omega}\xrightarrow{\alpha}S_k^{\omega}$ (resp. $S_k^{\omega}\xrightarrow{\alpha}S_1^{\omega}$) to a band in $S_1^{\alpha}\xrightarrow{\omega}S_k^{\alpha}$ (resp. $S_k^{\alpha}\xrightarrow{\omega}S_1^{\alpha}$). We'll call this a \emph{band gluing}. It's not difficult to see that if the condition (\ref{equilibrium}) is satisfied, then a band gluing is always possible and groups together all the bands related to $s_1$ inside a finite number of boundary cycles for $s_1$. Every boundary cycle for $s_1$ will represent a boundary component of the domain of $s_1$. If we identify the first and last element of the previous boundary cycles:}
\begin{itemize}
\item A band in $S_1 \rightarrow S_k$ with $k \neq 1$ appears once inside exactly one boundary cycle.
\item A band in $S_1 \rightarrow S_1$ can appear twice inside the same boundary cycle or once in exactly two different boundary cycles (compare figures \ref{cycle2cycle10} and \ref{example2})
\end{itemize}
\par{Next, keeping in mind that we've already glued the bands related to $s_1$, in the exact same way we glue together the bands related to $s_2$ by respecting the previous gluings, therefore we obtain a grouping of all the bands related to $s_2$ in a finite number of boundary cycles. Once again any band in $S_2 \rightarrow S_2$ could appear twice in the same boundary cycle or once in two different boundary cycles. We continue gluing bands in this way for all $s_i$. At the end of this process, we've fixed the behaviour of the boundary of every domain $s_i$ and more specifically the way in which we're going to glue the domains of the $s_i$ together. }
\begin{figure}[h!]
\begin{center}
  \includegraphics[scale=0.4]{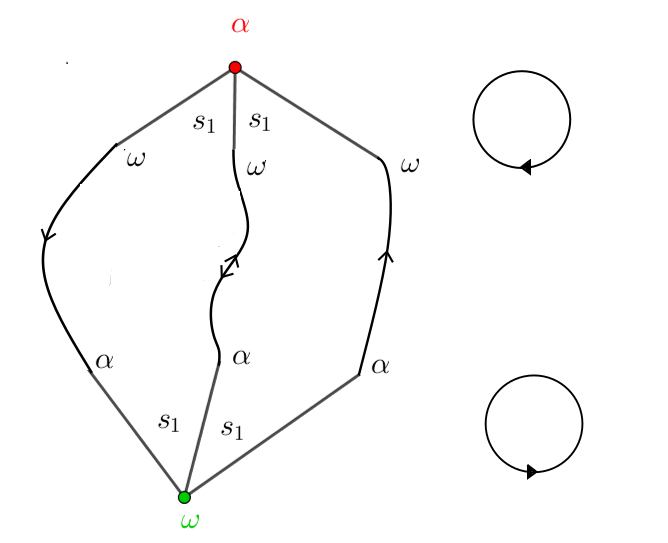}
  \captionof{figure}{}
   \label{example2}
\end{center}
\end{figure}
\subsubsection*{Avoiding obstructions}
\par{The only thing to do now is to construct for every $s_i$ a domain that has the same number of boundary components as the number of boundary cycles corresponding to $s_i$ and whose boundary components have lengths given by the lengths of the respective boundary cycles. The only problem is, that the lemma \ref{creation} constructs domains, whose boundary lengths satisfy certain conditions. Take for instance the boundary cycles defined for $s_1$. If one of them is of length 2, by changing a little bit our initial choice of cycles around the attracting and repelling points, we can apply the method shown in figure \ref{cycle2cycle10} to turn it into a boundary cycle of length 10:  

\begin{figure}[h!]
\begin{center}
  \includegraphics[scale= 0.35]{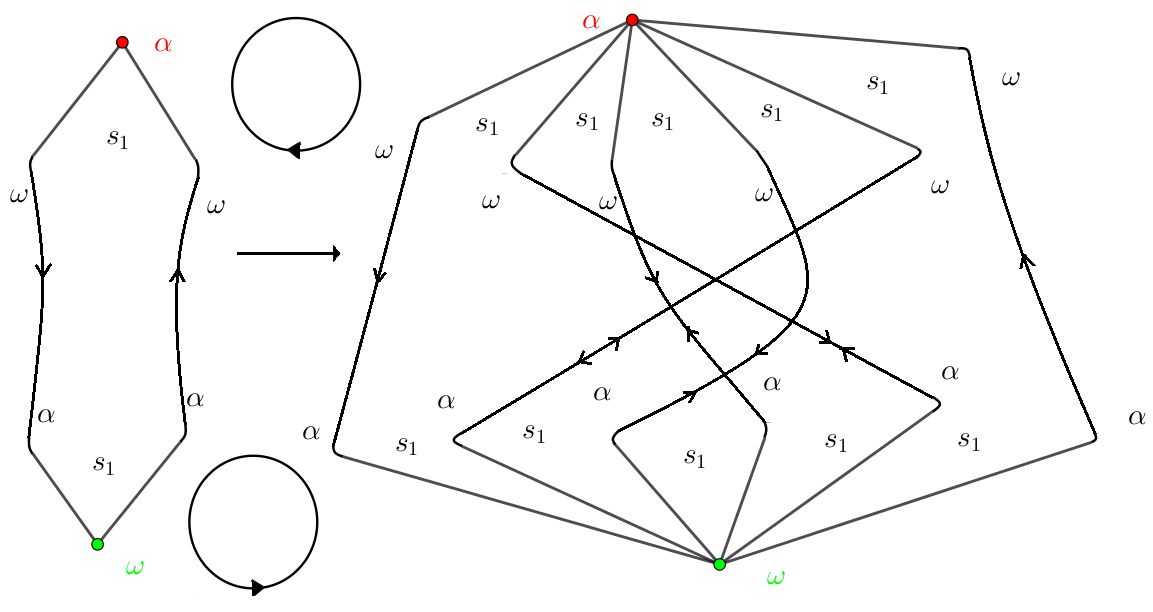}
  \captionof{figure}{}
  \label{cycle2cycle10}
\end{center}
\end{figure}
Next denote the number of boundary cycles for $s_1$ by $s$ and their respective lengths as $n_1,...,n_s$. By our previous trick, we can assume that $n_1,...,n_s \geq 4$. If $(\sum_{i=1}^s n_i) -4s \not\equiv 0 \text{ mod } 8$, then since every boundary cycle has even length we have $(\sum_{i=1}^s n_i) -4s \equiv 2,4 \text{ or }6 \text{ mod } 8$. Again by changing a little bit our initial choice of cycles around the attracting and repelling points as shown in figure \ref{breakcycles}, we can increase the total number of cycles by 1 and their total lengths by 2.

In this way, if $s'$ is the new number of cycles and $n_1',...,n_{s'}'$ their respective lengths we have  $(\sum_{i=1}^{s'} n'_i) -4s' \equiv 0, 2 \text{ or }4 \text{ mod } 8$. If any of those cycles has now length 2, then repeat the trick described before that increases its length by 8. Therefore, after a finite number of repetitions of this trick the condition 1 of lemma \ref{creation} can be satisfied for every domain of every $s_i$. If the condition 2 of the lemma isn't satisfied for some $s_i$, then we apply the same trick.}
\par{ Having satisfied all the hypothesis of the lemma \ref{creation} we can now construct a domain for every $s_i$ with the prescribed number of boundary components and the prescribed lengths. By gluing, all those domains along the bands that we've glued, we obtain a Smale diffeomorphism (by theorem \ref{gluening}) realising the order $(F,<)$ as its Smale order. Indeed, for every attracting point $A$ (corresponding to a minimal point of $F$), every $s$ satisfying $s>A$ has its domain glued to $A$, whereas the domain of every $s' \in F$ not related to $A$ doesn't intersect $A$, by our initial choice of cycles around $A$. Same for every repelling point $B$ and since we've assumed for this part of the proof that there is no relation between saddles, our previous construction wields a realisation of $(F,<)$.}
\begin{figure}[h!]
\begin{center}
\includegraphics[scale=0.2]{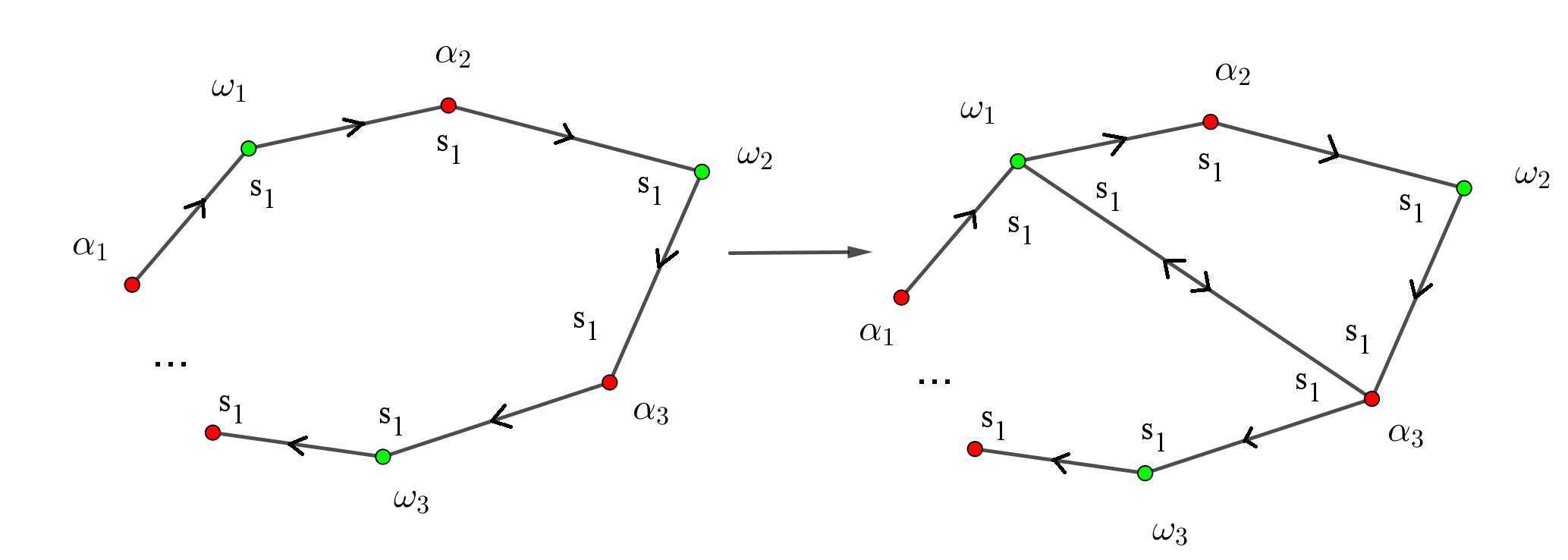}
\caption{}
\label{breakcycles}
\end{center}
\end{figure} 
\subsubsection*{Step 1 implies the general case}
\par{Let us now finish the proof by considering a general order $(F,<)$ satisfying the connectivity condition. Consider now a new order $<_1$ on $F$, by deleting all relations between any two non-maximal and non-minimal elements of $<$. Therefore, any non-maximal and non-minimal element of $F$ is related by $<_1$ to the same minimal or maximal elements as before, but not to any other non-maximal and non-minimal element. It is easy to verify that the new order $<_1$ also satisfies the connectivity condition. Therefore, thanks to the proof of Step 1, we can realise $(F,<_1)$ as a Smale order of some Smale diffeomorphism $f$ acting on a closed surface $S$.} 
\par{Take now $s_1$ and $s_2$ two non-maximal and non-minimal elements of $<$ such that $s_1>s_2$. By our proof of step 1, we can assume $s_1$ and $s_2$ to be two non-trivial hyperbolic sets. Apply an attracting DA map to $s_1$ and a repelling DA map to $s_2$. A new  attracting point has appeared next to $s_1$ and a new repelling point next to $s_2$. Remove a small neighbourhood of each of those points and in this way we obtain a hyperbolic plug (defined in section 3) with one useful entrance boundary and one useful exit boundary. The entry fundamental annulus of this plug consists of two free stable separatrices issued from $s_2$ (see figure \ref{DA map}) and stable manifolds of attracting fixed points. Analogously, its exit fundamental annulus consists of two free unstable separatrices issued from $s_1$ and unstable manifolds of repelling fixed points. By gluing the exit fundamental annulus to the entry fundamental annulus, in a way such that the free separatrices of $s_1$ intersect the free separatrices of $s_2$ transversally, we obtain a new Smale diffeomorphism $\tilde{f}$ acting on a new surface. The Smale order $<_2$ associated to $\tilde{f}$ is the same as $<_1$, only that one relation has been added: $s_1>_2 s_2$.}
\par{We can now repeat this process for every two saddle-type pieces $s$ and $s'$ such that $s>s'$. The only difference now is that since our diffeomorphism $\tilde{f}$ no longer satisfies the hypothesis of Step 1, then the new plugs we'll construct after the DA perturbation, will have more complicated entry and exit laminations. However, a transverse gluing such that the free stable separatrices of $s'$ intersect transversally those of $s$ is still possible by step 3 of our proof of theorem \ref{Smalequestion}. At the end of this process, we get a realisation of $(F,<)$ as the Smale order of a Smale diffeomorphism acting on a closed surface.}
\end{proof}
\section{Variations of Smale's question}
\subsection{What happens if we fix the genus?}
Here's an interesting question generalising the theorem \ref{Stablecase}: Consider $g\in \mathbb{N}$. Which partial orders on finite sets satisfying the connectivity condition can be realised as Smale orders of a Smale diffeomorphism acting on a $g$-torus? By applying the DA map and adding an additional handle between two related basic  pieces, as it was done in the final paragraph of the proof of theorem \ref{Stablecase}, we can increase the genus of the surface on which the Smale order has been realised. Therefore, the previous question boils down to the following one: Given a partial order satisfying the connectivity condition, what is the minimal genus $g$ for which the order can be realised as a Smale order of a Smale diffeomorphism acting on the $g$-torus? 
\par{Let us restrict ourselves to orders for which there is no relation between two saddles and look at our proof of step 1 of theorem \ref{Stablecase}. We first chose cycles around every attracting and repelling point satisfying (\ref{equilibrium}). We then glue bands together in order to partition the boundaries of every domain to a finite number of boundary cycles. At the end, we correct the lengths of the boundary cycles in order for them to be realisable by a domain of a hyperbolic piece (lemma \ref{creation}). Every one of the previous steps, can change the genus of the final surface. Therefore, an efficient answer to the question of the minimal genus for this first case relies on the following problems:} 
\begin{enumerate}
\item (Strengthening lemma \ref{creation}) Given $l_1,l_2,...,l_s \in 2\mathbb{N}^{*}$, does there exist a domain of a basic piece with $s$ boundary components with respective lengths $l_1,l_2,...,l_s$? If yes, what is its minimal genus?
\item Can we find an efficient algorithm producing a choice of cycles of bands around all attracting and repelling points and a choice of boundary cycles giving rise to a band gluing as close as possible to a planar gluing (i.e. the graph whose vertices are attracting and repelling points and whose edges are glued bands is embeddable in a surface of low genus)? 
\end{enumerate}

\subsection{The case of non-trivial attractors}

\par{Why is the case with non-trivial attractors different from what we've done until now? Let us further examine this question. Consider a partial order $(F,<)$ that doesn't satisfy the connectivity condition, where no two saddles are related. Suppose that there exists a maximal element $A$, for which the connectivity condition fails. By theorem \ref{Stablecase}, that means that if the order $<$ is realisable, $A$ corresponds necessarily to a non-trivial repeller. The following theorem is proven in \cite{Bonatti}}
\begin{theorem}[Theorem 2.3.4 of \cite{Bonatti}] \label{nontrivial}
If $f$ is a stable diffeomorphism acting on a closed surface and $A$ a non-trivial hyperbolic repeller for $f$ then if there exist $B,C$ hyperbolic pieces such that for the Smale order of $f$ we have $C<B<A$, then $C$ is a periodic attracting point and $B$ a periodic saddle point. 
\end{theorem}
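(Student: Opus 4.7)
The plan is to exploit the classification of non-trivial hyperbolic attractors in dimension $2$, applied to $A$ viewed as a non-trivial attractor for $f^{-1}$. Such an object has a Plykin--Williams structure: $A$ is a one-dimensional lamination with empty interior, sitting inside an open basin $W^u(A)$, and the complement of $A$ inside this basin consists of finitely many invariant ``holes'', each bounded by unstable separatrices of periodic points lying in $A$, and each containing at its center a periodic source for $f^{-1}$, that is, a periodic attracting point of $f$.

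The first step would be to show that every basic piece $B$ strictly below $A$ has its stable set $W^s(B)$ forced, by the strong transversality condition (theorem \ref{Manetheorem}), to be transverse to $W^u(A)$ and to accumulate on the periodic attracting points that populate the Plykin holes. The argument proceeds by iterating backwards: any compact arc of $W^s(B) \cap W^u(A)$ eventually falls inside an arbitrarily small neighbourhood of one of the holes, because backward dynamics contracts the basin $W^u(A)$ onto $A$, while the only recurrent pieces of the basin that do not sit on $A$ itself are the periodic orbits of $f$ living inside the holes.

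Next I would argue that $B$ cannot be a non-trivial saddle. A non-trivial saddle possesses, by items $2$ and $4$ of theorem \ref{strcutretheoremforhyperbolicsaturatedsets}, infinitely many distinct stable separatrices accumulating on periodic u-boundary points with free stable separatrices. Under the previous step, all of these stable objects would have to be forced into the finitely many Plykin holes of $A$, producing a proper embedding of an infinite family of disjoint $1$-manifolds into a finite union of topological disks pinched at the sinks, which is impossible. Hence $B$ is a periodic saddle. The same reasoning applied one level further shows that the finitely many unstable separatrices of the periodic saddle $B$ must each accumulate on a periodic attracting point inside a hole of the basin of $A$, forcing $C$ to be a trivial attractor.

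The main obstacle is making the Plykin--Williams picture rigorously apply to an arbitrary non-trivial repeller of a stable diffeomorphism on a closed surface --- in particular, establishing that the complement of $A$ inside $W^u(A)$ decomposes into finitely many components each organised around a periodic attracting orbit of $f$, and that the boundary of the basin $W^u(A)$ inside $M$ is itself controlled by finitely many periodic orbits. Once this rigidity is in place, the contradiction that rules out non-trivial $B$, and then non-trivial $C$, becomes a topological dimension count against the finite combinatorics of the Plykin structure.
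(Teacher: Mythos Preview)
The paper does not prove this theorem; it is quoted verbatim from \cite{Bonatti} and used as a black box in Section~6. So there is no proof in the paper to compare your proposal against, and the evaluation below concerns only the internal soundness of your sketch.

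There are two genuine gaps. First, your description of the Plykin--Williams picture places the periodic attracting points of $f$ \emph{inside} the basin $W^u(A)$. This cannot happen: any periodic orbit contained in $W^u(A)$ would converge to $A$ under $f^{-1}$ and hence lie in $A$ itself. The sinks you have in mind sit in the complement $M\setminus W^u(A)$, not in the ``holes'' of $W^u(A)\setminus A$; the latter consists entirely of wandering points. This also makes the dynamical step in your second paragraph inconsistent: you correctly say that backward iteration contracts $W^u(A)$ onto $A$, yet conclude that arcs of $W^s(B)$ are pushed toward the holes --- the opposite of what that contraction gives.

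Second, and more seriously, the ``impossibility'' in your third paragraph is not an impossibility. Items~2 and~4 of Theorem~\ref{strcutretheoremforhyperbolicsaturatedsets} do not produce infinitely many separatrices; item~3 says explicitly that there are only finitely many periodic $s$-boundary points. A non-trivial saddle does carry an uncountable Cantor family of stable leaves, but a Cantor family of disjoint arcs embeds perfectly well in a single disk (think of $\{c\}\times[0,1]$ for $c$ in a Cantor set). So the proposed dimension count does not yield a contradiction, and without it neither the triviality of $B$ nor that of $C$ follows. The actual obstruction in \cite{Bonatti} is finer: it uses that the \emph{unstable} lamination $W^u(A)$ foliates the open basin, so that strong transversality forces every stable leaf of $B$ entering the basin to be everywhere transverse to a genuine foliation, and then analyses the resulting rigidity of the accessible boundary of $W^u(A)$ --- an argument of a different nature from the cardinality count you propose.
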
 
\begin{corollary}
The orders represented in figure \ref{fig} are not realisable as Smale orders of any stable diffeomorphism.

\begin{figure}[h!]
\centering{}
\includegraphics[scale=0.7]{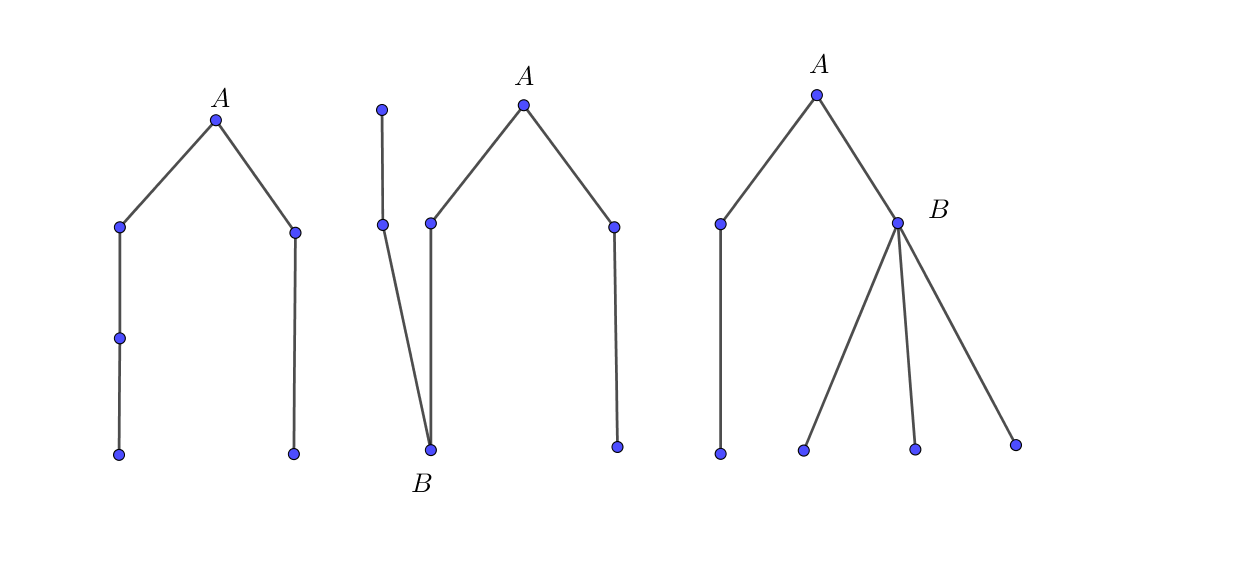}
  \caption{}
  \label{fig}
\end{figure}

\end{corollary}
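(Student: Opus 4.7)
The plan is to combine Theorem \ref{Stablecase} with Theorem \ref{nontrivial}, playing each against the other. For each of the orders $(F,<)$ depicted in figure \ref{fig}, I would first check that the order violates the connectivity condition at some extremal element; by Theorem \ref{Stablecase}, this already rules out any realisation by a stable diffeomorphism whose attractors and repellers are all trivial. So if $(F,<)$ were realisable as a Smale order of \emph{some} stable diffeomorphism $f$, then $f$ would necessarily possess at least one non-trivial attractor or repeller, and (as the paragraph preceding the corollary explains) this non-trivial extremal piece must be located precisely at the element $A$ where the connectivity condition breaks down.

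Next, I would apply Theorem \ref{nontrivial} (in its stated form, or its dual obtained by replacing $f$ with $f^{-1}$, depending on whether $A$ is maximal or minimal). If $A$ is a non-trivial repeller, the theorem forces every chain $C<B<A$ to consist of a periodic saddle $B$ and a periodic attracting point $C$. In other words, the subposet of $F$ strictly below $A$ has height at most $2$, every non-minimal element of this subposet is a trivial saddle, and every minimal element is a trivial attractor. The last step of the argument is then purely combinatorial: inspect each order in figure \ref{fig} and exhibit a configuration in the subposet below (or above) the offending extremal element that is incompatible with this rigid structure, for instance a chain of length three, or an intermediate element that lies below a second non-extremal element, or an element below $A$ that is itself minimal for the connectivity obstruction on a still lower level.

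The step that carries the real content is the verification that the connectivity failure in figure \ref{fig} cannot be absorbed by making several different extremal elements non-trivial at once: one has to argue that in each displayed diagram \emph{every} natural candidate for a non-trivial extremal piece drags along, via Theorem \ref{nontrivial}, a forbidden chain of saddle-type relations below it. Once this case analysis is performed, the two cited theorems directly produce the desired contradiction, and no realisation by a stable diffeomorphism can exist.
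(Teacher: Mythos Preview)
Your plan is exactly the paper's: the failure of the connectivity condition at $A$ forces $A$ to be a non-trivial repeller (this uses the pointwise form of the necessity argument behind Theorem \ref{Stablecase}, not just its global statement), and then Theorem \ref{nontrivial} constrains everything strictly below $A$ to be periodic, after which a short case analysis on each diagram yields a contradiction.

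One obstruction your list does not anticipate, and which the paper uses for the rightmost diagram, is not poset-theoretic but geometric: once Theorem \ref{nontrivial} forces an intermediate element $B<A$ to be a \emph{periodic} saddle, $B$ has only two unstable separatrices and therefore can dominate at most two distinct attractors. In that diagram $B$ sits above three minimal elements, and this is what produces the contradiction---not a chain of excessive length nor a secondary connectivity failure. You should add ``a trivial saddle with three or more children (or parents)'' to your inventory of forbidden configurations; with that addition your outline matches the paper's proof.
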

\begin{proof}
Consider the leftmost order. The point $A$ doesn't verify the connectivity condition, therefore if the order is realisable, it will be represented by a non-trivial repeller. But this contradicts directly, theorem \ref{nontrivial}. Concerning the middle order, by the same argument $A$ must be a non-trivial repeller and by theorem \ref{nontrivial}, $B$ should be an attracting periodic point. But, $B$ doesn't satisfy the connectivity condition either so it should be a non-trivial attractor, which is absurd. Finally, concerning the rightmost order, again $A$ is a non-trivial repeller and by theorem \ref{nontrivial} $B$ is a periodic point. But periodic points have two unstable separatrices, so $B$ cannot be related to three distinct attractors. 
\end{proof}
\par{Let us return to our initial question. If $(F,<)$ is realisable, no saddles in $(F,<)$ are related and there exists a maximal element $A$ that doesn't satisfy the connectivity condition, then by the above theorem all the elements in $F$ smaller than $A$ correspond to periodic points. That means two things:
\begin{enumerate}
\item The non-minimal elements smaller than $A$ have two stable separatrices and two unstable separatrices. Therefore, they're related to at most two minimal elements and two maximal elements.
\item The minimal elements smaller than $A$ must verify the connectivity condition.
\end{enumerate}
The conditions 1, 2 give us additional topological restrictions that we need to satisfy, contrary to the case of trivial attractors and repellers. If a periodic saddle point $p$ arises by condition 1, then all its smaller elements must satisfy the connectivity condition by condition 2. If $p$ is bigger only than one element, then the cycle around this minimal element contains twice the domain of $p$. If not, there exist two minimal elements whose cycles contain just once the domain of $p$. That is the major difference with the case of Smale diffeomorphisms and this is also the reason why it's not always possible to construct cycles satisfying (\ref{equilibrium}) around every attracting and repelling point. To convince you of this, we'll construct a non-realisable order satisfying the above conditions 1, 2.}
\begin{proposition} \label{counterexample} A partial order on a finite set for which no two non-minimal and non-maximal elements are related and which satisfies the conditions 1, 2 isn't necessarily realisable as a Smale order of a stable diffeomorphism.
\end{proposition}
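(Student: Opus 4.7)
The plan is to exhibit an explicit partial order $(F,<)$ satisfying the hypotheses of the proposition and to show that any putative realization would force the balance condition (\ref{equilibrium}) to fail. I propose the following candidate: one non-trivial repeller $A$, one periodic repeller $R$, three periodic saddles $p_1,p_2,p_3$, and two periodic attractors $\omega_1,\omega_2$, with relations $A>p_1,p_2$; $R>p_1,p_2,p_3$; $p_1>\omega_1$; $p_2>\omega_2$; $p_3>\omega_1,\omega_2$; together with the separatrix distribution dictated by the valencies (both unstable separatrices of $p_1$ reaching $\omega_1$, both of $p_2$ reaching $\omega_2$, those of $p_3$ split between $\omega_1$ and $\omega_2$; the stable separatrices of $p_1,p_2$ distributed one each to $A$ and $R$, and both stable separatrices of $p_3$ going to $R$). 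An immediate inspection confirms that $A$ is the only element failing connectivity (its descendant subgraph splits as $\{p_1,\omega_1\}\sqcup\{p_2,\omega_2\}$), that $R$ and each $\omega_i$ satisfy connectivity, and that conditions 1 and 2 of the preceding discussion hold.

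Suppose for contradiction that $(F,<)$ is realized by a stable diffeomorphism $f$. By theorem \ref{nontrivial}, $p_1,p_2,\omega_1,\omega_2$ are periodic, and I would arrange auxiliary structure if necessary so that $p_3$ too is periodic. Each saddle then has exactly two stable and two unstable separatrices, fixing its multiplicity in every cycle. Around $R$, the multiplicities $p_1(\times 1)$, $p_2(\times 1)$, $p_3(\times 2)$ and the fact that $p_1,p_2$ share no attractor (hence cannot be adjacent) determine the cycle uniquely up to rotation as $p_1\xrightarrow{\omega_1}p_3\xrightarrow{\omega_2}p_2\xrightarrow{\omega_2}p_3\xrightarrow{\omega_1}p_1$. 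Transporting these counts through (\ref{equilibrium}) to $\omega_1,\omega_2$ pins down every $R$-labeled band at those cycles, and the remaining self-transitions $p_i\xrightarrow{?}p_i$ cannot be labeled $R$: that would demand a self-transition $p_i\xrightarrow{\omega_i}p_i$ at $R$, which is absent since $p_i$ appears only once there. Hence both self-transitions must be labeled by $A$, the only other available repeller.

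The contradiction is then extracted from the non-trivial repeller $A$: each band $p_i\xrightarrow{A}p_i$ at $\omega_i$ corresponds to a translation rectangle from $A$ to $\omega_i$ bordered on both sides by the two unstable separatrices of $p_i$. I would analyze the boundary combinatorics of $A$'s filtrating plug using the DA-type local picture of section 2.3 together with theorem \ref{unglue}: the plug's boundary is a finite union of oriented circles, each carrying a rigidly determined pattern of translation bands to attractors that is encoded by $A$'s descendant subgraph. The two forced self-pairings of $p_1$ and of $p_2$ require $A$'s basin to pair $\{p_1,\omega_1\}$ and $\{p_2,\omega_2\}$ by twin bands organized on a common boundary structure, which cannot coexist with the disconnection of the descendant subgraph of $A$ into these very two components. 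This is the combinatorial contradiction, so no realization exists.

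The main obstacle I foresee is formalizing the combinatorial analogue of (\ref{equilibrium}) at the non-trivial repeller $A$: unlike periodic repellers, $A$ carries no traditional cycle of domains, so the necessary bookkeeping must be recovered from a DA-style decomposition of its filtrating plug. The key claim to establish is that the boundary circles of this plug, together with their attached translation bands, encode only information already present in the descendant subgraph of $A$, so that $A$ cannot couple $p_1$ and $p_2$ to themselves without also connecting $\{p_1,\omega_1\}$ and $\{p_2,\omega_2\}$ in that subgraph. Once this planarity-type argument is made rigorous, the non-realisability of the proposed order, and hence the proposition, follows immediately.
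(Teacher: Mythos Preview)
Your approach diverges from the paper's in a way that leaves a genuine gap. The paper does \emph{not} attempt to extract any contradiction at a non-trivial attractor or repeller; instead it builds a larger example with four periodic extremal points $a_1,\omega_2,a_2,\omega_1$ linked in a loop by periodic saddles, and shows that the choice of cyclic direction (``clockwise vs.\ counter-clockwise'' through the red vertices) at $a_1$ is forced to propagate around this loop and return with the opposite sign. The non-trivial repellers enter only to pin certain saddles as periodic via theorem~\ref{nontrivial}; the contradiction itself lives entirely at periodic sinks and sources, where the cycle formalism and the balance condition~(\ref{equilibrium}) are rigorously available.

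Your argument, by contrast, places the decisive step at the non-trivial repeller $A$, and you correctly flag this as the ``main obstacle''. The difficulty is not merely technical: around a non-trivial repeller there is no cycle of domains, no analogue of condition~(\ref{equilibrium}), and the basin $W^u(A)$ is an open set whose boundary can meet the surface in several unrelated places. Nothing prevents $A$ from sending one ``tongue'' toward $\omega_1$ between the two unstable separatrices of $p_1$ and an entirely separate tongue toward $\omega_2$ between those of $p_2$; the disconnection of the descendant subgraph of $A$ is precisely what allows this. In fact your order appears to be realisable: with $p_3$ taken non-trivial (nothing in the order forces it to be periodic, so your uniqueness claim for the cycle at $R$ already fails), one can choose cycles $p_1\xrightarrow{A}p_1\xrightarrow{R}p_3\xrightarrow{R}p_1$ at $\omega_1$, the symmetric cycle at $\omega_2$, the length-four cycle you wrote at $R$, and then insert a suitable non-trivial repeller for $A$ by a plug gluing. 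So the example is likely not a counterexample at all, and the ``planarity-type argument'' you hope for cannot be completed. To prove the proposition you need an example whose obstruction can be read off at periodic extremal points, which is exactly what the paper's four-point monodromy construction achieves.
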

\begin{proof}
\par{Let us represent for every minimal point in $F$ the set of all its bigger elements by a graph, whose vertices are non-maximal points and whose edges are maximal points. Two vertices $s_1,s_2$ are connected by an edge $a$ if $a>s_1,s_2$. This graph is in the general case a graph with multiple edges, loops and edges that connect more than 2 vertices. We can define the same graph for maximal elements. We'll denote on these graphs a maximal or minimal element of $F$ that doesn't satisfy the connectivity condition as an edge with a circle symbol on it and periodic saddle points as red points. Consider the set of graphs defined in figure \ref{graphs}. 
\begin{figure}[h!]
\begin{center}
\includegraphics[scale=0.4]{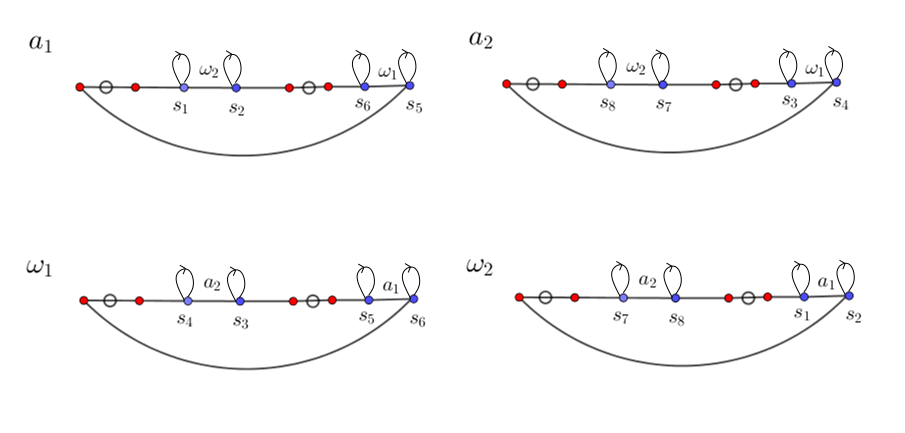}
\caption{}
\label{graphs}
\end{center}
\end{figure}
}
\par{Note that the vertices connected to an edge marked with a circle are all periodic points, which is necessary by theorem \ref{nontrivial}. We'll assume that in the above graphs all the edges and vertices that don't have a name are distinct and disjoint from the set $\lbrace\omega_1,\omega_2,a_1,a_2,s_1,s_2,...,s_8\rbrace$. We'll also assume that all periodic points (red points) are related to exactly two minimal and two maximal elements. It is not difficult to construct a partial order $(F,<)$ for which the saddles are only related to maximal and minimal elements, that satisfies the conditions 1 and 2, such that the graphs associated to the elements $a_1,a_2,\omega_1,\omega_2$ are exactly the graphs drawn in figure \ref{graphs}. Such an order cannot be realised as a Smale order. }
\par{Indeed, choosing a cycle (defined in section 5) around the repelling point $a_1$ is equivalent to choosing a cycle in its graph using only once every red vertex. This is because by hypotheses all periodic points are related to two minimal and two maximal elements, therefore only one separatrix of any periodic point can intersect the basin of $a_1$. Suppose that we choose a cycle in the graph of $a_1$ that visits the red points in the clockwise direction. This implies that $\#(s^{a_1}_1 \xrightarrow{\omega_2} s^{a_1}_2) = \#(s^{a_1}_2 \xrightarrow{\omega_2} s^{a_1}_1) + 1$. Remember now that in the previous section, a necessary condition for the realisability of an order, that made it possible for us to glue bands together, was that for every $i,j,k,l$ we have $\#(s^{\omega_i}_l \xrightarrow{a_j} s^{\omega_i}_k) = \#(s^{a_j}_l \xrightarrow{\omega_i} s^{a_j}_k)$. In order for us to ensure this, we need to choose the cycle around $\omega_2$ accordingly and more specifically, we need to choose a cycle visiting the red points in the clockwise direction. For the same reason, we need to choose a cycle around $a_2$ visiting the red points in the counter-clockwise direction, a cycle around $\omega_1$ that visits red points in the clockwise direction and finally a cycle around $a_1$ that visits red points in the counter-clockwise direction. This is impossible from our initial hypothesis. Therefore, any order $(F,<)$ satisfying the above properties cannot be realised as a Smale order.}
\end{proof} 
\subsection{The case of Morse-Smale diffeomorphisms}
The case of Morse-Smale diffeomorphisms is of course the case with the most topological restrictions: every basic piece is a periodic point. The example described in proposition \ref{counterexample} can be adapted in the Morse-Smale case. Therefore, in order for a partial order on a finite set $(F,<)$ to be realised by a Morse-Smale diffeomorphism the order must satisfy the connectivity condition, the non-maximal and non-minimal elements of $F$ must have at most two stable and two unstable separatrices and finally one should also take measures to avoid examples like the one we've proposed in proposition \ref{counterexample}.
\par{Let us consider the simplest case scenario: the case of gradient-like diffeomorphisms with fixed points. The problem of verifying whether a partial order can be realised by a gradient-like diffeomorphism, is actually closely related to a beautiful problem of simultaneous graph embedding. Consider a partial order $(F,<)$ whose associated graph (defined in the proof of \ref{Smalequestion}) has only first generation saddles with two stable and two unstable separatrices, attractors and repellers. This kind of order is not necessarily realisable by a gradient-like diffeomorphism, but it is not difficult to show the following:}
\begin{theorem}
Any embedded graph in a surface whose complement consists of a finite union of open disks determines uniquely a gradient flow on this surface. The vertices of the graph will correspond to repelling points, each edge to the union of a saddle with its two stable separatrices and finally a disk of its complement will correspond to a basin of attraction of a sink. 
\end{theorem}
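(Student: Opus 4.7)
The plan is to turn the combinatorial picture directly into a Morse function and take (minus) its gradient for any Riemannian metric; the embedded graph $G$ will then \emph{be} the skeleton of the flow. First, I would place the critical points: a source at every vertex $v$ of $G$, a saddle $p_e$ at the midpoint of every edge $e$, and a sink in the interior of every complementary disk $D$. Around each such point I would fix a standard local model: at a source, the flow is radial repelling; at a saddle $p_e$, the local stable manifold lies along $e$ and the local unstable manifold crosses $e$ transversely, with one separatrix pointing into each of the two faces adjacent to $e$; at a sink, the flow is radial attracting. The edges of $G$, minus the saddle midpoints, are thereby decomposed into the stable separatrices of the saddles joining $p_e$ to the two endpoints of $e$.

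Next I would extend these local pieces across each closed face. A closed face $\overline{D}$ is a topological disk whose boundary walk alternates sources (vertices) and saddles (edge midpoints); the prescribed local models already determine the flow on $\partial \overline{D}$ and in neighbourhoods of the critical points on $\partial \overline{D}$. Inside $D$ I would construct a smooth Morse function $h_D\colon \overline{D}\to\mathbb{R}$ having a single local minimum at the chosen sink, agreeing with the local models at the boundary critical points, and with no interior critical points besides the sink; such a function is easily built by interpolation, since a disk retracts onto its boundary and the prescribed radial models at the sink and at the boundary critical points are compatible. Gluing the $h_D$ across the edges of $G$ (where the local saddle model is common to the two adjacent faces) yields a global smooth Morse function $h\colon S\to\mathbb{R}$. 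The negative gradient of $h$ for any Riemannian metric on $S$ is then a gradient-like flow whose sources, saddles, and sinks are precisely the prescribed ones, and whose saddle connections are forbidden by construction because stable separatrices of saddles lie in $G$ while unstable separatrices lie in the open faces.

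For uniqueness I would argue at the level of topological equivalence of gradient-like flows: the embedded graph $G$, together with the face structure, is exactly the union of the sources and of the closures of the stable manifolds of the saddles, which is a complete topological invariant of a gradient-like Morse–Smale flow in dimension two (this is the classical Peixoto-type classification). Any two flows obtained by the above construction have the same $G$ and the same face decomposition, hence the same stable and unstable partitions; a standard cell-by-cell extension, using the fact that two topological flows on a disk with one interior sink and the prescribed boundary behaviour are conjugate via a homeomorphism of the disk fixing the boundary pattern, produces a global conjugating homeomorphism.

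The main obstacle I expect is purely technical smoothness at the boundary critical points of each face. The prescribed local radial/saddle models at sources and saddles leave little freedom, and one must check that the interpolated Morse function $h_D$ on each face can be chosen so that the tangential derivatives match to all orders along $\partial \overline{D}$; this is handled by using bump-function cut-offs away from the critical points and by working in the smooth category on closed disks, which is standard but slightly fiddly. The uniqueness step is conceptually easy once one grants the Peixoto-type classification, but writing down the conjugating homeomorphism face by face with matching boundary behaviour also requires care.
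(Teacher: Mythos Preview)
The paper does not actually give a proof of this statement: it is introduced with ``it is not difficult to show the following'' and then used without argument to derive the two corollaries that follow. So there is no proof in the paper to compare your proposal against.

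Your approach is the standard one and is correct in outline: this is the CW/handle decomposition $\leftrightarrow$ Morse function correspondence, with sources at $0$-cells, saddles on $1$-cells, and sinks in $2$-cells, and the uniqueness part is exactly the Peixoto-type classification of gradient-like Morse--Smale flows on surfaces by their separatrix skeleton. One small slip: ``the negative gradient of $h$ for \emph{any} Riemannian metric'' is too generous. For the edges of $G$ to be precisely the stable separatrices of your saddles, the gradient vector field must be tangent to $G$ away from the critical points; for a generic metric this fails and the separatrices will only be $C^0$-close to $G$. You need either to choose the metric (e.g.\ a product metric in a tubular neighbourhood of $G$) or, equivalently, to build $h$ so that each edge is orthogonal to the level sets of $h$ along it. This is easy to arrange and you essentially anticipate it in your last paragraph, but it is worth saying explicitly rather than folding it into ``any metric''. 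Since uniqueness is only claimed up to topological equivalence, none of this affects the final statement.
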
 
We'll call the \emph{graph of the two highest levels} of $(F,<)$ the graph whose vertices are the maximal elements of $F$ and whose edges correspond to the first generation saddles of $F$. We draw an edge between two distinct vertices $a,b$ if there exists a saddle $s<a,b$. We also draw a loop around a vertex $a$ if there exists a saddle $s$ such that $a$ is the unique element in $F$ bigger than $s$. In the same way we can define the \emph{graph of the two lowest levels} of $(F,<)$. Every graph with loops is embeddable in a surface in a way such that its complement consists of a finite union of disks. We'll call this type of embedding a \emph{good embedding}. In this way, we prove the two following corollaries:
\begin{corollary}
There exists a surface $S_g$ and a gradient-like diffeomorphism with fixed points acting on $S_g$, whose Smale order $(G,<_1)$ contains the exact same relations between saddles and sources as in $(F,<)$. In other words, the graphs of the two highest levels of $(G,<_1)$ and $(F,<)$ are isomorphic. 
\end{corollary}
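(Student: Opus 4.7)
The plan is to reduce the corollary directly to the theorem preceding it: a cellular embedding of a graph in a closed surface canonically produces a gradient-like diffeomorphism whose vertices, edges, and complementary disks play the roles of sources, saddles, and sinks. So our task reduces to constructing a good embedding of the graph of the two highest levels of $(F,<)$ into some closed surface.

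Concretely, let $\Gamma$ be the graph of the two highest levels of $(F,<)$: vertices are the maximal elements of $F$, and each first-generation saddle $s$ contributes either an edge between the two distinct maximal elements above it or a loop based at the unique maximal element above it. This is well-defined because by hypothesis every first-generation saddle has exactly two unstable separatrices, hence lies below at most two maximal elements (counted with multiplicity). The graph $\Gamma$ is finite and may have loops, multi-edges, and several connected components.

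Next I invoke the classical fact from topological graph theory that every finite connected graph (with loops and multiple edges allowed) admits a cellular embedding into some closed orientable surface, i.e.\ an embedding whose complement is a disjoint union of open disks; indeed one may even choose the genus freely above a certain minimum by adding handles that split a face. For a disconnected graph, one embeds each connected component cellularly in its own closed surface and takes the disjoint union, possibly taking connected sums if a connected ambient surface is preferred (this does not affect the fact that complementary regions are disks, up to adjusting the embedding slightly when forming the sum). This yields a good embedding of $\Gamma$ in some closed orientable surface $S_g$.

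Now apply the theorem stated just above the corollary: the good embedding $\Gamma \hookrightarrow S_g$ determines a gradient-like diffeomorphism $f$ whose fixed-point structure is: one repeller for each vertex of $\Gamma$, one saddle for each edge of $\Gamma$ (with its two stable separatrices going into the endpoints of the edge), and one sink for each complementary disk. Define $(G,<_1)$ to be the Smale order of $f$, with the obvious identification of the maximal elements of $G$ with the vertices of $\Gamma$ (hence with the maximal elements of $F$) and of the first-generation saddles of $G$ with the edges of $\Gamma$ (hence with the first-generation saddles of $F$). By construction a saddle of $G$ lies below a source of $G$ in $<_1$ exactly when the corresponding edge of $\Gamma$ is incident (as an edge or a loop) with the corresponding vertex, which by definition of $\Gamma$ is exactly the relation $<$ between that saddle and that source in $F$. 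Hence the graphs of the two highest levels of $(G,<_1)$ and $(F,<)$ are isomorphic.

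The only non-formal step is the existence of a good embedding for an arbitrary finite graph with loops and multi-edges; this is standard (rotation-system construction, or equivalently the existence of a $2$-cell embedding into the surface of maximum genus), so no real obstacle arises, and the rest of the argument is a direct application of the preceding theorem.
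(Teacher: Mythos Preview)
Your proposal is correct and follows exactly the route the paper intends: the paper states the theorem on cellular embeddings producing gradient flows, then remarks that every graph with loops admits a good embedding, and declares the corollary as an immediate consequence---precisely the two steps you carry out in detail. One small slip: when you justify that a first-generation saddle lies below at most two maximal elements, it is the two \emph{stable} separatrices (which limit backward onto sources) that govern this, not the unstable ones; the conclusion is unaffected since the saddles are assumed to have two of each.
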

\begin{corollary}
The partial order $(F,<)$ is realisable by a gradient-like diffeomorphism with fixed points if and only if the graphs of its two highest levels and its two lowest levels are embeddable in the same surface in a way such that the image of one is the dual graph of the other.
\end{corollary}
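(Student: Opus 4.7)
\subsection*{Proof proposal}

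The plan is to use the preceding theorem (good embeddings of graphs in surfaces are in bijection with gradient flows) to reduce the question to a statement about dual graph embeddings, and then read off both directions of the equivalence from the combinatorial data.

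For the forward direction, suppose that $(F,<)$ is realised by a gradient-like diffeomorphism $f$ on a closed surface $S$. First I would build two embedded graphs in $S$ directly from the dynamics:
\begin{itemize}
\item $H_{top}$: take the sources of $f$ as vertices, and for each saddle $s$ take the closed arc $\{s\} \cup W^s(s)$, whose two endpoints are (possibly equal) sources since $s$ has exactly two stable separatrices. This gives a topological graph embedded in $S$ whose complement is the disjoint union of the open basins of the sinks; each such basin is an open disk because $f$ is gradient-like. Hence $H_{top}$ is a good embedding, and by construction it is isomorphic to the graph of the two highest levels of $(F,<)$.
\item $H_{bot}$: dually, take the sinks as vertices and the closed arcs $\{s\} \cup W^u(s)$ as edges. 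The same argument shows this is a good embedding isomorphic to the graph of the two lowest levels.
\end{itemize}
I would then check that $H_{top}$ and $H_{bot}$ are dual embeddings in $S$: each edge of $H_{top}$ crosses exactly one edge of $H_{bot}$ (transversally, at the corresponding saddle), each face of $H_{top}$ (basin of a sink) contains exactly one vertex of $H_{bot}$ (the sink itself), and symmetrically for $H_{bot}$.

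For the converse, assume both graphs are realised by good embeddings in the same surface $S$, dual to each other. Apply the preceding theorem to the embedded graph of the two highest levels to produce a gradient flow $\phi_t$ on $S$. By that theorem, vertices become sources, edges become saddles (with their stable separatrices lying along the edge), and the open disk faces of the complement become basins of sinks. Because the second graph is the dual of the first, its vertices stand in canonical bijection with those faces, so the sinks of $\phi_t$ are in bijection with the minimal elements of $F$, and its edges with the saddles of $F$ in a way compatible with both the ``source above'' and ``sink below'' relations. Taking the time-one map of $\phi_t$ (or a small perturbation making each separatrix behave linearly near its endpoints) yields a gradient-like diffeomorphism whose Smale order is exactly $(F,<)$.

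The main step to verify carefully is the bookkeeping at loops and multi-edges: a saddle lying above a single source corresponds to a loop of $H_{top}$ at that vertex, and dually a saddle lying below a single sink corresponds to a loop of $H_{bot}$; one must check that the face incidence under the duality still matches the combinatorial definition of the graph of two lowest levels in these degenerate cases, and that the two separatrices of such a saddle land in the two correct faces. Once this and the basic fact that basins of sinks of a gradient-like diffeomorphism on a surface are open disks are in hand, both implications follow immediately from the cited theorem on graph embeddings and gradient flows.
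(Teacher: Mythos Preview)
Your proposal is correct and is exactly the argument the paper has in mind: the corollary is stated without proof, as an immediate consequence of the preceding theorem identifying good graph embeddings with gradient flows, and your two directions simply unpack that identification together with the standard duality between the graph of sources/stable-separatrices and the graph of sinks/unstable-separatrices. The only point you rightly flag for care---that the edge-by-edge bijection between the two graphs (both indexed by saddles) must be respected by the duality, including at loops and multi-edges---is indeed the content hidden in the phrase ``the image of one is the dual graph of the other'', and once that is read correctly both implications are immediate.
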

We now can understand that verifying whether a partial order on a finite set is realisable by a gradient-like diffeomorphism with fixed points is actually related to a non-trivial graph problem. Finding a good embedding for the graph of the highest two levels is a rather easy task, but the number of its non topologically equivalent embeddings is so big, that we haven't been able to think of an efficient algorithm checking the necessary and sufficient condition of the previous corollary.  
\subsection{Open questions}
\par{To conclude, let us mention a series of open questions that arise from our previous discussions:
\begin{enumerate}
\item Given a partial order satisfying the connectivity condition. What is the minimal $g\in \mathbb{N}$ for which the order can be realised as a Smale order of a Smale diffeomorphism acting on the $g$-torus? Can we find a lower bound for it?
\item Does there exist an algorithm of sub-exponential time complexity verifying whether a partial order is realisable by a gradient-like diffeomorphism in dimension 2?
\item Which are the Smale orders associated to Morse-Smale diffeomorphisms acting on a closed 3-manifold?
\item Are all Smale orders associated to a structurally stable diffeomorphism acting on a closed 3-manifold? 
\end{enumerate}   

\end{document}